\newtheorem{theorem}{Theorem}[section]
\newtheorem{prop}[theorem]{Proposition}
\theoremstyle{remark}
\newtheorem{remark}[theorem]{Remark}
\theoremstyle{definition}
\newtheorem{defn}[theorem]{Definition}
\newtheorem{example}[theorem]{Example}
\numberwithin{equation}{section}
\numberwithin{theorem}{section}
\newcommand{\sgn}{{\rm sgn}}
\newcommand{\balex}{{\mathcal B}_c(\Rbar)}
\newcommand{\balextwo}{{\mathcal B}_c(\Rbar^2)}
\newcommand{\balexn}{{\mathcal B}_c(\Rbar^n)}
\newcommand{\alex}{{\mathcal A}_c(\R)}
\newcommand{\alextwo}{{\mathcal A}_c(\R^2)}
\newcommand{\alexn}{{\mathcal A}_c(\R^n)}
\newcommand{\supp}{{\rm supp}}
\newcommand{\Rbar}{\overline{\R}}
\newcommand{\D}{{\mathcal D}(\R)}
\newcommand{\Dtwo}{{\mathcal D}(\R^2)}
\newcommand{\Dp}{{\mathcal D}'(\R)}
\newcommand{\Dptwo}{{\mathcal D}'(\R^2)}
\newcommand{\Dpn}{{\mathcal D}'(\R^n)}
\newcommand{\dalex}{\partial_{12}}
\newcommand{\bv}{\mathcal{BV}}
\newcommand{\hkbv}{\mathcal{HKBV}}
\newcommand{\intinf}{\int^\infty_{-\infty}}
\newcommand{\intxy}{\int_{-\infty}^x\int_{-\infty}^y}
\newcommand{\intabcd}{\int_a^b\int_c^d}
\newcommand{\Lone}{L^1(\R^2)}
\newcommand{\Q}{{\mathbb Q}}
\newcommand{\N}{{\mathbb N}}
\newcommand{\R}{{\mathbb R}}
\newcommand{\C}{{\mathbb C}}
\newcommand{\fn}{\!:\!}
\newcommand{\nbv}{\mathcal{NBV}}
\providecommand{\abs}[1]{\lvert#1\rvert}
\providecommand{\norm}[1]{\lVert#1\rVert}
\begin{document}
\subjclass{Primary  26A39, 46F10.
Secondary 46B42}

\keywords{continuous primitive integral, 
Henstock--Kurzweil integral,
Schwartz distribution, generalised function, Alexiewicz norm, 
Banach space, integration by parts, H\"older inequality, dual space,
Banach lattice, Banach algebra, convergence theorem, Hardy--Krause
variation, convolution}
\date{Preprint April 4, 2020.  To appear in {\it Real Analysis Exchange}.}
\title[Continuous primitive integral]
{The continuous primitive integral in the plane}
\author{Erik Talvila}
\address{Department of Mathematics \& Statistics\\
University of the Fraser Valley\\
Abbotsford, BC Canada V2S 7M8}
\email{Erik.Talvila@ufv.ca}

\begin{abstract}
An integral is defined on the plane that includes the Henstock--Kurzweil and
Lebesgue integrals (with respect to Lebesgue measure).  A space of primitives
is taken as the set of continuous real-valued functions $F(x,y)$ defined on 
the extended real plane $[-\infty,\infty]^2$ that vanish when $x$ or $y$ is $-\infty$.  
With usual pointwise operations
this is a Banach space under the uniform norm.  The integrable functions and
distributions (generalised functions) are those that are the distributional
derivative $\partial^2/(\partial x\partial y)$ of this space of primitives.  If
$f=\partial^2/(\partial x\partial y) F$ then the integral over interval $[a,b]\times [c,d]
\subseteq[-\infty,\infty]^2$ is $\int_a^b\int_c^d f=F(a,c)+F(b,d)-F(a,d)-F(b,c)$ and
$\int_{-\infty}^\infty \int_{-\infty}^\infty f=F(\infty,\infty)$.
The definition then builds in the fundamental theorem of calculus.  The Alexiewicz
norm is ${\lVert f\rVert}={\lVert F\rVert}_\infty$ where $F$ is the unique primitive of $f$.  The
space of integrable distributions is then a separable Banach space isometrically 
isomorphic to the space of primitives.  The space of integrable distributions is
the completion of both $L^1$ and the space of Henstock--Kurzweil
integrable functions.  The Banach lattice and Banach algebra
structures of the continuous functions in ${\lVert \cdot\rVert}_\infty$ are also inherited
by the integrable distributions.  It is shown that the dual space are the functions
of bounded Hardy--Krause variation.  Various tools that make these integrals useful
in applications are proved: integration by parts, H\"older inequality, second mean value theorem, Fubini
theorem, a convergence theorem, change of variables, convolution.  The changes necessary to define the integral
in ${\mathbb R}^n$ are sketched out.
\end{abstract}

\maketitle

\section{Introduction}\label{sectionintroduction}
The continuous primitive integral is discussed in $\R^2$ and then briefly in $\R^n$.
This is an integral defined by taking primitives (indefinite integrals) as continuous
functions.  It includes the Lebesgue and Henstock--Kurzweil integrals.  The essential
idea is to take a Banach space ${\mathcal B}$ of primitives and define the entities that can be
integrated as the distributional derivative of each item in ${\mathcal B}$.  Here ${\mathcal B}$
is taken as the continuous functions on the extended real plane.  Each such function is
differentiated with the partial differential operator $\dalex=\partial^2/(\partial y\partial x)$.  
This automatically makes
the distributions integrable in this sense into a Banach space isometrically isomorphic
to the continuous functions under the uniform norm.  

The same process can be repeated with
other classes of primitives.  There is the regulated primitive integral \cite{talvilaregulated}.
A function on the real line is regulated if it has a left and right limit at each point, or
from within each orthant in $\R^n$.  There is the $L^p$ primitive integral \cite{talvilaLp}.
And there are higher order distributional integrals for which each continuous function is
differentiated multiple times \cite{talvilaacrn}.

The name, continuous primitive integral, was introduced at the end of \cite{talviladenjoy}.
Some authors refer to the same integral as the distributional Henstock--Kurzweil
or distributional Denjoy integral.  As there are several integrals defined by their primitives,
as above, we prefer the name continuous primitive integral.

First define the primitives.
The extended real line is $\Rbar=[-\infty,\infty]$.  A function $F\fn \Rbar\to\R$ is continuous on
$\Rbar$
if it equals its limit at each point, $F(x)=\lim_{t\to x}F(t)$, where the limit is necessarily
one-sided if $x=\infty$ or $-\infty$.  The extended real plane is $\Rbar^2$ endowed with
the product topology.  We then take as a space of primitives $\balextwo$ which consists of
the continuous functions $F(x,y)$ on $\Rbar^2$ that vanish when $x=-\infty$ or $y=-\infty$.  
Under the uniform norm $\balextwo$ is a Banach space.  A distribution (generalised function), $f$,
has a continuous primitive integral if there is a function $F\in\balextwo$ such that $f=\dalex F$,
the partial derivative being understood in the distributional sense.  Since $F(x,y)=0$ if $x$ or $y$
is $-\infty$, the primitive is unique.  If $(x,y)\in\Rbar^2$
then the integral is $\int_{-\infty}^x\int_{-\infty}^y f=F(x,y)$, with a similar definition on
compact intervals.  In this way the definition builds in the fundamental theorem of calculus.

The Alexiewicz norm of $f$ is 
$$
\norm{f}=\sup_{(x,y)\in\Rbar^2}\left|\int_{-\infty}^x\int_{-\infty}^y f\right|=\norm{F}_\infty. 
$$
Write the set of integrable distributions as $\alextwo$.  Then $\alextwo$
is a Banach space that is isometrically
isomorphic to $\balextwo$.  Since the Lebesgue and Henstock--Kurzweil integrals have continuous
primitives they form dense subspaces of $\alextwo$ but neither is complete in this norm.
The continuous primitive integral then provides the completion with respect to the Alexiewicz norm
of the space of Henstock--Kurzweil integrable
functions.  The Henstock--Kurzweil integral allows conditional convergence and so does the continuous
primitive integral.

The Henstock--Kurzweil integral is a well-established integration process based on Riemann sums
that includes the
Lebesgue and improper Riemann integrals in $\R^n$ (with respect to Lebesgue measure).  For early
results see \cite{jarnikkurzweil}, \cite{mawhin1981}, \cite{pfeffer} and \cite{kurzweilmawhinpfeffer}.  It
is discussed on the real line and briefly in $\R^2$ or $\R^n$ in the monographs \cite{mawhin}, 
\cite{mcleod}, \cite{swartz} and
\cite{yee}.
A detailed treatment of the Henstock--Kurzweil integral on compact intervals in $\R^n$ is
given in \cite{bongiornopap} and \cite{lee}, where there is also an extensive review of the literature.
See also \cite{kurzweil}.
The Denjoy integral is equivalent to the Henstock--Kurzweil integral and is defined 
via properties of the primitive.
See \cite{celidze}.

Under the usual pointwise operations,
$\balextwo$ is a Banach lattice and Banach algebra; and $\alextwo$ inherits these properties.

The simple structure of $\balextwo$ makes it easy to prove various results in $\alextwo$.  
The corresponding space of primitives
for the Lebesgue integral are the absolutely continuous functions.  There are many different
notions of absolute continuity for functions of two variables, due to Tonelli and
other authors.
If $f\in L^1(\R^2)$ and $F(x,y)=\int_{-\infty}^x\int_{-\infty}^y f$
then $F$ is absolutely continuous in the sense of Carath\'{e}odory.  See \cite{sremr} for the
definition and references to Carath\'{e}odory's original work. The primitives for the 
Henstock--Kurzweil integral in $\Rbar^2$
are much more complicated than $\balextwo$.  See \cite{celidze}.  The primitives for Lebesgue
and Henstock--Kurzweil integrals are continuous and the pointwise derivative $\dalex$ exists
almost everywhere.  Being merely continuous, primitives in $\balextwo$ need not have a pointwise
derivative anywhere but the distributional derivative $\balextwo$ is well-defined. See following
Definition~\ref{defnalex}.

There are many different
notions of bounded variation for functions of two variables
(\cite{clarksonadams}, \cite{adamsclarkson},
\cite{adamsclarksoncorrection}).  If $g$ is of bounded Hardy--Krause variation then the product
$fg$ is in $\alextwo$ for all $f\in\alextwo$ and we can prove an integration by parts formula and
H\"older inequality.  Functions of bounded Hardy--Krause variation also form the dual space of
$\alextwo$.

The paper is laid out as follows.

Section~\ref{sectiondistributions} gives the necessary background in distributions.

Functions on the extended real plane are discussed in Section~\ref{sectionRbar2}.

In Section~\ref{sectioncontinuousprimitiveintegral} the continuous primitive integral is
defined on intervals in $\Rbar^2$ and various basic properties, such as linearity and
the fundamental theorem of calculus, are proved.  
It is shown that $\alextwo$ is a separable
Banach space isometrically isomorphic to the space of primitives $\balextwo$.  The test
functions, the real analytic functions, $L^1$ and the Henstock--Kurzweil integrable functions
are all shown to be dense in $\alextwo$.  It is shown that the integral can be defined as
the limit of a sequence of Lebesgue integrals.

Various examples are given in Section~\ref{sectionexamples}.  We have already noted above
that the continuous primitive integral includes the Lebesgue and Henstock--Kurzweil integrals.
If $F\in\balextwo$ and $f=\dalex F$ then
an example of note is  the case when the primitive $F$ has a pointwise derivative
$\dalex F$ nowhere.
Then $\intabcd f$ is well-defined in $\alextwo$ but
the Lebesgue integral of $f$ does not exist.  Also, if $\dalex F=0$ almost everywhere then the
Lebesgue integral of $f$ is $0$ over every interval but the continuous primitive integral
gives the value we would expect from the fundamental theorem of calculus.
In this section we also discuss other compactifications of $\R^2$.

Functions of Hardy--Krause bounded variation are defined in Section~\ref{sectionhkbv} and
some examples are given.

In Section~\ref{sectionparts} it is shown that the functions of Hardy--Krause bounded 
variation form
the multipliers and allow us to prove an integration by parts formula in terms of Henstock--Stieltjes
integrals.  This leads to versions of the first and second mean value theorems for integrals.
It is shown that $\alextwo$ is invariant under translations and that translations are 
continuous in the Alexiewicz norm.

A type of H\"older inequality is proved in Section~\ref{sectionholder}.  This gives the
inequality $\abs{\intinf\intinf fg}\leq \norm{f}\norm{g}_{bv}$ for $f\in\alextwo$ and $g$
of Hardy--Krause bounded variation.  Some norms equivalent to $\norm{\cdot}$ are introduced.
It is shown that the dual space of $\alextwo$ is the space of functions of 
Hardy--Krause bounded variation.

A convergence theorem is given in Section~\ref{sectionconvergence} for taking the limit
under integrals $\intinf\intinf fg_n$ where $f\in\alextwo$ and $g_n$ is a sequence of
functions of bounded Hardy--Krause variation.

If $f\in\alextwo$ then, in general, the only subsets $f$ is integrable on are finite
unions of intervals in $\Rbar^2$.  Hence, a change of variables theorem can only map
intervals to finite unions of intervals.  In Section~\ref{sectionchange} a change of
variables theorem is given where each variable $(x,y)$ is transformed to a linear
combination of just one variable.

In Section~\ref{sectionbanachlattice} a partial ordering is introduced on $\alextwo$ that
makes this into a Banach lattice isomorphic to $\balextwo$ under the usual pointwise ordering.
Both $\balextwo$ and $\alextwo$ are abstract $M$-spaces.

In Section~\ref{sectionbanachalgebra} the pointwise algebra structure on $\balextwo$,
defined as usual by $(FG)(x,y)=F(x,y)G(x,y)$, is extended
to $\alextwo$ so that it becomes a
Banach algebra, without a unit but with an approximate identity, isomorphic to $\balextwo$.  

A sufficient condition for changing the order of iterated integrals is given in
Section~\ref{sectioniteratedintegrals}.  Some examples are given for which iterated integrals
are not equal.  Examples of this type can be resolved by showing the primitive is not
continuous on the closure of the interval of integration, although it may be continuous
on the interior of the interval of integration.

Convolutions $f\ast g$ are defined in Section~\ref{sectionconvolution} for $f\in\alextwo$
and $g$ of Hardy--Krause bounded variation.  These behave similarly to convolutions when
$f\in L^1$ and $g\in L^\infty$.  Convolutions are also defined for $g\in L^1(\R^2)$ and
these behave similarly to convolutions when $f,g\in L^1$.

Finally, some of the changes needed to define the integral in $\Rbar^n$ are sketched out
in Section~\ref{sectionRbarn}.

The notion of using continuous functions for primitives appears to have first been 
considered by K.~Ostaszewski in \cite{ostaszewski1}.  Then the definition of
the integral was sketched out in the setting of compact intervals 
in $\R^n$ by P.~Mikusinski and K.~Ostaszewski
in \cite{pmikusinski1988} and \cite{pmikusinski1990}.  In the context of the real line 
it was also mentioned briefly by
B.~Bongiorno \cite{bongiornomathstudy}; B.~Bongiorno and T.V. Panchapagesan \cite{bongiornopanchapagesan};
B. B\"aumer, G. Lumer and F. Neubrander \cite{baumerlumerneubrander}.  
It was studied in more detail on compact intervals in $\R^2$ by D.D.~Ang, K.~Schmidt and L.K.~Vy in \cite{ang} 
(with some results repeated in \cite{angvy})
and (on the real line) by E.~Talvila \cite{talviladenjoy}.

The integral was applied to Fourier series \cite{talvilafourierseries} and a type of
Salem--Zygmund--Rudin--Cohen factorization was proved there.  See also \cite{moralesarredondo}.

Various other properties were studied in 
\cite{escamilla},
\cite{gutierrez},
\cite{gutierrez2}, 
\cite{talvilaconv}.

A number of our results are generalisations of similar results proved for the Henstock--Kurzweil
integral in
\cite{lee}.

\section{Distributions}\label{sectiondistributions}
Here we briefly describe notation and a few of the major properties of distributions that we 
will use.
All of the results in distributions we use can be found in
\cite{folland} and \cite{friedlanderjoshi}.

The support of a function $\phi\fn\R^2\to\R$ is the closure of the set on which it does not vanish,
denoted $\supp(\phi)$.
The test functions are $\Dtwo=C_c^\infty(\R^2)=\{\phi\fn\R^2\to\R\mid \phi\in C^\infty(\R^2) \text{ with
compact support}\}$.  Note that $\Dtwo$ is a linear space closed under differentiation.  
If $\{\phi_n\}$ is a sequence of functions in $\Dtwo$ and $\phi\in\Dtwo$ then
$\phi_n\to\phi$ if there is a compact set $K\subset\R^2$ such that for each $n\in\N$ we have
$\supp(\phi_n)\subseteq K$ and for all integers $k,\ell\geq 0$ we have 
$\norm{\partial_1^k\partial_2^\ell\phi_n-\partial_1^k\partial_2^\ell\phi}_\infty
\to 0$ as $n\to\infty$, i.e., all partial derivatives converge uniformly to $\phi$.
The symbol $\partial_i$ represents the partial derivative with respect to the $i$th Cartesian variable.

The distributions are the continuous linear functionals on $\Dtwo$.  This is the dual space of
$\Dtwo$, written $\Dptwo$.  For $T\in\Dptwo$ its action on test function $\phi$ is written
as $\langle T,\phi\rangle\in\R$.  Distributions are linear:   
$\langle T, a\phi+b\psi\rangle =a\langle T,\phi\rangle +b
\langle T,\psi\rangle$ for all $\phi,\psi\in\Dtwo$ and all $a,b\in\R$.  Distributions are continuous: 
if $\phi_n\to\phi$ in $\Dtwo$ then $\langle T,\phi_n\rangle\to
\langle T,\phi\rangle$ in $\R$.  To define distributions on an open set $\Omega\subset\R^2$ we use test
functions with compact support in $\Omega$.

All distributions have derivatives of all orders and all such derivatives are distributions.
For each $i=1,2$ the derivative of $T\in\Dptwo$ is $\langle \partial_i T,\phi\rangle
=-\langle T,\partial_i\phi\rangle$ for each $\phi\in\Dtwo$.
Write $\dalex=\partial_{1}\partial_2$.  Then $\langle \dalex T,\phi\rangle=\langle T,\dalex\phi\rangle$.
All Cartesian derivative operators commute on test functions and distributions.

\section{Extended real plane}\label{sectionRbar2}
The extended real line is $\Rbar=[-\infty,\infty]$.  It is a compact topological space with
a topological base given by usual open intervals in $\R$ together with intervals $[-\infty,a)$,
$(a,\infty]$ for all $a\in\R$.  This is then a two-point compactification of $\R$.  A
function $F\fn\Rbar\to\R$ is continuous at $x\in\R$ if $\lim_{y\to x}F(y)=F(x)$, continuous at
$-\infty$ if $\lim_{y\to -\infty}F(y)=F(x)$, continuous at
$\infty$ if $\lim_{y\to \infty}F(y)=F(x)$.  The last two limits are necessarily one-sided.
For example, the function $\arctan$ is continuous on $\Rbar$ if we define $\arctan(\pm\infty)
=\pm\pi/2$ and no definition at $\pm\infty$ can make the functions $\sin$ or $\exp$ continuous
on $\Rbar$.

The extended real plane is $\Rbar^2$ and has the product topology.  It is then a compact Hausdorff
space.  The continuous functions on $\Rbar^2$ are denoted $C(\Rbar^2)$.  Note that they are real-valued.  We define 
\begin{defn}\label{defnbalex}
\begin{eqnarray*}
\balex & = & \{F\fn\Rbar\to\R\mid
F \text{ is continuous on } \Rbar, F(-\infty)= 0\}\\
\balextwo & = & \{F\fn\Rbar^2\to\R\mid \\
 & & \quad F \text{ is continuous on } \Rbar^2, F(-\infty,s)=
F(s,-\infty)=0 \text{ for all } s\in \Rbar\}.
\end{eqnarray*}
\end{defn}
Hence, a function $F\in\balextwo$ is continuous at $(x,y)\in\R^2$ if for each $\epsilon>0$ there is
$\delta>0$ such that if $(x-\xi)^2 + (y-\eta)^2< \delta^2$ then $\abs{F(x,y)-F(\xi,\eta)}<\epsilon$.
If $x\in\R$ then $F$ is continuous at $(x,\infty)\in\Rbar^2$ if when $\abs{x-\xi}<\delta$ and
$\eta>1/\delta$ we have $\abs{F(x,y)-F(\xi,\eta)}<\epsilon$.  Similarly for other points in 
$\Rbar^2\setminus \R^2$.  Continuity in $\Rbar^2$ implies uniform continuity in $\R^2$ but uniform continuity
in $\R^2$ does not imply continuity or boundedness in $\Rbar^2$.
With the uniform norm, $\norm{\cdot}_\infty$, $\balextwo$ is a Banach space.
Note that if $F\in C(\Rbar^2)$ then
$$
\norm{F}_\infty=\sup_{(x,y)\in\Rbar^2}\abs{F(x,y)}=\sup_{(x,y)\in\R^2}\abs{F(x,y)}=\max_{(x,y)\in\Rbar^2}\abs{F(x,y)}.
$$

\section{The continuous primitive integral}\label{sectioncontinuousprimitiveintegral}
We can now define the integrable distributions as the derivatives of functions in $\balextwo$.
Parts of Propositions~\ref{propunique}, \ref{propseparable}, \ref{propdense} were proved
for compact intervals in \cite{ang}.
\begin{defn}\label{defnalex}
$$
\alextwo=\{f\in\Dptwo\mid f=\dalex F \text{ for some } F\in\balextwo\}.
$$
\end{defn}
In this definition the function $F$ is called the primitive of $f$.  If $f\in\alextwo$ has primitive
$F\in\balextwo$ then the action of $f$ on test function $\phi$ is 
$\langle f,\phi\rangle=\langle F, \dalex\phi\rangle=\intinf\intinf F(x,y)\dalex\phi(x,y)\,dy\,dx$.  Since
$\phi$ is smooth with compact support this last integral exists in the Riemann sense.

If $F$ is a function in $\balextwo$ then $F(x,y)$ vanishes
when $x$ or $y$ is $-\infty$.   Primitives are then unique.  The derivative
operator $\dalex$ is a linear isomorphism between $\alextwo$ and $\balextwo$.   We define its inverse to be the
integral and then $\alextwo$ inherits the Banach space structure of $\balextwo$.
\begin{prop}\label{propunique}
(a) If $f\in\alextwo$ then it has a unique primitive in $\balextwo$.
(b) If $f\in\alextwo$ with primitive $F\in\balextwo$ then define the Alexiewicz norm of $f$ by
$\norm{f}=\norm{F}_\infty$.  Then $\alextwo$ is a Banach space.  The derivative $\dalex$ provides
a linear isometry and isomorphism between 
$\balextwo$ and $\alextwo$.
(c) If $G$ is continuous on $\Rbar^2$ then $\dalex G\in\alextwo$.
(d) For all $f,g\in\alextwo$; $c_1,c_2\in\R$; $\phi\in\Dtwo$ we have
$\langle c_1f+c_2g,\phi\rangle=c_1\langle f,\phi\rangle +c_2\langle g,\phi
\rangle$.
\end{prop}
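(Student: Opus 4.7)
The plan reduces the four assertions to the structure of $\balextwo$ and standard distribution-theoretic facts, in the order (a), (b), (c), (d). For (a), suppose $\dalex F_1 = \dalex F_2 = f$ with $F_1, F_2 \in \balextwo$. Put $H = F_1 - F_2$; then $H \in C(\Rbar^2)$, vanishes on $\{-\infty\}\times\Rbar$ and $\Rbar\times\{-\infty\}$, and satisfies $\dalex H = 0$ in $\Dptwo$. I would invoke the standard result that a distribution on $\R^2$ annihilated by $\partial_1$ is pulled back from a distribution on the $y$-axis, i.e.\ is of the form $S(y)$, and apply it twice: $\partial_1(\partial_2 H) = 0$ gives $\partial_2 H = S(y)$, and then picking any one-variable antiderivative $T(y)$ of $S$ and applying the same result to $\partial_2(H - T) = 0$ yields $H(x,y) = A(x) + B(y)$ distributionally. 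Because $H$ is continuous on $\R^2$, one can choose $A$ and $B$ continuous (for instance by fixing $y_0$ and setting $A(x)=H(x,y_0)$, $B(y) = H(x_0,y)-H(x_0,y_0)$), and they extend continuously to $\Rbar$ since $H$ does. The boundary conditions $A(-\infty) + B(y) = 0$ and $A(x) + B(-\infty) = 0$ force $A$ and $B$ to be constants summing to zero, giving $H \equiv 0$.

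For (b), once uniqueness in (a) is in hand, $\dalex \fn \balextwo \to \alextwo$ is a bijection (surjectivity is Definition~\ref{defnalex}), and the prescription $\norm{f} := \norm{F}_\infty$ turns it into a linear isometric isomorphism by construction. Since $\balextwo$ is already a Banach space under $\norm{\cdot}_\infty$, the Banach space structure transports to $\alextwo$ without further work. For (c), given $G \in C(\Rbar^2)$, I would exhibit the primitive
$$
F(x,y) = G(x,y) - G(x,-\infty) - G(-\infty,y) + G(-\infty,-\infty),
$$
which is continuous on $\Rbar^2$ and satisfies $F(-\infty, y) = F(x,-\infty) = 0$, so $F\in\balextwo$; the three subtracted terms depend on at most one variable and are therefore killed by $\dalex$, giving $\dalex F = \dalex G$ and hence $\dalex G \in \alextwo$.

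For (d), writing $f = \dalex F$ and $g = \dalex G$, linearity of $\dalex$ on $\balextwo$ gives $c_1 f + c_2 g = \dalex(c_1 F + c_2 G)$, and the defining identity $\langle \dalex H, \phi\rangle = \langle H, \dalex \phi\rangle$ together with linearity of the Riemann integral $\intinf\intinf H(x,y)\,\dalex\phi(x,y)\,dy\,dx$ closes the calculation. The main obstacle is the kernel step in (a): rigorously deducing the additive decomposition $H = A(x) + B(y)$ from $\dalex H = 0$ distributionally and then passing to continuous summands that extend to $\Rbar$, so that the boundary conditions at $-\infty$ can be applied coordinatewise. Everything else amounts to routine transport of structure along the isomorphism $\dalex$.
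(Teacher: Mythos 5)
Your proposal is correct and follows essentially the same route as the paper: uniqueness via the kernel decomposition $H=A(x)+B(y)$ plus the vanishing boundary conditions at $-\infty$, transport of the Banach space structure along the bijection $\dalex$, and the explicit primitive $F(x,y)=G(x,y)-G(x,-\infty)-G(-\infty,y)+G(-\infty,-\infty)$ for part (c). The only difference is that you spell out the distributional kernel lemma (two applications of the fact that $\partial_i u=0$ forces $u$ to depend only on the other variable) which the paper simply asserts.
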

The Alexiewicz norm first appears in \cite{alexiewicz}.  
\begin{proof}
(a) Suppose $f\in\alextwo$ and $f=\dalex F=\dalex G$ for $F,G\in\balextwo$.  Let $\Phi=F-G$.
Then $\Phi\in\balextwo$ and $\dalex \Phi=0$.  But then $\Phi(x,y)=\Theta(x)+\Psi(y)$ for
some functions $\Theta,\Psi\in C(\Rbar)$.  Fixing $x\in\R$ and letting $y\to-\infty$ and
then fixing $y\in\R$ and letting $x\to-\infty$ shows
$\Theta$ and $\Psi$ are constant functions with sum $0$.
(b) The derivative operator $\dalex$ is linear.  By (a) it is one-to-one on $\balextwo$.  By
definition it is onto $\alextwo$.  The definition of $\norm{\cdot}$ makes it into an isometry.
(c) If $G\in C(\Rbar^2)$ define $\Theta,\Psi\in C(\Rbar)$ by $\Theta(x)=G(x,-\infty)$ and
$\Psi(y)=G(-\infty,y)$.  Define
$F\in \balextwo$ by $F(x,y)=G(x,y)+G(-\infty,-\infty)-\Theta(x)-\Psi(y)$.  Then $\dalex G=\dalex F$.
(d) The derivative $\dalex$ is linear.
\end{proof}

We can now define the integral of a distribution in $\alextwo$.
\begin{defn}\label{defnctsprimitiveintegral}
Let $f\in\alextwo$ with primitive $F\in\balextwo$.  We define its continuous primitive integral on 
interval $I=[a,b]\times [c,d]\subseteq\Rbar^2$ by
$\int_I f=\int_{a}^b\int_{c}^d f=F(a,c)+F(b,d)-F(a,d)-F(b,c)$.
\end{defn}
If $a=b$ or $c=d$ then the integral of $f$ over $I$ is zero.  This shows the integral over any
line parallel to the $x$ or $y$ axis is zero.  Hence, the integral over the boundary of an
interval always vanishes and there is no distinction between integrating over open or closed
intervals.  We also have the usual convention
that $\int_{b}^a\int_{c}^d f =-\int_{a}^b\int_{c}^d f=-\int_{a}^b\int_{d}^c f$.  
Note that $\int_{\Rbar^2}f=F(\infty,\infty)$ and $\int_{-\infty}^x\int_{-\infty}^y f=F(x,y)$ for all
$(x,y)\in\Rbar^2$.  As well, $\int_I(c_1f+c_2g)=c_1\int_I f+c_2\int_I g$.

The definition builds in the fundamental theorem of calculus.

\begin{prop}[Fundamental theorem of calculus]\label{propftc}
(a) Let $f\in\alextwo$ and define $\Phi(x,y)=\intxy f$.  Then $\Phi\in\balextwo$ and $\dalex\Phi=f$.
(b) Let $G\in C(\Rbar^2)$.  Then $\intxy\dalex G=G(-\infty,-\infty)+G(x,y)-G(-\infty,y)-G(x,-\infty)$.
\end{prop}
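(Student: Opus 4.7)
The plan is to reduce both parts to the definition of the continuous primitive integral (Definition~\ref{defnctsprimitiveintegral}) together with the uniqueness of primitives established in Proposition~\ref{propunique}. Nothing deep should be required: once we identify the primitive in $\balextwo$ associated to the integrand, the formulas are obtained by simply inserting $(-\infty,-\infty)$, $(-\infty,y)$, $(x,-\infty)$, and $(x,y)$ into that primitive and using the vanishing condition that characterises $\balextwo$. There is really no single ``hard step''; the main thing to be careful about is bookkeeping the four corner values and applying $F(-\infty,s)=F(s,-\infty)=0$ at the correct moments.

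For part (a), I would start by invoking Definition~\ref{defnalex} to obtain the unique $F\in\balextwo$ with $f=\dalex F$ (uniqueness is Proposition~\ref{propunique}(a)). Applied to the interval $[-\infty,x]\times[-\infty,y]$, Definition~\ref{defnctsprimitiveintegral} gives
\[
\Phi(x,y)=\intxy f = F(-\infty,-\infty)+F(x,y)-F(-\infty,y)-F(x,-\infty).
\]
Because $F\in\balextwo$, the three terms containing $-\infty$ all vanish, so $\Phi(x,y)=F(x,y)$ for every $(x,y)\in\Rbar^2$. Hence $\Phi=F$, which immediately gives $\Phi\in\balextwo$ and $\dalex\Phi=\dalex F=f$.

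For part (b), I would use the construction already given in the proof of Proposition~\ref{propunique}(c): for $G\in C(\Rbar^2)$, set
\[
F(x,y)=G(x,y)+G(-\infty,-\infty)-G(x,-\infty)-G(-\infty,y).
\]
A direct check (evaluating at $x=-\infty$ and at $y=-\infty$) shows $F\in\balextwo$, and since the two subtracted terms depend on a single variable each, $\dalex F=\dalex G$. Thus $F$ is the unique primitive of $\dalex G$ in $\balextwo$, and applying part (a) yields
\[
\intxy \dalex G =F(x,y)=G(x,y)+G(-\infty,-\infty)-G(x,-\infty)-G(-\infty,y),
\]
which is the claimed identity. The only subtlety worth flagging is that, although $G$ itself need not vanish on the lines at $-\infty$, the combination on the right does, which is exactly what forces $F$ (and not $G$) to be the true primitive in $\balextwo$.
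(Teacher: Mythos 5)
Your proposal is correct and follows essentially the same route as the paper, whose proof simply cites Proposition~\ref{propunique} parts (a) and (c); you have merely written out explicitly the identification $\Phi=F$ via the vanishing of $F$ on the lines at $-\infty$ and the construction $F(x,y)=G(x,y)+G(-\infty,-\infty)-G(x,-\infty)-G(-\infty,y)$ that the paper uses there. No gaps.
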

\begin{proof}
(a) See Proposition~\ref{propunique} (a).
(b) See Proposition~\ref{propunique} (c).
\end{proof}

The space $\balextwo$ is separable and hence $\alextwo$ is as well.
\begin{prop}\label{propseparable}
(a) Step functions are dense in $\balextwo$.
(b) Both $\balextwo$ and $\alextwo$ are separable.
(c) The real analytic functions are dense in $\balextwo$ and $\alextwo$.
(d) If $f\fn\R^2\to\R$ is a function in $L^1(\R^2)$ (with respect to Lebesgue measure),
or integrable in the sense of 
Henstock--Kurzweil or as a Denjoy integral then $f\in\alextwo$ and the integrals agree on
intervals in $\Rbar^2$.
\end{prop}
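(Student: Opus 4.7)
The four parts share a common strategy: transfer statements from $\alextwo$ to $\balextwo$ via the isometric isomorphism $\dalex$ of Proposition~\ref{propunique}, and exploit the fact that $\Rbar^2$ is a compact metric space homeomorphic, via $h(x) = (2/\pi)\arctan x$ on each factor, to $[-1,1]^2$. For part (a), any $F \in \balextwo$ is uniformly continuous on $\Rbar^2$. Given $\epsilon > 0$ I would choose a partition $-\infty = a_0 < a_1 < \cdots < a_N = \infty$ of $\Rbar$ fine enough that $F$ oscillates by less than $\epsilon$ on each cell $[a_i, a_{i+1}] \times [a_j, a_{j+1}]$, then define the step function $S$ to equal the value of $F$ at a chosen corner of each cell. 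Uniform continuity gives $\norm{F - S}_\infty < \epsilon$. For part (b), restricting the partition points to a countable dense subset of $\Rbar$ and the step values to $\Q$ yields a countable dense family in $\balextwo$; $\alextwo$ is then separable as its isometric image.

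For part (c), I would pull $F$ back through $h$: write $F(x, y) = G(h(x), h(y))$ with $G \in C([-1,1]^2)$ satisfying $G(-1, v) = G(u, -1) = 0$. By the Stone--Weierstrass theorem, choose polynomials $P_n(u, v)$ with $\norm{P_n - G}_\infty \to 0$. The technical point is that $P_n$ need not vanish on the required boundary, which I would fix by setting
$$
Q_n(u, v) = P_n(u, v) - P_n(-1, v) - P_n(u, -1) + P_n(-1, -1).
$$
This $Q_n$ is polynomial, satisfies $Q_n(-1, v) = Q_n(u, -1) = 0$, and converges uniformly to $G$ because the subtracted terms converge uniformly to the vanishing boundary traces of $G$. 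Then $F_n(x, y) = Q_n(h(x), h(y))$ lies in $\balextwo$, is real analytic on $\R^2$ as a polynomial in the real analytic functions $\arctan x$ and $\arctan y$, and $\norm{F_n - F}_\infty \to 0$. Density in $\alextwo$ follows by applying $\dalex$, observing that the distributional derivative $\dalex F_n$ coincides with the classical mixed partial, which is itself real analytic on $\R^2$.

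For part (d), given $f \in L^1(\R^2)$ set $F(x, y) = \int_{-\infty}^x \int_{-\infty}^y f$ in the Lebesgue sense. Dominated convergence yields continuity of $F$ on all of $\Rbar^2$ together with the required vanishing on the $-\infty$ axes, so $F \in \balextwo$. For any $\phi \in \Dtwo$, Fubini and two integrations by parts give $\langle \dalex F, \phi\rangle = \langle F, \dalex \phi\rangle = \intinf \intinf f \phi$, so $f = \dalex F$ distributionally and hence $f \in \alextwo$; agreement of the integrals on rectangles is then Definition~\ref{defnctsprimitiveintegral}. For $f$ Henstock--Kurzweil integrable (in the sense forcing the primitive into $\balextwo$) the same recipe works once one invokes the appropriate Fubini and integration-by-parts results for HK integrals; the Denjoy case reduces to HK by the equivalence cited from Celidze.

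The main obstacle is the analytic approximation in part (c): one needs simultaneously to approximate uniformly and to preserve the defining boundary condition of $\balextwo$, and the corner-subtraction identity accomplishes precisely this. Parts (a), (b), (d) are largely bookkeeping once the isometry with $\balextwo$ is in place, with the only remaining subtlety, in (d), being the justification of Fubini and integration by parts for HK integrable $f$ that may not lie in $L^1$.
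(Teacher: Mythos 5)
Your proposal is correct. Parts (a), (b) and (d) run essentially as in the paper: uniform continuity from compactness of $\Rbar^2$ gives the step-function approximation, rational data gives separability, and (d) rests on the standard facts about primitives of Lebesgue/Henstock--Kurzweil/Denjoy integrable functions (the paper is just as brief as you are about the HK and Denjoy cases, citing \v{C}elidze for the statement that the pointwise a.e.\ derivative of such a primitive agrees with the distributional derivative; your Fubini-plus-parts computation is spelled out only for $L^1$, which matches the paper's level of detail).

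Part (c) is where you genuinely diverge. The paper proves (b) and (c) together by convolving the step function $\sigma$ with the half-space Poisson kernel $\Phi_z$, checking by hand that $u_z=\sigma\ast\Phi_z$ is harmonic (hence real analytic), extends continuously to $\Rbar^2$ with the right boundary behaviour, and converges uniformly to $\sigma$ as $z\downarrow 0$ by splitting the integral near and away from the discontinuities of $\sigma$. You instead transport $F$ to $C([-1,1]^2)$ via $\arctan$, apply Stone--Weierstrass, and restore the vanishing on the $\{-1\}$ edges with the corner subtraction $Q_n(u,v)=P_n(u,v)-P_n(-1,v)-P_n(u,-1)+P_n(-1,-1)$; since $G$ vanishes on those edges, $\norm{Q_n-G}_\infty\le 4\norm{P_n-G}_\infty$, so the correction costs nothing. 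This is shorter and avoids harmonic function theory entirely, and the corner-subtraction identity is exactly the right device for staying inside $\balextwo$. What the paper's heavier construction buys is reuse: the same $u_z$ is invoked in the proof that $L^1(\R^2)$ is dense in $\alextwo$ (via the bound $\norm{\dalex u_z}_1\le 4\sum_{i,j}\abs{\sigma_{ij}}$), and it transfers verbatim to $\Rbar^n$ with the $n$-dimensional Poisson kernel. Your construction could in fact serve the same later purpose, since $\dalex\bigl[Q_n(h(x),h(y))\bigr]=Q_{n,uv}(h(x),h(y))\,h'(x)h'(y)$ with $h'\in L^1(\R)$, so these derivatives also lie in $L^1(\R^2)$, but you would need to note this explicitly if you wanted your route to replace the paper's throughout.
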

\begin{proof}
(a) For each $n\in\N$ we can make a partition of $\Rbar$ by $-\infty=p_0<p_1<\ldots<p_n=\infty$ and hence
of $\Rbar^2$ using $(p_i,p_j)$.  Let $P_{ij}=(p_{i-1},p_i]\times(p_{j-1},p_j]$.  Let $\sigma_{ij}\in\R$.
A step function is 
$$
\sigma(x,y)=\sum_{i=1}^n\sum_{j=1}^n\sigma_{ij}\chi_{P_{ij}}(x,y)
$$
with $\sigma(-\infty,y)=\sigma(x,-\infty)=0$ for $x,y\in\Rbar$.  If $p_i$ and $\sigma_{ij}$ are taken in $\Q$
then the collection of all such step functions is countable.    Since $\Rbar^2$ is compact, given $\epsilon>0$
and $F\in\balextwo$ there is a step function $\sigma$ with $\sigma_{1j}=\sigma_{i1}=0$ and 
$\norm{F-\sigma}_\infty<\epsilon$.

(b) The half-space Poisson kernel is $\Phi_z(x,y)=z(x^2+y^2+z^2)^{-3/2}/(2\pi)$ where $z>0$.  For example, see
\cite{axler}.  Note that $\intinf\intinf\Phi_z(x,y)\,dy\,dx=1$. 
For a step function $\sigma$ as above, define
\begin{eqnarray}
u_z(x,y) & = & \sigma\ast\Phi_z(x,y)=\intinf\intinf\sigma(x-\xi,y-\eta)\Phi_z(\xi,\eta)\,d\eta\,d\xi
\label{poisson}\\
 & = & \Phi_z\ast\sigma(x,y)=\intinf\intinf\sigma(\xi,\eta)\Phi_z(x-\xi,y-\eta)\,d\eta\,d\xi.
\label{poisson2}
\end{eqnarray}
In \eqref{poisson} we can have $(x,y)\in\Rbar^2$; since $\sigma(x,y)$ has limits with one of $x$ and $y$ fixed
in $\R$ and the other going to $\infty$ or $-\infty$ we define 
$$
u_z(\infty,y)= \sigma\ast\Phi_z(\infty,y)=\intinf\intinf\sigma(\infty,y-\eta)\Phi_z(\xi,\eta)\,d\eta\,d\xi.
$$
Similarly at other points  of the boundary of $\R^2$. With this convention, note that 
$$
u_z(\infty,\infty)=\sigma\ast\Phi_z(\infty,\infty)=\intinf\intinf\sigma(\infty,\infty)\Phi_z(\xi,\eta)\,d\eta\,d\xi = \sigma(\infty,\infty)
$$
since the Poisson kernel integrates to $1$.  Similarly at other corners of $\Rbar^2$.  We use \eqref{poisson2} only
for $(x,y)\in\R^2$.

If $(x,y)$ is in a compact set $K\subset\R^2$ then there is a constant $k$ (depending on $K$ and $z$) such
that $[(x-\xi)^2 + (y-\eta)^2 +z^2]^{-3/2}\leq k[\xi^2 + \eta^2 +z^2]^{-3/2}$ for all $(\xi,\eta)\in\R^2$.
By dominated convergence we can differentiate under the integral in \eqref{poisson2} at each $(x,y,z)\in\R^3$.  Hence,
$u_z\in C^\infty(\R^3)$.  (Harmonic functions are in fact real analytic.)  

Since $\sigma$ is bounded, dominated convergence allows us to take limits under the integral and this
shows $u_z$ is continuous on the boundary of $\R^n$ at points of continuity of $\sigma$.  To show
continuity at other points on the boundary note that $\sigma_z$ is the sum of a finite number of
 terms of type, say,
$$
\sigma_{mj}\int_{p_{m-1}}^\infty \int_{p_{j-1}}^{p_j}\Phi_z(\xi-x,\eta-y)\,d\eta\,d\xi
=\sigma_{mj}\int_{p_{m-1}-x}^\infty \int_{p_{j-1}-y}^{p_j-y}\Phi_z(\xi,\eta)\,d\eta\,d\xi
$$
which is clearly continuous on $\Rbar^2$.
Hence, $u_z\in\balextwo$.

Convolution with the Poisson kernel is known to approximate a continuous function uniformly on compact
sets in $\R^2$ as $z$ decreases to $0$.  
Our function $\sigma$ need not be continuous but is still approximated
in this sense.   If $(0,0)\in P_{ij}$ let $Q_0=\cup\{P_{\alpha\beta}\mid\alpha=i-1,i,i+1,\beta=j-1,j,j+1\}$.
If $i$ or $j$ is $1$ or $n$ then this union might contain fewer than nine rectangles.  
Since $F$ is continuous and $\norm{F-\sigma}_\infty<\epsilon$ we have $\abs{\sigma_{\alpha\beta}-\sigma_{\gamma\delta}}
\leq 2\epsilon$ if $\abs{\alpha-\gamma}\leq 1$ and $\abs{\beta-\delta}\leq 1$.
For $(x,y)\in\Rbar^2$ we have 
$$
u_z(x,y)-\sigma(x,y)  =  \sigma\ast\Phi_z(x,y)-\sigma(x,y)\intinf\intinf\Phi_z(\xi,\eta)\,d\eta\,d\xi
$$
so that
$\abs{u_z(x,y)-\sigma(x,y)}
\leq I_1+I_2$ where
\begin{eqnarray*}
I_1 & = & \int\int_{Q_0}\abs{\sigma(x-\xi,y-\eta)-\sigma(x,y)}\Phi_z(\xi,\eta)\,d\eta\,d\xi\\
 & \leq & 2\epsilon\int\int_{Q_0}\Phi_z(\xi,\eta)\,d\eta\,d\xi \leq 2\epsilon,\\
I_2 & = & \int\int_{\R^2\setminus Q_0}\abs{\sigma(x-\xi,y-\eta)-\sigma(x,y)}\Phi_z(\xi,\eta)\,d\eta\,d\xi\\
 & \leq & 2(\norm{F}_\infty+\epsilon)\int\int_{\R^2\setminus Q_0}\Phi_z(\xi,\eta)\,d\eta\,d\xi
\to 0 \text{ as } z\downarrow 0.
\end{eqnarray*}
The last line above follows with dominated convergence.  If we let $z$ decrease to $0$ through rational values
then we see $\balextwo$ is separable.  This also shows $\alextwo$ is separable.

(c) The proof of (b) shows the real analytic functions are dense in $\balextwo$ and hence dense in $\alextwo$.
(d)
Primitives of Lebesgue integrable, Henstock--Kurzweil integrable and Denjoy integrable functions are
continuous.  When an absolutely continuous function is differentiated pointwise the derivative agrees with
the distributional derivative.  The same applies to primitives of the other two integrals, which are discussed in
\cite{celidze}.  Hence, the continuous primitive integral includes the Lebesgue, Henstock--Kurzweil and 
Denjoy integrals in the sense that the integrals agree on intervals.
\end{proof}

It is known that $C(X)$ is separable exactly when $X$ is a compact metric space.  For example, 
\cite[p.~221]{kadison}. This then shows that $\balextwo$ is separable.  
However, our construction in the above proof lets us conclude real analytic
functions are dense in $\balextwo$.

If $f$ is a function in $L^1(\R^2)$ then $f\in\alextwo$ and $\norm{f}\leq\norm{f}_1$ with equality if
$f\geq 0$ almost everywhere.  The norms are not equivalent on $L^1(\R^2)$.  For example, for $n\in\N$ let 
$f_n(x,y)=\sin(nx)\chi_{(0,2\pi)}(x)\chi_{(0,1)}(y)$.  Then $\norm{f_n}=\int_0^{\pi/n}\sin(nx)\,dx=2/n$
and $\norm{f_n}_1=\int_0^{2\pi}\abs{\sin(nx)}\,dx=4$. Hence, there can be no inequality
$c_1\norm{f}\leq \norm{f}_1\leq c_2\norm{f}$ for some constants $c_1$, $c_2$ and all $f\in L^1(\R^2)$.

\begin{prop}\label{propdense}
(a) $L^1(\R^2)$ is dense in $\alextwo$.
(b) The test functions are dense in $\alextwo$.
\end{prop}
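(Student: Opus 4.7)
The plan is to use the isometric isomorphism $\dalex\fn\balextwo\to\alextwo$ from Proposition~\ref{propunique}: to prove a subspace $S\subseteq\alextwo$ is dense it suffices to show that the primitives of its elements are dense in $\balextwo$ under $\norm{\cdot}_\infty$. Since $\Dtwo\subset L^1(\R^2)$ and $\norm{f}\leq\norm{f}_1$ for $f\in L^1(\R^2)$ (noted after Proposition~\ref{propseparable}), statement (a) is an immediate consequence of (b), so the whole task is (b).

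First I would pin down the shape of a primitive of a test function. If $\phi\in\Dtwo$ is supported in $[-R,R]^2$ then $\Phi(x,y)=\int_{-\infty}^x\int_{-\infty}^y\phi$ is smooth on $\R^2$, vanishes when $x\leq -R$ or $y\leq -R$, depends only on $y$ for $x\geq R$, only on $x$ for $y\geq R$, and is constant for $x,y\geq R$. So (b) reduces to the following: given $F\in\balextwo$ and $\epsilon>0$, construct a smooth $\Phi\in\balextwo$ of that structural form with $\norm{\Phi-F}_\infty<\epsilon$.

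The construction is smoothing followed by truncation. First apply Proposition~\ref{propseparable}(b)---specifically the Poisson convolution $u_z=\sigma\ast\Phi_z$---to find a smooth $G\in\balextwo$ with $\norm{F-G}_\infty<\epsilon/2$; inspection of that construction shows that the boundary functions $G(\infty,\cdot)$, $G(\cdot,\infty)$ and the value $G(\infty,\infty)$ are themselves smooth, being convolutions of restrictions of the step function $\sigma$ with the one-dimensional Poisson kernel. By uniform continuity of $G$ on the compact space $\Rbar^2$, for sufficiently large $N$ one has $\abs{G(x,y)-G(\infty,y)}<\epsilon'$ for $x\geq N$, the analogous estimate with the roles of $x$ and $y$ interchanged, $\abs{G(x,y)-G(\infty,\infty)}<\epsilon'$ for $x,y\geq N$, and $\abs{G(x,y)}<\epsilon'$ for $x\leq -N$ or $y\leq -N$ (note $G(\infty,-\infty)=G(-\infty,\infty)=0$, so the boundary pieces are also small there). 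Pick smooth cutoffs $\alpha,\beta\fn\R\to[0,1]$ with $\alpha=0$ on $(-\infty,-N-1]$, $\alpha=1$ on $[-N,\infty)$, $\beta=0$ on $(-\infty,N]$, $\beta=1$ on $[N+1,\infty)$, and set
\[
\Phi(x,y)=\alpha(x)\alpha(y)\bigl[(1-\beta(x))(1-\beta(y))G(x,y)+\beta(x)(1-\beta(y))G(\infty,y)+(1-\beta(x))\beta(y)G(x,\infty)+\beta(x)\beta(y)G(\infty,\infty)\bigr].
\]
A direct check shows $\Phi=G$ on $[-N,N]^2$; $\Phi$ depends only on $y$ for $x\geq N+1$, only on $x$ for $y\geq N+1$, equals the constant $G(\infty,\infty)$ for $x,y\geq N+1$, and vanishes on $\{x\leq -N-1\}\cup\{y\leq -N-1\}$. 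Hence $\Phi\in\balextwo$ is smooth with $\dalex\Phi\in\Dtwo$.

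The main obstacle is verifying the uniform bound $\norm{\Phi-G}_\infty<\epsilon/2$. In each of the regions determined by the cutoff supports, $\Phi(x,y)$ is an explicit convex combination of $G(x,y)$, $G(\infty,y)$, $G(x,\infty)$, $G(\infty,\infty)$ scaled by $\alpha(x)\alpha(y)\in[0,1]$, and the uniform-continuity estimates above say each of these values lies within $O(\epsilon')$ of $G(x,y)$ in the relevant region (with $G(x,y)$ itself within $\epsilon'$ of $0$ in the only region where the $\alpha$ factor differs from one, absorbing that loss as well). Choosing $\epsilon'$ sufficiently small then yields the desired bound. Combining with $\norm{F-G}_\infty<\epsilon/2$ gives $\norm{\dalex\Phi-f}=\norm{\Phi-F}_\infty<\epsilon$, which proves (b); and (a) follows at once since $\dalex\Phi\in\Dtwo\subset L^1(\R^2)$.
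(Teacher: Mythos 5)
Your argument is correct, but it runs in the opposite direction from the paper's and uses a different mechanism for part (b). The paper proves (a) first: the smooth Poisson approximants $u_z=\sigma\ast\Phi_z$ built in Proposition~\ref{propseparable} satisfy $\dalex u_z\in L^1(\R^2)$ (with $\norm{\dalex u_z}_1\leq 4\sum_{i,j}\abs{\sigma_{ij}}$), so $L^1(\R^2)$ is dense; then (b) follows in one line from $\norm{h}\leq\norm{h}_1$ together with the standard density of test functions in $L^1(\R^2)$, quoted from Folland. You instead prove (b) directly, by flattening a smooth approximant $G\in\balextwo$ near $\partial\R^2$ with cutoffs so that the resulting primitive $\Phi$ depends on only one variable, or is constant, or vanishes, outside a compact set, whence $\dalex\Phi\in\Dtwo$; (a) then follows from the inclusion $\Dtwo\subset L^1(\R^2)\subset\alextwo$. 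Your route is more self-contained (it does not import the density of $C_c^\infty$ in $L^1$) at the cost of the extra bookkeeping in the four-term convex-combination estimate and of checking that the boundary traces $G(\infty,\cdot)$ and $G(\cdot,\infty)$ are smooth --- both of which you handle correctly, the latter because these traces are one-dimensional Poisson convolutions of one-variable step functions. Both arguments rest on the same key input: the isometry $\dalex\fn\balextwo\to\alextwo$ reduces density in the Alexiewicz norm to uniform approximation of primitives.
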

\begin{proof}
(a) The construction in Proposition~\ref{propseparable} shows $L^1(\R^2)$ is dense in $\alextwo$ since
we can differentiate $u_z$ under the integral sign.  Integration then shows
$\norm{\dalex u_z}_1\leq 4\sum_{i,j}\abs{\sigma_{ij}}$.
(b) If $f\in\alextwo$ and $\epsilon>0$ there is $g\in L^1(\R^2)$ such that $\norm{f-g}<\epsilon$.
If $\phi$ is a test function then $\norm{f-\phi}\leq \norm{f-g}+\norm{g-\phi}_1$ and test functions
are known to be dense in $\Lone$.  For example, \cite[Proposition~8.17]{folland}.
\end{proof}

This then gives an alternative way to define the integral.  We have $\alextwo$ is the completion of
$\Lone$ in the Alexiewicz norm.  If $\{f_n\}\subset L^1(\R^2)$ is a Cauchy sequence in the 
Alexiewicz norm then it converges to a distribution $f\in\alextwo$.  Let $F_n, F\in\balextwo$ be the
respective primitives of $f_n$ and $f$.  Then $\dalex(F_n-F)=\dalex F_n-\dalex F=f_n-f$ so
$\norm{F_n-F}_\infty\to 0$.  This gives
\begin{eqnarray*}
\intabcd f & = & F(a,c)+F(b,d)-F(a,d)-F(b,c)\\
 & = & \lim_{n\to\infty}\left[F_n(a,c)+F_n(b,d)-F_n(a,d)-F_n(b,c)\right]\\
& = & \lim_{n\to\infty}\intabcd f_n
\end{eqnarray*}
and defines the integral of $f\in\alextwo$ using only Lebesgue integrals of functions in $L^1(\R^2)$.

\section{Examples}\label{sectionexamples}
If $f$ and $\tilde{f}$ are functions in $\alextwo$ such that $f$ and $\tilde{f}$ agree except on a set
of Lebesgue measure zero  then they have the same primitive in $\balextwo$ and hence the same integral
on all subintervals on $\Rbar^2$.  Of course, this pointwise comparison is not possible for distributions
in $\alextwo$ that do not happen to be functions.

Functions that have a conditionally convergent Henstock--Kurzweil or improper Riemann integral are 
in $\alextwo\setminus\Lone$.
For example, we can define $f(x,y)=\sin(x)\sin(y)/(xy)$ with
$f(x,y)=0$ if $x=0$ or $y=0$.  For another example take
$
G(x,y)=
x^2y^2\sin(x^{-4})\sin(y^{-4})$ with $G(x,y)=0$ if $x=0$ or $y=0$.  Then $G\in\balextwo$ so 
$\dalex G\in\alextwo\setminus L^1_{loc}(\R^2)$.

The above examples are products of a function of $x$ and a function of $y$.  More generally,
note that $\balex$ and $\balextwo$ are closed under pointwise products.  See Section~\ref{sectionbanachalgebra}.
Hence, if $F,G\in\balextwo$ so
is $FG$ and $\dalex(FG)\in\alextwo$.  
Similarly, the function $(x,y)\mapsto F(x,y)G(x)\in\balextwo$ if now $G\in C(\Rbar)$  or if
$G\in C((-\infty,\infty])$ and is bounded. 
In general we cannot apply a differentiation product rule
except when $G$ is of bounded variation.  See Section~\ref{sectionparts}.  

If $f,g\in\alex$ then
define $h\in\alextwo$ by $h(x,y)=f(x)g(y)$.  This is in fact a tensor product but we will not employ
any special notation.  We can take $F,G\in\balex$ to be functions of Weierstrass type that are
continuous but pointwise differentiable nowhere.  Then the distributional derivative is $\dalex(FG)=f'g'$.
Neither the Lebesgue nor Henstock--Kurzweil integral of $f'g'$ exists on any interval but the continuous
primitive integral is $\int_a^b\int_c^df'g'=[F(b)-F(a)][G(d)-G(c)]$ for all $[a,b]\times[c,d]\subseteq\Rbar^2$.
If we take $F,G\in\balex$ to be singular, i.e., continuous, not constant, with pointwise derivative equal
to $0$ almost everywhere, then the Lebesgue integral exists and gives $\int_a^b\int_c^df'g'=0$ while the
continuous primitive integral is again $\int_a^b\int_c^df'g'=[F(b)-F(a)][G(d)-G(c)]$.

An example of $F\in\balextwo$ that is not a product of functions in $\balex$ is $F(x,y)=\exp(-\sqrt{x^2+y^2})$.
Proposition~\ref{propboundaryvalues} also gives a procedure for constructing such primitives.

In Section~\ref{sectionchange} we discuss change of variables.  It is worth noting here that $C(\Rbar^2)$, $\balextwo$,
$\alextwo$ and $\dalex$ are not invariant under rotations.  For example, if $F(x,y)=x/(1+\abs{x})$ then
$F\in C(\Rbar^2)$ and $\dalex F=0$.  Rotate to get $G(x,y)=(x+y)/(1+\abs{x+y})$.  For $(x,y)\in\R^2$
we have $G(\infty,y)=1$, $G(-\infty,y)=-1$, $G(x,\infty)=1$, $G(x, -\infty)=-1$.  Hence, $G\not\in C(\Rbar^2)$.
And, $\dalex G\not=0$.

The topology of $\Rbar^2$ depends on the Cartesian coordinate system.
In $\Rbar^2$ we employ a four-point compactification of $\R^2$.  Stereographic projection uses a one-point
compactification so that a function continuous in the polar coordinates extended plane must 
have $\lim_{r\to\infty}F(r,\theta)$ equal
to a constant, independent of angle $\theta$.  If $F$ is continuous in this sense then  $F\in C(\Rbar^2)$.
The converse is not true; for example, $F(x,y)=\arctan(x)\arctan(y)$.

Also, in polar coordinates we could use a compactification of $\R^2$ with a continuum of points at infinity.
In this sense, a function $(r,\theta)\mapsto F(r,\theta)$ is continuous on this extended real plane
at $r=\infty$, $\alpha\in[-\pi,\pi]$, if
for each $\epsilon>0$ there is $\delta>0$ such that if $r>1/\delta$ and $\abs{\theta-\alpha}<\delta$ then
$\abs{F(r,\theta)-F(\infty,\alpha)}<\epsilon$.  This topology is neither coarser nor finer than that for
$\Rbar^2$.  For example,
the function
$F(x,y)=x/(1+\abs{x})$ is continuous on $\Rbar^2$.  Introduce polar coordinates by
defining $G(r,\theta)=F(r\cos\theta,r\sin\theta)=r\cos\theta/(1+r\abs{\cos\theta})$.  Then
$$
G(\infty,\theta)=\left\{\begin{array}{cl}
\frac{\cos\theta}{\abs{\cos\theta}}, & \abs{\theta}\not=\frac{\pi}{2}\\
0, & \abs{\theta}=\frac{\pi}{2}.
\end{array}
\right.
$$
Hence, $G$ is not continuous on the extended polar coordinates plane.
And, the function $G(r,\theta)=\sqrt{r}\sin(\theta)/(1+\sqrt{r})$ is continuous in polar
coordinates (continuum of points at infinity, not one-point compactification).  In Cartesian coordinates, $G$ 
becomes $F(x,y)=y/((x^2+y^2)^{1/4}+\sqrt{x^2+y^2})$.  And, $F(\pm\infty,y)=0$ for $y\in\R$,
$F(x,\pm\infty)=\pm 1$ for $x\in\R$ so $F\not\in C(\Rbar^2)$.

For any prescribed continuous function on $\partial\R^2$ there is a function in $C(\Rbar^2)$ with these
boundary values.

\begin{prop}\label{propboundaryvalues}
Suppose there are functions $\Theta_i\in C(\Rbar)$ such that $\Theta_1(\infty)=\Theta_2(-\infty)$,
$\Theta_2(\infty)=\Theta_3(\infty)$, $\Theta_3(-\infty)=\Theta_4(\infty)$, $\Theta_1(-\infty)=\Theta_4(-\infty)$.
Then there is a function $F\in C(\Rbar^2)$ such
that $F(-\infty,y)=\Theta_1(y)$, $F(x,\infty)=\Theta_2(x)$, $F(\infty,y)=\Theta_3(y)$, $F(x,-\infty)=\Theta_4(x)$.
\end{prop}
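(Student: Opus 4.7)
The plan is to write down an explicit formula of Coons-patch type. First choose a homeomorphism $\phi\fn\Rbar\to[0,1]$ with $\phi(-\infty)=0$ and $\phi(\infty)=1$; for instance $\phi(x)=(1+x/\sqrt{1+x^2})/2$ for $x\in\R$. Set $\alpha=1-\phi$ and $\beta=\phi$, both in $C(\Rbar)$, so $\alpha(-\infty)=\beta(\infty)=1$ and $\alpha(\infty)=\beta(-\infty)=0$. Label the four prescribed corner values
$$
c_{--}=\Theta_1(-\infty)=\Theta_4(-\infty),\quad c_{-+}=\Theta_1(\infty)=\Theta_2(-\infty),
$$
$$
c_{++}=\Theta_2(\infty)=\Theta_3(\infty),\quad c_{+-}=\Theta_3(-\infty)=\Theta_4(\infty).
$$

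I would then define
$$
F(x,y)=\alpha(x)\Theta_1(y)+\beta(x)\Theta_3(y)+\alpha(y)\Theta_4(x)+\beta(y)\Theta_2(x)-B(x,y),
$$
where the corner-correction term is
$$
B(x,y)=\alpha(x)\alpha(y)c_{--}+\alpha(x)\beta(y)c_{-+}+\beta(x)\beta(y)c_{++}+\beta(x)\alpha(y)c_{+-}.
$$
Continuity of $F$ on $\Rbar^2$ is immediate, since $F$ is a finite sum of products of continuous functions of one variable, and any such product is continuous on $\Rbar^2$ in the product topology. To check the boundary conditions, set $x=-\infty$, so that $\alpha(-\infty)=1$ and $\beta(-\infty)=0$; then $F(-\infty,y)=\Theta_1(y)+\alpha(y)c_{--}+\beta(y)c_{-+}-\alpha(y)c_{--}-\beta(y)c_{-+}=\Theta_1(y)$. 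The other three edges are dispatched by identical computations, each time using one of the four compatibility relations to cancel the surviving pieces of $B$ against the boundary contributions.

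A slicker but non-constructive alternative is to invoke Tietze's extension theorem. The set $\Rbar^2\setminus\R^2$ is closed in the compact Hausdorff, hence normal, space $\Rbar^2$, and as a topological space it is a circle built by gluing four copies of $\Rbar$ at their endpoints; the four compatibility conditions are exactly what is needed for the $\Theta_i$'s to glue into a single continuous function on this circle, which Tietze then extends continuously to all of $\Rbar^2$. In either approach the only real point is continuity and single-valuedness at the four corners $(\pm\infty,\pm\infty)$, and this is precisely what the hypotheses on the $\Theta_i$'s supply; I would favour the explicit Coons-patch formula because it gives a usable tool for producing primitives in $\balextwo$ with prescribed behaviour at infinity.
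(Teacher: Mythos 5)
Your proof is correct, and it takes a genuinely different route from the paper. The paper first treats the special case $\Theta_1=\Theta_4=0$ (so only the two edges through $(\infty,\infty)$ carry data), using the normalised product $\Theta_2(x)\Theta_3(y)/\Theta_2(\infty)$ when the shared corner value is nonzero and a sum with the blending function $\tfrac{1}{\pi}\left[\tfrac{\pi}{2}+\arctan(\cdot)\right]$ when it vanishes; it then handles $\Theta_2=\Theta_3=0$ symmetrically and superposes the two pieces. Your single transfinite-interpolation (Coons) formula $F=\alpha(x)\Theta_1(y)+\beta(x)\Theta_3(y)+\alpha(y)\Theta_4(x)+\beta(y)\Theta_2(x)-B(x,y)$ does all four edges at once, with no case split on corner values and no superposition step, and the verification is a one-line substitution at each edge using exactly the four compatibility relations; the continuity argument (finite sums of products $u(x)v(y)$ with $u,v\in C(\Rbar)$) is airtight. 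What the paper's version buys is the observation, made immediately after the proposition, that the $\Theta_1=\Theta_4=0$ piece lies in $\balextwo$ and so generates examples of primitives; your formula recovers this just as well, since setting $\Theta_1=\Theta_4=0$ forces $c_{--}=c_{-+}=c_{+-}=0$ and collapses $F$ to $\beta(x)\Theta_3(y)+\beta(y)\Theta_2(x)-\beta(x)\beta(y)c_{++}$, which visibly vanishes when $x$ or $y$ is $-\infty$. Your Tietze alternative is also valid ($\Rbar^2\setminus\R^2$ is a closed subset of a compact Hausdorff, hence normal, space, and the corner conditions are exactly what the pasting lemma needs), though being non-constructive it is less useful for the example-generating purpose the proposition serves in this paper.
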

\begin{proof}
First consider $\Theta_1=\Theta_4=0$.
Define
$$
F(x,y)=\left\{\begin{array}{cl}
\frac{\Theta_2(x)\Theta_3(y)}{\Theta_2(\infty)}, & \text{ if }
\Theta_2(\infty)\not=0\\
\frac{\Theta_2(x)}{\pi}\left[\frac{\pi}{2}+\arctan(y)\right]
+\frac{\Theta_3(y)}{\pi}\left[\frac{\pi}{2}+\arctan(x)\right], & \text{ if }
\Theta_2(\infty)=0.
\end{array}
\right.
$$
Now consider $\Theta_2=\Theta_3=0$.  Add the resulting functions.
\end{proof}
Note that the first part of the proof gives a function in $\balextwo$.  Differentiating then generates
a wealth of examples of distributions in $\alextwo$.

\section{Hardy--Krause bounded variation}\label{sectionhkbv}
If $g\fn\Rbar\to\R$ then the variation of $g$ is $V\!g=\sup\sum_{i=1}^N\abs{g(x_i)-g(x_{i-1})}$
where the supremum is taken over all partitions $-\infty\leq x_0<x_1<\ldots<x_N\leq\infty$.
The set of functions of bounded variation, denoted $\bv(\Rbar)$, is a Banach space under the norm $\norm{g}_{BV}=
\norm{g}_\infty+V\!g$.  If $f\in\alex$ with primitive $F\in \balex$ and $g\in\bv(\Rbar)$ then there is the integration by parts
formula $\intinf fg= F(\infty)g(\infty)-\intinf F\,dg$.  The last integral is a Henstock--Stieltjes integral.
See \cite{mcleod}.  If we take $\lambda_1, \lambda_2\in[0,1]$ such that $\lambda_1+\lambda_2=1$ (a convex combination)
and require $g\in\bv$ to satisfy $g(x)=\lambda_1g(x-)+\lambda_2g(x+)$ for each $x\in\R$ and $g(\infty)=\lim_{x\to\infty}
g(x)$, $g(-\infty)=\lim_{x\to-\infty}g(x)$ then $g$ is said to be of {\it normalised bounded variation}.  
For example,
taking $\lambda_1=0$ and $\lambda_2=1$ makes $g$ right continuous on $\R$. Sometimes different conditions
are imposed at $\pm\infty$.  A function of bounded variation need
only be changed on a countable collection of points to make it of normalised bounded variation.  
Note that the $\bv$ norm of a function is the same for any normalisation.  A normalisation can then be fixed and
the resulting space labeled $\nbv$.

Two intervals in $\Rbar^2$ are {\it nonoverlapping} if their intersection is of Lebesgue (planar)
measure zero.  A {\it division} of $\Rbar^2$ is a finite collection of nonoverlapping intervals
whose union is $\Rbar^2$.  If $g\fn\Rbar^2\to\R$ then its Vitali variation is $V_{12}g=\sup_D\sum_i\abs{
g(a_{i},c_{i})+g(b_i,d_i)-g(a_i,d_i)-g(b_i,c_i)}$ where the supremum is taken over all divisions
$D$ of $\Rbar^2$ and interval $I_i=[a_i,b_i]\times[c_i,d_i]$ is an interval in $D$.  If we fix one
variable and find the one-variable variation as a function of the remaining variable we write
$V_1g(\cdot,y_0)$ or $V_2g(x_0, \cdot)$ according as the second variable has been fixed as $y_0$
or the first variable fixed as $x_0$.  The space of Hardy--Krause bounded variation is defined as
follows.
\begin{defn}[Hardy--Krause variation]
Let $g\fn\Rbar^2\to\R$.  Suppose $V_{12}g$ is finite and for some $x_0$ and $y_0$ in $\Rbar$ and both
$V_1g(\cdot,y_0)$ and $V_2g(x_0, \cdot)$ are finite.  Then $g$ is said to be of finite
Hardy--Krause variation.  The set of all such functions is denoted $\hkbv(\Rbar^2)$.  We write 
$\norm{g}_{bv}=\norm{g}_\infty+\norm{V_1g}_\infty+\norm{V_2g}_\infty
+ V_{12}g$. 
\end{defn}
Basic results about functions of finite Hardy--Krause variation are proved in \cite{clarksonadams} and
\cite{appell}.  Our definition is slightly different but the same results hold.  In particular,
functions in $\hkbv(\Rbar^2)$ are bounded and
if $V_1g(\cdot,y_0)$ and $V_2g(x_0, \cdot)$ are finite for some $x_0$ and $y_0$ then $\norm{V_1g}_\infty$
and $\norm{V_2g}_\infty$ are finite and
$\hkbv(\Rbar^2)$ is a Banach space.

There are many types of variation for functions of two or more variables but Hardy--Krause variation
is the most appropriate for nonabsolute integration.  See \cite{clarksonadams}, \cite{adamsclarkson},
\cite{adamsclarksoncorrection} and
\cite{hildebrandt}.

\begin{example}\label{examplegbv}
For $(x,y)\in\R^2$ let $g=\chi_{[-\infty,x)\times[-\infty,y)}$.  Then $\norm{g}_\infty=1$.  And,
$$
V_1g(\cdot,t)  =  \left\{\begin{array}{cl}
0, & t\geq y\\
1, & t<y,
\end{array}
\right.
$$
$$
V_2g(s,\cdot)  =  \left\{\begin{array}{cl}
1, & s<x\\
0, & s\geq x,
\end{array}
\right.
$$
so that $\norm{V_1g}_\infty=\norm{V_2g}_\infty=1$.  Note that $V_{12}g=1$ since there is exactly
one interval in each division of $\Rbar^2$ with exactly one corner in $[-\infty,x)\times[-\infty,y)$.
Therefore $\norm{g}_{bv}=4$.

Similarly, if 
$$
g(x,y)=\left\{\begin{array}{cl}
1, & x\geq 0\\
0, & x<0
\end{array}
\right.
$$
then $\norm{g}_\infty=1$, $\norm{V_1g}_\infty=1$, $\norm{V_2g}_\infty=0$, $V_{12}g=0$ so that $\norm{g}_{bv}=2$.

If $I$ is a finite interval in
$\R^2$ and $g=\chi_I$ then we have $\norm{g}_\infty=1$, $\norm{V_1g}_\infty=\norm{V_2g}_\infty=2$, $V_{12}g=4$ 
so that $\norm{g}_{bv}=9$.
\end{example}

\begin{example}
The function
$$
g(x,y)=\left\{\begin{array}{cl}
1, & y>x\\
0, & y\leq x.
\end{array}
\right.
$$
is not of bounded Hardy--Krause variation.  Only intervals with one corner on the line $y=x$ contribute
to $V_{12}$.  But there can be a countable number of these.  For a similar example see \cite{ostaszewski2}.

It can be shown that $\chi_I$ is of bounded Hardy--Krause variation if and only if $I$ is
a finite union of intervals (in the fixed Cartesian coordinate system).
\end{example}

\section{Integration by parts}\label{sectionparts}
Note the classical formula, valid for $F,g\in C^2(\R^2)$ and all $a,b,c,d\in\R$,
\begin{align*}
&\intabcd\dalex(Fg)(x,y)\,dy\,dx\\
&=  F(a,c)g(a,c)+F(b,d)g(b,d)-F(a,d)g(a,d)-F(b,c)g(b,c)\\
&=\intabcd F_{12}(x,y)g(x,y)\,dy\,dx+\int_a^b[F(x,d)g_1(x,d)-F(x,c)g_1(x,c)]\,dx\\
&+\int_c^d[F(b,y)g_2(b,y)-F(a,y)g_2(a,y)]\,dy-\intabcd F(x,y)g_{12}(x,y)\,dy\,dx.
\end{align*} 
This gives us the form the integration by parts formula should have in $\alextwo$.
It is essentially the same as the formula for Henstock--Kurzweil integrals
\cite[Example~6.5.11]{lee}.  See also \cite{young}.
\begin{defn}[Integration by parts]\label{defnparts}
Let $f\in\alextwo$ with primitive $F\in\balextwo$.  Let $g\in\hkbv(\Rbar^2)$.  Let 
$[a,b]\times[c,d]\subseteq\Rbar^2$.
Define 
\begin{eqnarray*}
\intabcd fg & = & F(a,c)g(a,c)+F(b,d)g(b,d)-F(a,d)g(a,d)-F(b,c)g(b,c)\\
 & - & \int_a^b F(x,d)\,d_1g(x,d)+\int_a^bF(x,c)\,d_1g(x,c)\\
 & - & \int_c^dF(b,y)\,d_2g(b,y)+\int_c^dF(a,y)\,d_2g(a,y)\\
 & + & \intabcd F(x,y)\,d_{12}g(x,y).
\end{eqnarray*}
\end{defn}
Subscripts indicate a Henstock--Stieltjes integral with respect to the relevant variable.  This is defined
as follows \cite{bongiornopap}.  A {\it tagged division} of $\Rbar^2$ is a division for which each interval in the division
has an associated {\it tag}, which is a point in the interval.  A {\it gauge} is a mapping $\gamma$ from $\Rbar^2$ to
the open sets in $\Rbar^2$ such that $\gamma(x,y)$ is an open set containing point $(x,y)$.  An
interval-point pair in a tagged division is $\gamma$-fine if $I\subset\gamma(x,y)$ where $(x,y)$ is the
tag associated with interval $I$.  It is possible to choose $\gamma$ such that if $(x,y)\in\R^2$ then
$\gamma(x,y)\subset\R^2$.  This means that if an interval in a $\gamma$-fine tagged division intersects
the boundary of $\R^2$ then its tag must be on the boundary of $\R^2$.  (We always assume this of $\gamma$.)
Existence of $\gamma$-fine
tagged divisions is proven in  \cite{lee},
\cite{mcleod} and \cite{swartz}, and is a consequence of the compactness of $\Rbar^2$.  
If $F,g\fn\Rbar^2\to\R$ then the Henstock--Stieltjes integral 
$\intinf\intinf F(x,y)\,d_{12}g(x,y)$ exists with value $A\in\R$ if for every $\epsilon>0$ there
is a gauge $\gamma$ such that for each $\gamma$-fine tagged division $\{[a_i,b_i]\times[c_i,d_i],(x_i,y_i)\}_{i=1}^N$
we have
$$
\left|\sum_{i=1}^N F(x_i,y_i)[g(a_{i},c_{i})+g(b_i,d_i)-g(a_i,d_i)-g(b_i,c_i)]-A\right|<\epsilon.
$$

There are various other Stieltjes type integrals, including Riemann-Stieltjes, but they are all equivalent
when $F$ is continuous.  See \cite{hildebrandt}, \cite{mcleod} and \cite{hanungtvrdy}.

Note that according to Definition~\ref{defnparts}
\begin{eqnarray*}
\int_{-\infty}^x\int_{-\infty}^y fg & = & F(x,y)g(x,y)-\int_{-\infty}^x F(s,y)\,d_1g(s,y)\\
 & & \qquad-\int_{-\infty}^y F(x,t)\,d_2g(x,t)
+\int_{-\infty}^x\int_{-\infty}^y F(s,t)\,d_{12}g(s,t).
\end{eqnarray*}

\begin{remark}\label{remarkmultipliersmodule}
Every distribution, $T$, can be multiplied by every $C^\infty$ function, $\psi$, using
$\langle T\psi,\phi\rangle=\langle T, \phi\psi\rangle$ for test function $\phi$.  The
pointwise product $\phi\psi$ is again a test function.  And, Definition~\ref{defnparts}
now defines
the product $fg$.  Since $fg$ is integrable for each $f\in\alextwo$ we say $g\in\hkbv(\Rbar^2)$ is
a {\it multiplier} for the continuous primitive integral.  

The integration by parts formula then induces the
multiplication $\alextwo\times\hkbv\to\alextwo$ and $\alextwo$ is then a Banach $\hkbv$-module.  See
\cite{dales} for the definition.  Theorems similar to those in \cite[\S7]{talvilaregulated}
can then be proved.
\end{remark}

We can justify the above definition with the following observation.
\begin{prop}\label{propparts}
Suppose $F\in C(\Rbar^2)$, $f=\dalex F$, $g\in\hkbv(\Rbar^2)$.  For $(x,y)\in\Rbar^2$ define
\begin{eqnarray*}
\Phi(x,y) & = & F(x,y)g(x,y)-\int_{-\infty}^x F(s,y)\,d_1g(s,y)-\int_{-\infty}^y F(x,t)\,d_2g(x,t)\\
 &  & \qquad + \int_{-\infty}^x\int_{-\infty}^y F(s,t)\,d_{12}g(s,t).
\end{eqnarray*}
Then $\Phi\in C(\Rbar^2)$.  If $F\in\balextwo$ then $\Phi\in\balextwo$ and $\dalex\Phi\in\alextwo$.
\end{prop}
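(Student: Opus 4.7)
The plan is to show $\Phi\in C(\Rbar^2)$ by a cancellation argument, then verify the boundary vanishing when $F\in\balextwo$; membership $\dalex\Phi\in\alextwo$ will then follow from Proposition~\ref{propunique}(c). First I would verify that the three Henstock--Stieltjes integrals in the definition of $\Phi$ are well defined. Writing $H_1(x,y)=\int_{-\infty}^x F(s,y)\,d_1g(s,y)$, $H_2(x,y)=\int_{-\infty}^y F(x,t)\,d_2g(x,t)$, and $H_{12}(x,y)=\int_{-\infty}^x\int_{-\infty}^y F\,d_{12}g$, each has a continuous integrand (sliced from $F$) and a bounded-variation integrator, with variations controlled uniformly by $\norm{V_1g}_\infty$, $\norm{V_2g}_\infty$, or $V_{12}g$. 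Standard existence results for Henstock--Stieltjes integrals then give $|H_1|\le\norm{F}_\infty\norm{V_1g}_\infty$, $|H_2|\le\norm{F}_\infty\norm{V_2g}_\infty$, and $|H_{12}|\le\norm{F}_\infty V_{12}g$.

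The core of the argument is joint continuity of $\Phi=Fg-H_1-H_2+H_{12}$. My strategy is to prove the stronger \emph{uniform-in-the-transverse-variable} one-variable continuity $\sup_{y\in\Rbar}|\Phi(x_0+h,y)-\Phi(x_0,y)|\to 0$ as $h\to 0$, and the symmetric statement in $y$; joint continuity at $(x_0,y_0)$ then follows from the triangle inequality $|\Phi(x,y)-\Phi(x_0,y_0)|\le|\Phi(x,y)-\Phi(x_0,y)|+|\Phi(x_0,y)-\Phi(x_0,y_0)|$. To obtain the uniform estimate I would pair the increments as $(Fg)-H_1$ versus $-H_2+H_{12}$. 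For the first pair, uniform continuity of $F$ on the compact space $\Rbar^2$ lets me replace $F(s,y)$ by $F(x_0,y)$ on $[x_0,x_0+h]$, so that the leading contribution $F(x_0,y)[g(x_0+h,y)-g(x_0,y)]$ arising both from the jump of $Fg$ and from $\int_{x_0}^{x_0+h}F(x_0,y)\,d_1g(s,y)$ cancels, leaving only a remainder of size $\omega_F(h)(\norm{V_1g}_\infty+\norm{g}_\infty)$ with $\omega_F$ the modulus of continuity of $F$, which is independent of $y$. For the second pair, the restriction of the signed measure $d_{12}g$ to the strip $(x_0,x_0+h]\times\R$ has $t$-marginal equal to $d_2[g(x_0+h,t)-g(x_0,t)]$; replacing $F(s,t)$ inside $H_{12}(x_0+h,y)-H_{12}(x_0,y)$ by $F(x_0,t)$ (error $\omega_F(h)V_{12}g$), and replacing $F(x_0+h,t)$ inside $H_2(x_0+h,y)$ by $F(x_0,t)$ (error $\omega_F(h)\norm{V_2g}_\infty$), the two leading pieces cancel exactly, again uniformly in $y$. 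The same analysis in the second variable and at the boundary points of $\Rbar^2$ (where the increments become one-sided) gives full joint continuity.

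For the boundary conditions, if $F\in\balextwo$ then $F(-\infty,\cdot)\equiv 0$ and $F(\cdot,-\infty)\equiv 0$, so direct substitution forces $F(x,y)g(x,y)$ and each Stieltjes integral in $\Phi(-\infty,y)$ and $\Phi(x,-\infty)$ to vanish (either the integrand is identically $0$ or the interval of integration is degenerate). Combined with the continuity already established, this shows $\Phi\in\balextwo$, and Proposition~\ref{propunique}(c) gives $\dalex\Phi\in\alextwo$. The main obstacle is the uniform-in-the-transverse-variable cancellation in the middle paragraph: one has to be careful about endpoint conventions for Henstock--Stieltjes integrals at jumps of $g$, and it is the \emph{uniform} continuity of $F$ on compact $\Rbar^2$ (rather than plain continuity) that converts the pointwise cancellation at individual lines of discontinuity of $g$ into an estimate uniform in the variable that is being held fixed.
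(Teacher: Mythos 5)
Your argument is correct, and it rests on the same cancellation mechanism as the paper's proof: the boundary Stieltjes integrals $\int F\,d_1g$ and $\int F\,d_2g$ cancel against the one-variable marginals of the double integral $\int\int F\,d_{12}g$, leaving only Stieltjes integrals of differences $F(\cdot)-F(\cdot')$ at nearby points, which are controlled by $\norm{V_1g}_\infty$, $\norm{V_2g}_\infty$ and $V_{12}g$. Where you differ is in the organization. The paper takes the full two-variable increment $\Phi(x,y)-\Phi(\xi,\eta)$ with $\xi\le x$, $\eta\le y$, regroups it into eight difference-integrals (splitting the double integral over the three regions $(-\infty,\xi]\times(\eta,y]$, $(\xi,x]\times(-\infty,\eta]$, $(\xi,x]\times(\eta,y]$), and concludes by dominated convergence. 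You instead prove one-variable continuity that is uniform in the transverse variable, pairing $(Fg)-H_1$ and $-H_2+H_{12}$ for an $x$-increment, and recover joint continuity by the triangle inequality; the price is that you must check the marginal identity $\int_{x_0}^{x_0+h}\int_{-\infty}^{y}F(x_0,t)\,d_{12}g(s,t)=\int_{-\infty}^{y}F(x_0,t)\,d_2\bigl[g(x_0+h,t)-g(x_0,t)\bigr]$, which is the same reduction the paper uses implicitly when it writes terms like $\int_{-\infty}^{\xi}F(s,y)\,[d_1g(s,y)-d_1g(s,\eta)]$. What your route buys is a quantitative modulus-of-continuity estimate $\sup_y\abs{\Phi(x_0+h,y)-\Phi(x_0,y)}\le C\,\omega_F(h)\norm{g}_{bv}$ in place of a qualitative dominated-convergence limit, and a cleaner split into two symmetric one-dimensional computations; the paper's version handles the mixed region $(\xi,x]\times(\eta,y]$ in one pass rather than having it absorbed into two successive one-variable steps. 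One small point to make explicit: the modulus $\omega_F$ must be taken with respect to a metric on the compact space $\Rbar^2$ (not the Euclidean metric on $\R^2$), so that the same estimate covers the one-sided increments at points of $\partial\R^2$; with that understanding your boundary discussion and the final appeal to Proposition~\ref{propunique}(c) are exactly as in the paper.
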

\begin{proof}
To prove continuity at $(x,y)\in\R^2$ let $(\xi,\eta)\in\R^2$. It suffices to consider $\xi\leq x$
and $\eta\leq y$.  Then
\begin{align}
&\Phi(x,y)-\Phi(\xi,\eta)  =  F(x,y)g(x,y)-F(\xi,\eta)g(\xi,\eta)\label{phi1}\\
&\qquad - \int_{-\infty}^x F(s,y)\,d_1g(s,y)+ \int_{-\infty}^{\xi} F(s,\eta)\,d_1g(s,\eta)\label{phi2}\\
&\qquad - \int_{-\infty}^y F(x,t)\,d_2g(x,t)+ \int_{-\infty}^\eta F(\xi,t)\,d_2g(\xi,t)\label{phi3}\\
&\qquad+ \int_{-\infty}^x\int_{-\infty}^y F(s,t)\,d_{12}g(s,t)-
\int_{-\infty}^\xi\int_{-\infty}^\eta F(s,t)\,d_{12}g(s,t).\label{phi4}
\end{align}
The right side of \eqref{phi1} is written 
\begin{equation}
[F(x,y)-F(\xi,\eta)]g(\xi,\eta)+F(x,y)[g(x,y)-g(\xi,\eta)].\label{P1}
\end{equation}
Line \eqref{phi2} is written
$$
- \int_{-\infty}^\xi[F(s,y)-F(s,\eta)]\,d_1g(s,\eta)
+\int_{-\infty}^\xi F(s,y)\,d_1g(s,\eta)
 - \int_{-\infty}^x F(s,y)\,d_1g(s,y).
$$
Line \eqref{phi3} is written
$$
-\int_{-\infty}^\eta[F(x,t)- F(\xi,t)]\,d_2g(\xi,t) +\int_{-\infty}^\eta
F(x,t)\,d_2g(\xi,t)
- \int_{-\infty}^y F(x,t)\,d_2g(x,t).
$$
Line \eqref{phi4} is written
\begin{align*}
&\int_{-\infty}^\xi\int_{\eta}^y[F(s,t)-F(s,y)]\,d_{12}g(s,t)
+\int_{-\infty}^\xi F(s,y)\,[d_{1}g(s,y) -d_1g(s,\eta)]\\
&+\int_{\xi}^x\int_{-\infty}^\eta[F(s,t)-F(x,t)]\,d_{12}g(s,t)
+\int_{-\infty}^\eta F(x,t)\,[d_{2}g(x,t) -d_2g(\xi,t)]\\
&+\int_{\xi}^x\int_{\eta}^y[F(s,t)-F(x,y)]\,d_{12}g(s,t)
+F(x,y)\int_{\xi}^x[d_1g(s,y)-d_1g(s,\eta)].
\end{align*}
Combining the above four terms gives
\begin{align*}
&\abs{\Phi(x,y)-\Phi(\xi,\eta)}\leq \abs{F(x,y)-F(\xi,\eta)}\abs{g(\xi,\eta)}\\
&+\left|\int_{-\infty}^\xi[F(s,y)-F(s,\eta)]\,d_1g(s,\eta)\right|
+\left|\int_{\xi}^x[F(x,y)-F(s,y)]\,d_1g(s,y)\right|\\
&+\left|\int_{-\infty}^\eta[F(x,t)-F(\xi,t)]\,d_2g(\xi,t)\right|
+\left|\int_{\eta}^y[F(x,y)-F(x,t)]\,d_2g(x,t)\right|\\
&+\left|\int_{-\infty}^\xi\int_\eta^y[F(s,t)-F(s,y)]\,d_{12}g(s,t)\right|
+\left|\int_{\xi}^x\int_{-\infty}^\eta [F(s,t)-F(x,t)]\,d_{12}g(s,t)\right|\\
&+\left|\int_{\xi}^x\int_\eta^y [F(s,t)-F(x,y)]\,d_{12}g(s,t)\right|.
\end{align*}
The integrals in the above line with respect to $d_1g$ are bounded by $2\norm{F}_\infty\norm{V_1g}_\infty$;
those with respect to $d_2g$ are bounded by $2\norm{F}_\infty\norm{V_2g}_\infty$;
those with respect to $d_{12}g$ are bounded by $2\norm{F}_\infty V_{12}g$.
By dominated convergence and the continuity of $F$ it then follows that they all tend to $0$ as
$(\xi,\eta)\to(x,y)$.  This shows $\Phi$ is continuous on $\R^2$.
Minor changes show continuity on $\Rbar^2$.  It follows from the definition of $\Phi$ that if
$F\in\balextwo$ then $\Phi\in\balextwo$.
\end{proof}
\begin{remark}\label{remarkparts}
Note that Definitions \ref{defnctsprimitiveintegral} and \ref{defnparts} agree in the case when $g$ is the 
characteristic function of an interval.  Suppose $g=\chi_I$ where $I=[a,b]\times[c,d]$ is a compact
interval in $\R^2$.  Since $F(x,y)$ vanishes when $x=-\infty$ or $y=-\infty$,
Definition \ref{defnparts} gives
\begin{eqnarray*}
\intinf\intinf fg & = & F(\infty,\infty)g(\infty,\infty)-\int_{-\infty}^\infty F(x,\infty)\,d_1g(x,\infty)\\
 & &\qquad-\int_{-\infty}^\infty F(\infty,y)\,d_2g(\infty,y)
+\int_{-\infty}^\infty\int_{-\infty}^\infty F(x,y)\,d_{12}g(x,y)\\
  & = & \int_{-\infty}^\infty\int_{-\infty}^\infty F(x,y)\,d_{12}g(x,y),
\end{eqnarray*}
last line following since $g$ is constant in a neighbourhood of $\partial\R^2$.

Now use Definition~\ref{defnctsprimitiveintegral}.
Suppose $[s,t]\times[u,v]$ is an interval in a tagged division of $\Rbar^2$.  Let
$\Delta g=g(s,u)+g(t,v)-g(s,v)-g(t,u)$.  A Riemann sum consists of terms $F(z_1,z_2)\Delta g$
for some tag $(z_1,z_2)\in[s,t]\times[u,v]$.  Consider the point $(a,c)$.  
For any gauge $\gamma$, a $\gamma$-fine tagged division can be chosen so that
$(a,c)$ is in the interior of exactly one interval and $(a,c)$ is the tag for this interval.
For this interval $\Delta g=1$.  Similarly with the points $(b,d)$, $(a,d)$, $(b,c)$.  And 
$\Delta g$ vanishes for all but four intervals in the Riemann sum.   This shows
$$
\int_{-\infty}^\infty\int_{-\infty}^\infty F(x,y)\,d_{12}g(x,y)=F(a,c)+F(b,d)-F(a,d)-F(b,c).
$$
Similarly, if $I\subseteq\Rbar^2$.
\end{remark}

By Proposition~\ref{propdense}
every distribution in $\alextwo$ is the limit in the Alexiewicz norm of a sequence of functions in $L^1(\R^2)$.
We can show how this also holds for $\dalex\Phi$ from Proposition~\ref{propparts}.
\begin{prop}\label{propPhin}
Let $f\in\alextwo$.  Let $\{f_n\}\subset L^1(\R^2)$ such that $\norm{f-f_n}\to 0$.
Let $F$ and $F_n$ be the respective primitives in $\balextwo$.
With $\Phi$ as in Proposition~\ref{propparts} and $\Phi_n$ similarly for $F_n$ we have
$\norm{\dalex\Phi_n-\dalex\Phi}\to 0$ as $n\to\infty$.
\end{prop}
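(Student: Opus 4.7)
The plan is to exploit the isometry from Proposition~\ref{propunique}(b) together with the linearity of the construction in Proposition~\ref{propparts}. By that isometry, $\norm{\dalex\Phi_n-\dalex\Phi}=\norm{\Phi_n-\Phi}_\infty$, so the whole problem reduces to showing $\norm{\Phi_n-\Phi}_\infty\to 0$. Since the hypothesis tells us $\norm{f-f_n}=\norm{F-F_n}_\infty\to 0$ (again by the isometry), the essential task is to propagate this uniform estimate through the four terms in the definition of $\Phi$.

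First I would write $\Phi-\Phi_n$ by subtracting the two definitions term by term (this is legitimate because the Henstock--Stieltjes integrals are linear in the integrand):
\begin{align*}
\Phi(x,y)-\Phi_n(x,y)
&=[F-F_n](x,y)\,g(x,y)
-\int_{-\infty}^x[F-F_n](s,y)\,d_1g(s,y)\\
&\quad-\int_{-\infty}^y[F-F_n](x,t)\,d_2g(x,t)
+\int_{-\infty}^x\int_{-\infty}^y[F-F_n](s,t)\,d_{12}g(s,t).
\end{align*}
Each Stieltjes integral above is bounded by $\norm{F-F_n}_\infty$ times a variation of $g$. Specifically, the pointwise product term is bounded by $\norm{F-F_n}_\infty\norm{g}_\infty$; the $d_1g$ integral is bounded by $\norm{F-F_n}_\infty\norm{V_1g}_\infty$; the $d_2g$ integral is bounded by $\norm{F-F_n}_\infty\norm{V_2g}_\infty$; and the $d_{12}g$ double integral is bounded by $\norm{F-F_n}_\infty V_{12}g$. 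Summing these and taking the supremum over $(x,y)\in\Rbar^2$ yields
$$\norm{\Phi-\Phi_n}_\infty\leq \norm{F-F_n}_\infty\,\norm{g}_{bv}.$$

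Since $\norm{F_n-F}_\infty=\norm{f_n-f}\to 0$ by hypothesis and $\norm{g}_{bv}$ is a fixed finite constant, the right side tends to $0$. Combining with the isometry from step one gives $\norm{\dalex\Phi_n-\dalex\Phi}\to 0$, as required. The only mild obstacle is verifying the standard bounds on Henstock--Stieltjes integrals in terms of the uniform norm of the integrand and the variation of the integrator in two variables; these follow directly from the Riemann-sum definition given in Section~\ref{sectionparts} and the definition of Hardy--Krause variation, so no serious difficulty is anticipated.
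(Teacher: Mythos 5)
Your proof is correct and follows the paper's own argument: the paper's entire proof is the single estimate $\abs{\Phi_n(x,y)-\Phi(x,y)}\leq\norm{F_n-F}_\infty\bigl(\norm{g}_\infty+\norm{V_1g}_\infty+\norm{V_2g}_\infty+V_{12}g\bigr)=\norm{F_n-F}_\infty\norm{g}_{bv}$, which is exactly the bound you derive by subtracting the two formulas term by term. You have merely supplied the (correct) intermediate details of the linearity of the Stieltjes integrals and the standard variation bounds that the paper leaves implicit.
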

\begin{proof}
We have the estimate
$$
\abs{\Phi_n(x,y)-\Phi(x,y)}  \leq  \norm{F_n-F}_\infty\left(\norm{g}_\infty+\norm{V_1g}_\infty+
\norm{V_2g}_\infty+V_{12}g\right)
$$
from which the result follows.
\end{proof}

If $\phi\in\Dtwo$ then $\phi\in\hkbv(\Rbar^2)$ so integration by parts gives another interpretation
of the action of $f\in\alextwo$ as a distribution.  Let $F\in\balextwo$ be the primitive of $f$.  We have
\begin{eqnarray*}
\langle f,\phi\rangle & = & \langle \dalex F,\phi\rangle=\langle F,\dalex\phi\rangle=
F(\infty,\infty)\phi(\infty,\infty)-\intinf F(s,\infty)\,d_1\phi(s,\infty)\\
 & & \qquad -\intinf F(\infty,t)\,d_2(\infty,t)
+\intinf\intinf f\phi\\
 & = & \intinf\intinf f\phi.
\end{eqnarray*}

If $F$ is a continuous function on $\Rbar$ and $g$ is of bounded variation the one-dimensional
formula is well-known:
$$
\intinf g\,dF=F(\infty)g(\infty)-F(-\infty)g(-\infty)-\intinf F\,dg.
$$
For example, see \cite{mcleod}.  There is an analogue in $\alextwo$.
\begin{prop}\label{propgdF}
Let $F\in\balextwo$ and $f=\dalex F$.  Let $g\in\hkbv(\Rbar^2)$.  
Then for $[a,b]\times[c,d]\subseteq\Rbar^2$
\begin{align*}
&\intabcd\left[F(a,c)+F(x,y)-F(a,y)-F(x,c)\right]d_{12}g(x,y)\\
&\qquad=\intabcd\left[g(x,y)+g(b,d)-g(x,d)-g(b,y)\right]d_{12}F(x,y).
\end{align*}
In particular,
\begin{eqnarray}
\intinf\intinf F\,d_{12}g
 & = & \intinf\intinf g\,d_{12}F+g(\infty,\infty)F(\infty,\infty)-\intinf g(x,\infty)\,d_1F(x,\infty)\notag\\
 & & \qquad-\intinf g(\infty,y)\,d_2F(\infty,y)\label{hildebrandtA1}
\end{eqnarray}
so that if $g(x,y)$ vanishes when $x$ or $y$ is $\infty$ then
\begin{equation}
\intinf\intinf fg=\intinf\intinf F\,d_{12}g=\intinf\intinf g\,d_{12}F.\label{hildebrandtA2}
\end{equation}
If $g(x,y)$ vanishes when $x$ or $y$ is $-\infty$ then
\begin{eqnarray}
\intinf\intinf fg & = & F(\infty,\infty)g(\infty,\infty)-\int_{-\infty}^\infty F(x,\infty)\,d_1g(x,\infty)\\
 & &\qquad-\int_{-\infty}^\infty F(\infty,y)\,d_2g(\infty,y)
+\int_{-\infty}^\infty\int_{-\infty}^\infty F\,d_{12}g\notag\\
  & = & \intinf\intinf g\,d_{12}F.\label{hildebrandtB2}
\end{eqnarray}

\end{prop}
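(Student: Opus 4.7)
The plan is to prove the main identity by a two-dimensional Abel rearrangement (summation by parts) applied to Riemann--Stieltjes sums on a common partition, and then obtain \eqref{hildebrandtA1}, \eqref{hildebrandtA2}, \eqref{hildebrandtB2} as corollaries.

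First I would fix a partition $a=x_0<\ldots<x_m=b$, $c=y_0<\ldots<y_n=d$ and write $\Delta_{kl}F=F(x_{k-1},y_{l-1})+F(x_k,y_l)-F(x_{k-1},y_l)-F(x_k,y_{l-1})$, similarly for $\Delta_{kl}g$. Abbreviate $U(x,y)=F(a,c)+F(x,y)-F(a,y)-F(x,c)$ and $V(x,y)=g(x,y)+g(b,d)-g(x,d)-g(b,y)$, and observe that $U$ vanishes on $\{x=a\}\cup\{y=c\}$, $V$ vanishes on $\{x=b\}\cup\{y=d\}$, and telescoping yields $U(x_i,y_j)=\sum_{k\le i,\,l\le j}\Delta_{kl}F$. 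I would then form the Riemann sum for $\intabcd U\,d_{12}g$ with tags at the lower-left corner $(x_{i-1},y_{j-1})$ of each subrectangle, namely $\sum_{i,j}U(x_{i-1},y_{j-1})\Delta_{ij}g$. Expanding $U$ by the telescoping identity and swapping the order of summation rewrites this as $\sum_{k,l}\Delta_{kl}F\cdot\bigl(\sum_{i>k,\,j>l}\Delta_{ij}g\bigr)$, whose inner sum telescopes to $V(x_k,y_l)$. Because $V$ vanishes at $x_m=b$ and $y_n=d$, the outer range can be extended over all $k,l$ without change, producing exactly the Riemann sum for $\intabcd V\,d_{12}F$ with tags at the upper-right corner $(x_k,y_l)$. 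This is the Abel identity at the finite level.

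Next I would pass to the limit. Both Henstock--Stieltjes integrals exist under the hypotheses: the left-hand side is of the type already used in Definition~\ref{defnparts} and Proposition~\ref{propparts}, and the right-hand side is symmetric, since $V\in\hkbv(\Rbar^2)$ and $F\in C(\Rbar^2)$. Given $\epsilon>0$, I would choose a common gauge $\gamma$ fine enough to control both integrals and refine the partition above so that both the lower-left and the upper-right corner taggings are $\gamma$-fine (possible by compactness of $\Rbar^2$). The two Riemann sums are literally equal by the preceding step, yet each is within $\epsilon$ of its respective integral; letting $\epsilon\downarrow 0$ forces equality of the two integrals.

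Finally I would derive the three special cases. For \eqref{hildebrandtA1} specialise to $[a,b]\times[c,d]=\Rbar^2$; since $F\in\balextwo$ vanishes on the $-\infty$-edges, $U$ collapses to $F$. Splitting $V$ into its four summands and integrating each against $d_{12}F$, the constant $g(\infty,\infty)$ gives $g(\infty,\infty)F(\infty,\infty)$, while the one-variable pieces $g(x,\infty)$ and $g(\infty,y)$ telescope in the unused direction to yield the one-dimensional boundary Stieltjes integrals. Formula \eqref{hildebrandtA2} follows by inserting the hypothesis $g(x,\infty)=g(\infty,y)=0$ into \eqref{hildebrandtA1}, together with Definition~\ref{defnparts} (all boundary terms there vanish, leaving $\intinf\intinf fg=\intinf\intinf F\,d_{12}g$). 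For \eqref{hildebrandtB2} I would apply Definition~\ref{defnparts}: the hypothesis $g(x,-\infty)=g(-\infty,y)=0$ zeros out the $-\infty$-edge boundary terms, giving the first displayed line; substituting \eqref{hildebrandtA1} for $\intinf\intinf F\,d_{12}g$ and using the one-dimensional integration by parts $\intinf F(x,\infty)\,d_1g(x,\infty)+\intinf g(x,\infty)\,d_1F(x,\infty)=F(\infty,\infty)g(\infty,\infty)$ (since $F(-\infty,\infty)=0$) plus its $y$-analogue, the remaining terms consolidate into $\intinf\intinf g\,d_{12}F$.

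The main obstacle is the boundary bookkeeping: one must verify that corner-tagged partitions of $\Rbar^2$ can be made $\gamma$-fine for any gauge, and that $\intabcd V\,d_{12}F$ really exists from the stated hypotheses---essentially the two-dimensional Hildebrandt-type existence theorem, which is already implicit in the continuity argument of Proposition~\ref{propparts}.
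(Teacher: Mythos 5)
Your strategy genuinely differs from the paper's: the paper simply quotes the two-dimensional Stieltjes integration-by-parts identity from Hildebrandt (Theorem~8.8, p.~127) and then expands and specialises it, whereas you prove that identity from scratch by a two-dimensional Abel rearrangement. Your finite-level computation is correct: with $U$ and $V$ as you define them, $\sum_{i,j}U(x_{i-1},y_{j-1})\Delta_{ij}g=\sum_{k,l}V(x_k,y_l)\Delta_{kl}F$ on any product grid, and your derivations of \eqref{hildebrandtA1}, \eqref{hildebrandtA2} and \eqref{hildebrandtB2} from the main identity (splitting $V$ into four pieces, collapsing the single-variable pieces to one-dimensional boundary Stieltjes integrals, and combining with Definition~\ref{defnparts} and one-dimensional integration by parts using $F(-\infty,\cdot)=F(\cdot,-\infty)=0$) are sound and essentially match the paper's manipulations. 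What your route buys is a self-contained proof; what it costs is that you must now justify the limit passage that Hildebrandt's theorem packages away.

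That limit passage is where the gap is. You assert that, given a gauge $\gamma$, one can refine to a grid for which both the lower-left and the upper-right corner taggings are $\gamma$-fine, "by compactness of $\Rbar^2$." This is false for general gauges, already in one dimension: if $\gamma(x)=(x/2,3x/2)$ for $x>0$, then no interval $[0,x_1]$ with $x_1>0$ is contained in $\gamma(x_1)$, so no partition of $[0,1]$ tagged at right endpoints is $\gamma$-fine. Compactness yields $\gamma$-fine tagged divisions, but it does not let you prescribe which corner serves as the tag. A second, related issue is that even granting the Abel identity you have only compared corner-tagged sums over product grids, while the Henstock--Stieltjes integrals of Definition~\ref{defnparts} are limits over arbitrary $\gamma$-fine tagged divisions with arbitrary tags. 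The repair is to invoke the equivalence, for a continuous integrand against an integrator of bounded Vitali variation, of the gauge-type integral with the norm-limit (refinement) Riemann--Stieltjes integral --- a fact the paper itself relies on ("these are all equivalent when $F$ is continuous") and which is in Hildebrandt. With that equivalence, uniform continuity of $U$ on the compact set $\Rbar^2$ shows the lower-left-tagged grid sums converge to $\intabcd U\,d_{12}g$ as the grids refine; your identity then forces the upper-right-tagged sums to converge to the same value, and one more appeal to the equivalence identifies that limit with $\intabcd V\,d_{12}F$ in the sense used in the statement. You flag this existence question in your final paragraph, but it must be closed by citation or proof rather than by the simultaneous-fineness claim.
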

\begin{proof}
The first line is from Theorem~8.8, page~127
in \cite{hildebrandt}, which is easily extended from a compact interval to $\Rbar^2$.
The author considers various Stieltjes integrals but these are all equal under the
hypotheses of our theorem.  This first line can be written
\begin{align*}
&F(a,c)[g(a,c)+g(b,d)-g(a,d)-g(b,c)]+\intabcd F\,d_{12}g\\
&\qquad-\int_c^d F(a,y)\left[d_2g(b,y)-d_2g(a,y)\right]
-\int_a^b F(x,c)\left[d_1g(x,d)-d_1g(x,c)\right]\\
&=\intabcd g\,d_{12}F+g(b,d)\left[F(a,c)+F(b,d)-F(a,d)-F(b,c)\right]\\
&\qquad-\int_a^b g(x,d)\left[d_1F(x,d)-d_1F(x,c)\right]
-\int_c^d g(b,y)\left[d_2F(b,y)-d_2F(a,y)\right].
\end{align*}
Taking limits gives \eqref{hildebrandtA1} and \eqref{hildebrandtA2}.  Interchanging
$F$ and $g$ in the first line and repeating these steps gives the final equations.
\end{proof}

\begin{prop}[First and second mean value theorem for integrals]
Suppose $g\in\hkbv(\Rbar^2)$ such that $g(a,c)+g(b,d)-g(a,d)-g(b,c)\geq 0$
for all $[a,b]\times[c,d]\subseteq\Rbar^2$.
(a) Suppose $F\in C(\Rbar^2)$.
Then there exists $(\xi,\eta)\in\Rbar^2$ such that
$$
\intinf\intinf F\,d_{12}g=F(\xi,\eta)\left[g(-\infty,-\infty)+g(\infty,\infty)-g(-\infty,\infty)-
g(\infty,-\infty)\right].
$$
(b) Let $f\in\alextwo$ and let $F\in\balextwo$ be its primitive.  
Suppose $g(x,y)$ vanishes when $x$ or $y$ is infinity.
Then there is $(\xi,\eta)\in\Rbar^2$ such that
$$
\intinf fg=F(\xi,\eta)\left[g(-\infty,-\infty)+g(\infty,\infty)-g(-\infty,\infty)-
g(\infty,-\infty)\right].
$$
\end{prop}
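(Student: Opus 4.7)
The plan is to prove (a) by the standard sandwich-and-intermediate-value argument, and to derive (b) from (a) by invoking the integration by parts identity from Proposition~\ref{propgdF}.

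For (a), write $\mu(I)=g(a,c)+g(b,d)-g(a,d)-g(b,c)$ for $I=[a,b]\times[c,d]$. By hypothesis $\mu(I)\geq 0$ for every subinterval, and $\mu$ is finitely additive over divisions of $\Rbar^2$: for any division $D=\{I_1,\ldots,I_N\}$ one has $\sum_{i=1}^N\mu(I_i)=\mu(\Rbar^2)=:\Delta g$, which reduces, after passing to a common grid refinement, to a telescoping sum of $g$-values. Since $F$ is continuous on the compact space $\Rbar^2$, it attains a minimum $m$ and maximum $M$, and for every tagged division with tags $(x_i,y_i)\in I_i$ the Riemann--Stieltjes sum obeys
$$m\,\Delta g \;\leq\; \sum_{i=1}^N F(x_i,y_i)\,\mu(I_i) \;\leq\; M\,\Delta g.$$
Refining along $\gamma$-fine tagged divisions and using that the Henstock--Stieltjes integral $\intinf\intinf F\,d_{12}g$ exists for $F\in C(\Rbar^2)$ and $g\in\hkbv(\Rbar^2)$ yields $m\,\Delta g\leq\intinf\intinf F\,d_{12}g\leq M\,\Delta g$. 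If $\Delta g=0$ both sides vanish and any $(\xi,\eta)\in\Rbar^2$ works; otherwise the ratio lies in $[m,M]$, and since $\Rbar^2$ is connected (as a product of the connected spaces $\Rbar$) and $F$ is continuous, the intermediate value theorem produces $(\xi,\eta)\in\Rbar^2$ with $F(\xi,\eta)=\intinf\intinf F\,d_{12}g/\Delta g$.

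For (b), the assumption that $g(x,y)$ vanishes whenever $x$ or $y$ is $\infty$ activates \eqref{hildebrandtA2} of Proposition~\ref{propgdF}, giving $\intinf\intinf fg=\intinf\intinf F\,d_{12}g$. Since $F\in\balextwo\subset C(\Rbar^2)$, part (a) supplies $(\xi,\eta)\in\Rbar^2$ with $\intinf\intinf F\,d_{12}g=F(\xi,\eta)\Delta g$; and because $g(\infty,\infty)=g(-\infty,\infty)=g(\infty,-\infty)=0$, the quantity $\Delta g$ agrees with $g(-\infty,-\infty)+g(\infty,\infty)-g(-\infty,\infty)-g(\infty,-\infty)$, which is the stated form.

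The principal obstacle is the additivity assertion $\sum_i\mu(I_i)=\Delta g$ over an arbitrary division of $\Rbar^2$. Although transparent on a grid subdivision via direct telescoping of the corner values of $g$, the general case requires refining the division to a common grid and checking that all interior grid-line contributions cancel, with due care for the infinite coordinates of $\Rbar^2$. Once this additivity is in hand, the remaining steps are routine analogues of the familiar one-dimensional mean value arguments.
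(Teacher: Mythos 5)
Your proof is correct and follows essentially the same route as the paper: bound the integral between $(\min_{\Rbar^2}F)\Delta$ and $(\max_{\Rbar^2}F)\Delta$ via the nonnegativity and additivity of the corner increments, apply the intermediate value theorem on the connected compact space $\Rbar^2$, and obtain (b) from (a) together with \eqref{hildebrandtA2}. You merely spell out the Riemann-sum additivity step that the paper leaves implicit.
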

\begin{proof}
(a) Let $\Delta =g(-\infty,-\infty)+g(\infty,\infty)-g(-\infty,\infty)-
g(\infty,-\infty)$. The function $\Psi(x,y)=F(x,y)\intinf\intinf d_{12}g$ is continuous.  Both $\Psi$ and
$\intinf\intinf F\,d_{12}g$ have $(\min_{\Rbar^2}F)\Delta$ and $(\max_{\Rbar^2}F)\Delta$ as respective
lower and upper bounds.  Use of the intermediate value theorem completes the proof.
(b) Use part (a) and \eqref{hildebrandtA2}.
\end{proof}
Versions of the first mean value theorem, part (a), are known for Henstock--Stieltjes and 
Lebesgue integrals.  See page~209 in \cite{mcleod} and Problem~6, page~190, in \cite{fleming}.
For the second mean value theorem for one-dimensional Henstock--Kurzweil integrals see \S1.10 in
\cite{celidze} and page~211 in \cite{mcleod}.
See Theorems~6.4.2 and $6.5.13$ in \cite{lee} for $n$-dimensional Henstock--Kurzweil integrals.
See also \cite{young}.

The space $\alextwo$ is invariant under translations, as is the Alexiewicz norm.  We also have continuity
of translations.  If $f\in\Dptwo$ and $(s,t)\in\R^2$ then the translation is defined by
$\langle \tau_{(s,t)}f,\phi\rangle=\langle f,\tau_{(-s,-t)}\phi\rangle$ where $\tau_{(s,t)}\phi(x,y)
=\phi(x-s,y-t)$ for $\phi\in\Dtwo$.
\begin{prop}\label{propalexinvariance}
(a) If $f\in\Dptwo$ then $f\in\alextwo$ if and only if $\tau_{(s,t)}f\in\alextwo$ for all $(s,t)\in\R^2$.
(b) Let $f\in\alextwo$.  Then $\norm{f}=\norm{\tau_{(s,t)}f}$ for all $(s,t)\in\R^2$.
(c) Let $f\in\alextwo$.  Then $\lim_{(s,t)\to(0,0)}\norm{f-\tau_{(s,t)}f}=0$.
\end{prop}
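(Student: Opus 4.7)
The plan is to identify the primitive of $\tau_{(s,t)}f$ explicitly and then exploit the isometric correspondence between $\alextwo$ and $\balextwo$ established in Proposition~\ref{propunique}.

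For (a), suppose $f\in\alextwo$ with primitive $F\in\balextwo$. I would first verify that $\tau_{(s,t)}F\in\balextwo$: translation preserves continuity on $\Rbar^2$ (extending continuously to the boundary), and the boundary-vanishing conditions persist, since $\tau_{(s,t)}F(-\infty,y)=F(-\infty,y-t)=0$ and $\tau_{(s,t)}F(x,-\infty)=F(x-s,-\infty)=0$. I would then check $\dalex\tau_{(s,t)}F=\tau_{(s,t)}f$ via the distributional computation, valid for all $\phi\in\Dtwo$,
\[
\langle\dalex\tau_{(s,t)}F,\phi\rangle=\langle F,\tau_{(-s,-t)}\dalex\phi\rangle=\langle F,\dalex\tau_{(-s,-t)}\phi\rangle=\langle\tau_{(s,t)}\dalex F,\phi\rangle,
\]
where the middle equality uses that $\dalex$ commutes with translation on test functions. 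By Proposition~\ref{propunique}(a), $\tau_{(s,t)}F$ is then the unique primitive of $\tau_{(s,t)}f$, so $\tau_{(s,t)}f\in\alextwo$. The converse follows by applying $\tau_{(-s,-t)}$.

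For (b), using the primitive identified in (a) and the fact that $(x,y)\mapsto(x-s,y-t)$ is a bijection of $\R^2$,
\[
\norm{\tau_{(s,t)}f}=\norm{\tau_{(s,t)}F}_\infty=\sup_{(x,y)\in\R^2}\abs{F(x-s,y-t)}=\sup_{(x,y)\in\R^2}\abs{F(x,y)}=\norm{F}_\infty=\norm{f},
\]
invoking the identity $\sup_{\R^2}\abs{F}=\sup_{\Rbar^2}\abs{F}$ for $F\in C(\Rbar^2)$ recorded in Section~\ref{sectionRbar2}.

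For (c), my plan is a density-plus-isometry argument. Given $\epsilon>0$, Proposition~\ref{propdense}(a) yields $g\in L^1(\R^2)$ with $\norm{f-g}<\epsilon/3$. Classical continuity of translation in the $L^1$ norm (proved by first approximating $g$ uniformly by a $C_c(\R^2)$ function and using its uniform continuity) supplies $\delta>0$ such that $\norm{g-\tau_{(s,t)}g}_1<\epsilon/3$ whenever $\abs{(s,t)}<\delta$. Because $\norm{\cdot}\leq\norm{\cdot}_1$ on $L^1(\R^2)$ (as noted after Proposition~\ref{propseparable}) and $\tau_{(s,t)}$ is an Alexiewicz isometry by (b), the triangle inequality gives
\[
\norm{f-\tau_{(s,t)}f}\leq\norm{f-g}+\norm{g-\tau_{(s,t)}g}+\norm{\tau_{(s,t)}(f-g)}<\tfrac{\epsilon}{3}+\tfrac{\epsilon}{3}+\tfrac{\epsilon}{3}.
\]
The principal technical point is the distributional commutation in (a); once that is secured, (b) is bookkeeping and (c) reduces to a familiar $L^1$ fact leveraged through the isometry. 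A self-contained alternative for (c) would be to observe that $\Rbar^2$ is compact and metrizable (e.g.\ via the metric induced by applying $\arctan$ coordinatewise), so $F$ is uniformly continuous in such a metric, and translations by $(s,t)\in\R^2$ produce perturbations that are uniformly small in this metric as $(s,t)\to(0,0)$, yielding $\norm{F-\tau_{(s,t)}F}_\infty\to 0$ directly.
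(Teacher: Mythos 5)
Your proof is correct and follows exactly the route the paper indicates (the paper itself gives no details, remarking only that ``the proofs are based on the corresponding properties in $\balextwo$''): you identify $\tau_{(s,t)}F$ as the unique primitive of $\tau_{(s,t)}f$ and transfer (a)--(c) through the isometry of Proposition~\ref{propunique}. For (c), your self-contained alternative via uniform continuity of $F$ on the compact metrizable space $\Rbar^2$ is the argument the paper has in mind, while your primary $L^1$-density route also works since $\norm{\cdot}\leq\norm{\cdot}_1$ on $L^1(\R^2)$.
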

The proofs are based on the corresponding properties in $\balextwo$.  See also \cite[Theorem~28]{talviladenjoy}.

\section{H\"older inequality and dual space}\label{sectionholder}
The integration by parts formula, Definition~\ref{defnparts}, leads to a version of the H\"older inequality.  It is known that the
dual space of the Henstock--Kurzweil integrable functions is $\hkbv(\Rbar^2)$ (\cite[\S6.6]{lee}).
The H\"older inequality and density of
the Henstock--Kurzweil integrable functions in $\alextwo$ then show that the dual space of
$\alextwo$ is also $\hkbv(\Rbar^2)$.

\begin{prop}[H\"older inequality]\label{propholder}
Let $f\in\alextwo$ and $g\in\hkbv(\Rbar^2)$.  Then for all $[a,b]\times[c,d]\subseteq\Rbar^2$ and all $(x,y)\in\Rbar^2$,
\begin{eqnarray*}
\left|\intabcd fg\right| & \leq & \norm{f}\left(4\norm{g}_\infty+2\norm{V_1g}_\infty +2\norm{V_2g}_\infty +V_{12}g
\right)\\
\left|\int_{-\infty}^x\int_{-\infty}^y fg\right| & \leq & \norm{f}
\left(\norm{g}_\infty+\norm{V_1g}_\infty +\norm{V_2g}_\infty +V_{12}g\right)=\norm{f}\norm{g}_{bv}.
\end{eqnarray*}
\end{prop}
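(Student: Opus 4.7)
The plan is to read off both inequalities directly from the integration-by-parts formula in Definition~\ref{defnparts} by bounding each of its summands using the standard estimate $\bigl|\int_I H\,d\mu\bigr|\leq \norm{H}_\infty V(\mu;I)$ for a Henstock--Stieltjes integral of a continuous function against a function of bounded variation.

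First I would write $\intabcd fg$ using Definition~\ref{defnparts}. Since $\norm{f}=\norm{F}_\infty$, each of the four boundary terms $F(a,c)g(a,c)$, $F(b,d)g(b,d)$, $F(a,d)g(a,d)$, $F(b,c)g(b,c)$ is dominated in absolute value by $\norm{F}_\infty\norm{g}_\infty$, contributing $4\norm{f}\norm{g}_\infty$ in total. For the four single-variable Stieltjes integrals, I would bound, for example,
\[
\left|\int_a^b F(x,d)\,d_1g(x,d)\right|\leq \norm{F}_\infty\,V_1g(\cdot,d)\leq \norm{f}\norm{V_1g}_\infty,
\]
with the analogous estimate for the three companion terms; the two $d_1g$ integrals and two $d_2g$ integrals together produce $2\norm{f}\norm{V_1g}_\infty+2\norm{f}\norm{V_2g}_\infty$. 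Finally $\bigl|\intabcd F\,d_{12}g\bigr|\leq \norm{F}_\infty V_{12}g\leq \norm{f}\,V_{12}g$. Summing gives the first stated inequality.

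For the second inequality I would specialise to $a=c=-\infty$ and $(b,d)=(x,y)$ and exploit that $F\in\balextwo$ satisfies $F(-\infty,\cdot)=F(\cdot,-\infty)\equiv 0$. Three of the four boundary terms vanish, leaving only $F(x,y)g(x,y)$, which is bounded by $\norm{f}\norm{g}_\infty$. Two of the four single-variable Stieltjes integrals vanish for the same reason (the integrand $F(s,-\infty)$ or $F(-\infty,t)$ is zero), leaving one $d_1g$ term bounded by $\norm{f}\norm{V_1g}_\infty$ and one $d_2g$ term bounded by $\norm{f}\norm{V_2g}_\infty$. The double Stieltjes integral still contributes at most $\norm{f}V_{12}g$. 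Adding the surviving terms gives exactly $\norm{f}\bigl(\norm{g}_\infty+\norm{V_1g}_\infty+\norm{V_2g}_\infty+V_{12}g\bigr)=\norm{f}\norm{g}_{bv}$.

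There is no real obstacle; the only point requiring a little care is the validity of the termwise Stieltjes estimate, which holds because $F$ is continuous on $\Rbar^2$ (hence on each face) and $g$ has finite Vitali and edge variations, so each Henstock--Stieltjes integral in Definition~\ref{defnparts} exists and obeys the bound $|\int F\,d\mu|\le \norm{F}_\infty V(\mu)$ in the same way as in the one-variable Riemann--Stieltjes theory (see the references cited after Definition~\ref{defnparts}).
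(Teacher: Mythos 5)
Your proof is correct and follows exactly the route the paper intends: the paper states Proposition~\ref{propholder} without a written proof, noting only that it "leads" from the integration by parts formula, and your termwise bounding of Definition~\ref{defnparts} (four boundary terms by $\norm{F}_\infty\norm{g}_\infty$, the edge Stieltjes integrals by $\norm{F}_\infty\norm{V_ig}_\infty$, the double Stieltjes integral by $\norm{F}_\infty V_{12}g$, then specialising to $a=c=-\infty$ where $F$ vanishes) is precisely that derivation. The one point of care you flag --- that the Riemann-sum estimate $\abs{\int F\,d\mu}\le\norm{F}_\infty V(\mu)$ carries over to the Henstock--Stieltjes setting --- is handled correctly.
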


We now give two equivalent norms.
\begin{prop}[Equivalent norms]
For $f\in\alextwo$ define $\norm{f}'=\sup_I\abs{\int_I f}$ where the supremum is taken over all
intervals $I\subseteq\Rbar^2$; $\norm{f}''=\sup_{g}\abs{\intinf \intinf fg}$ where the supremum is taken over all
$g\in\hkbv(\Rbar^2)$ such that $\norm{g}_{bv}\leq 1$.
\end{prop}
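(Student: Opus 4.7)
The plan is to show that both $\norm{\cdot}'$ and $\norm{\cdot}''$ are equivalent to the Alexiewicz norm $\norm{\cdot}$, with explicit comparison constants, by exploiting the formula $\int_I f = F(a,c)+F(b,d)-F(a,d)-F(b,c)$ for the primitive $F \in \balextwo$ and the H\"older inequality of Proposition~\ref{propholder}. The main points should fall out of the structure of $\balextwo$ (primitives vanish on the lower boundaries) together with the behaviour of characteristic functions of intervals recorded in Example~\ref{examplegbv} and Remark~\ref{remarkparts}.

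First I would handle $\norm{\cdot}'$. On the one hand, for any interval $I=[a,b]\times[c,d]\subseteq\Rbar^2$ we have $|\int_I f|\leq 4\norm{F}_\infty=4\norm{f}$ directly from Definition~\ref{defnctsprimitiveintegral}, so $\norm{f}'\leq 4\norm{f}$. On the other hand, taking $I=[-\infty,x]\times[-\infty,y]$ and using the fact that $F$ vanishes whenever $x$ or $y$ is $-\infty$ gives $\int_I f = F(x,y)$; taking the supremum over $(x,y)\in\Rbar^2$ yields $\norm{f}\leq \norm{f}'$. This gives the two-sided estimate $\norm{f}\leq\norm{f}'\leq 4\norm{f}$.

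Next I would handle $\norm{\cdot}''$. The upper bound $\norm{f}''\leq\norm{f}$ is immediate from the second inequality in Proposition~\ref{propholder} applied with $x=y=\infty$, which gives $|\intinf\intinf fg|\leq\norm{f}\norm{g}_{bv}\leq\norm{f}$ whenever $\norm{g}_{bv}\leq 1$. For the lower bound, fix $(x,y)\in\Rbar^2$ and set $g=\tfrac{1}{4}\chi_{[-\infty,x)\times[-\infty,y)}$. By Example~\ref{examplegbv} this $g$ lies in $\hkbv(\Rbar^2)$ with $\norm{g}_{bv}=1$, and by Remark~\ref{remarkparts} the integration by parts definition of $\intinf\intinf fg$ reduces to $\tfrac{1}{4}\int_{-\infty}^x\int_{-\infty}^y f=\tfrac{1}{4}F(x,y)$. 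Therefore $|F(x,y)|/4\leq\norm{f}''$ for every $(x,y)$, and taking the supremum gives $\norm{f}/4\leq\norm{f}''$. Combining, $\tfrac14\norm{f}\leq\norm{f}''\leq\norm{f}$.

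There is no real obstacle here; the only point requiring minor care is the justification that $g=\chi_{[-\infty,x)\times[-\infty,y)}$ is an admissible test function in the supremum defining $\norm{\cdot}''$ and that Definition~\ref{defnparts} reproduces the expected value $\int_{-\infty}^x\int_{-\infty}^y f$ on such a $g$. Both are already settled by Example~\ref{examplegbv} and Remark~\ref{remarkparts}, so the proof is essentially a bookkeeping argument. The two chains of inequalities immediately yield $\tfrac{1}{4}\norm{f}\leq\norm{f}''\leq\norm{f}\leq\norm{f}'\leq 4\norm{f}$, establishing that $\norm{\cdot}$, $\norm{\cdot}'$ and $\norm{\cdot}''$ are mutually equivalent norms on $\alextwo$.
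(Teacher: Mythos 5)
Your proposal is correct and follows essentially the same route as the paper: the bound $\norm{f}'\leq 4\norm{f}$ via the four corner values of $F$ (the paper phrases this as a decomposition into four lower-corner integrals, which is the same thing), the upper bound $\norm{f}''\leq\norm{f}$ from the H\"older inequality, and the lower bound via $g=\tfrac14\chi_{[-\infty,x)\times[-\infty,y)}$ using Example~\ref{examplegbv}. The only cosmetic difference is that the paper picks a point where $\abs{F(x,y)}$ attains $\norm{f}$ while you take a supremum over all $(x,y)$; both work.
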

\begin{proof}
Since the characteristic function of an interval is of bounded variation integration by parts establishes
existence of $\norm{\cdot}'$. Clearly, $\norm{f}\leq\norm{f}'$.  And, there is the decomposition into
nonoverlapping intervals, 
$$
\intabcd f=\int_{-\infty}^a\int_{-\infty}^cf + \int_{-\infty}^b\int_{-\infty}^df
-\int_{-\infty}^a\int_{-\infty}^df-\int_{-\infty}^b\int_{-\infty}^cf,
$$
so that $\norm{f}'\leq 4\norm{f}$.

If $g\in\hkbv(\Rbar^2)$ with $\norm{g}_{bv}\leq 1$ then the H\"older inequality (Proposition~\ref{propholder}) 
establishes $\norm{f}''\leq \norm{f}$.  For a reverse inequality note there is
$(x,y)\in\Rbar^2$ such that $\abs{\int_{-\infty}^x\int_{-\infty}^y f}=\norm{f}$.
Let $g=(1/4)\chi_{[-\infty,x)\times[-\infty,y)}$.  Then $\norm{g}_{bv}=1$ (Example~\ref{examplegbv}). And,
$$
\norm{f}''\geq \left|\intinf fg\right|=
\frac{1}{4}\left|\int_{-\infty}^x\int_{-\infty}^y f\right|=
\frac{\norm{f}}{4}.
$$
Hence, $\norm{f}/4\leq\norm{f}''$.
\end{proof}

The integral $\intinf\intinf fg$ is not changed when
$g$ is changed on a coordinate line.
\begin{prop}\label{propbvline}
Let $f\in\alextwo$ and $g\in\hkbv(\Rbar^2)$.  
If $g$ is changed on a coordinate line the
integral $\intinf\intinf fg$ is not changed.
\end{prop}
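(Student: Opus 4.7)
The plan is to reduce to showing $\intinf\intinf fh=0$ where $h=\tilde g-g$ is supported on one coordinate line, and then to compute each of the four terms in Definition~\ref{defnparts} applied to $fh$ and check that they vanish or cancel. Since the right side of Definition~\ref{defnparts} depends linearly on $g$, the reduction is immediate. By the symmetry in $x$ and $y$ we may assume $h$ vanishes off the vertical line $\{x_0\}\times\Rbar$ for some $x_0\in\Rbar$, so that $h(x,y)=k(y)\chi_{\{x_0\}}(x)$ with $k(y):=h(x_0,y)$; the condition $\norm{V_2h}_\infty<\infty$ forces $k\in\bv(\Rbar)$.

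Consider first the generic case $x_0\in\R$. The identity $h(\infty,\cdot)\equiv 0$ zeroes both the corner term $F(\infty,\infty)h(\infty,\infty)$ and the boundary integral $\intinf F(\infty,y)\,d_2h(\infty,y)$. The remaining boundary integral $\intinf F(x,\infty)\,d_1h(x,\infty)$ is a one-dimensional Henstock--Stieltjes integral of the continuous integrand $F(\cdot,\infty)$ against an integrator concentrated at the single point $x_0$; the standard gauge argument (force $x_0$ to tag any interval containing it) shows this integral is zero.

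The substantive task is to show $\intinf\intinf F\,d_{12}h=0$. Since $F$ is continuous on the compact space $\Rbar^2$, this Henstock--Stieltjes integral agrees with the Riemann--Stieltjes integral (as remarked after Definition~\ref{defnparts}). I work with a rectangular grid division of $\Rbar^2$ having $x_0$ as one of its vertical partition lines: every interval not flanking the line $x=x_0$ has all four corners off the line and so contributes zero increment, while in each row $[q_{j-1},q_j]$ the two contributions from the adjacent columns combine as
\[
\bigl[F(\xi_L,\eta_j^L)-F(\xi_R,\eta_j^R)\bigr]\bigl[k(q_j)-k(q_{j-1})\bigr],
\]
with $\xi_L,\xi_R$ both approaching $x_0$ as the mesh shrinks and $\eta_j^L,\eta_j^R\in[q_{j-1},q_j]$. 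Uniform continuity of $F$ on $\Rbar^2$ makes each bracketed difference arbitrarily small, so the whole sum is bounded by $\epsilon\,Vk$ and vanishes in the limit.

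The boundary cases are handled similarly. For $x_0=-\infty$ every occurrence of $F$ in the four terms either equals zero directly or has its first argument approaching $-\infty$, where $F\in\balextwo$ forces $F=0$, so each of the four terms is individually zero. For $x_0=\infty$ the four terms pair off: the corner term and the first boundary integral both equal $F(\infty,\infty)k(\infty)$, while the second boundary integral and the 2D Riemann--Stieltjes integral both equal $\intinf F(\infty,y)\,dk(y)$, and the signs in Definition~\ref{defnparts} produce exact cancellation. The principal obstacle is the row-by-row pairing for the 2D Riemann--Stieltjes integral in the generic case: one must control $F(\xi_L,\eta_j^L)-F(\xi_R,\eta_j^R)$ by a single uniform-continuity modulus of $F$, which is what forces the restriction to rectangular grid partitions with a common $y$-partition on both sides of the line $x=x_0$, an option legitimized by the equivalence of Stieltjes integrals for continuous $F$.
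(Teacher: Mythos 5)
Your proof is correct, but it is organized differently from the paper's. You keep the Stieltjes differentials on the perturbation $h=\tilde g-g$ and check directly that the four terms of Definition~\ref{defnparts} applied to $fh$ vanish or cancel; the decisive step is the column-pairing estimate $\abs{\sum_j[F(\xi_L,\eta_j^L)-F(\xi_R,\eta_j^R)][k(q_j)-k(q_{j-1})]}\le\epsilon\, V\!k$, which you legitimize by passing to net-type Riemann--Stieltjes sums over grid partitions having $x_0$ as a node (permissible since, for continuous $F$, the Stieltjes integrals coincide). The paper instead first transfers the differential onto $F$ via Proposition~\ref{propgdF}, reducing everything to the single integral $\intinf\intinf h\,d_{12}F$, and then estimates its Riemann sums by summation by parts: the gauge forces the tags of straddling intervals onto the line and the sum telescopes into $\sum[\psi(z_i)-\psi(z_{i+1})][F(x_i,\infty)-F(x_i,y)]$, bounded by $V\!\psi\cdot\sup\abs{F(x,\infty)-F(x,y)}$. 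The two computations are dual: yours avoids Proposition~\ref{propgdF} but must track the three boundary terms and their exact cancellations in the cases $x_0=\pm\infty$, while the paper's single-integral reduction treats interior and boundary lines more uniformly at the cost of the Abel-summation step. Both rest on the same two ingredients --- uniform continuity of $F$ on the compact space $\Rbar^2$ and finiteness of the variation of $h$ along the supporting line --- so neither argument is stronger than the other; yours has the modest additional virtue of being self-contained relative to Definition~\ref{defnparts}.
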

Note that this includes the result that if $(s,t)\in\Rbar^2$ and $g=\chi_{(s,t)}$ then $\intinf\intinf fg=0$
for all $f\in\alextwo$.
\begin{proof}
Let $f\in\alextwo$ with primitive $F\in\balextwo$.

First show that if $g$ is the characteristic function of a point then $\intinf\intinf fg=0$.
Let $g=\chi_{(s,t)}$.  Then $\intinf\intinf fg=\intinf\intinf g\,d_{12}F$ by \eqref{hildebrandtA2} or
\eqref{hildebrandtB2}.
A gauge $\gamma$ can always be chosen so that if interval $I=[a,b]\times[c,d]$ is in a $\gamma$-fine
tagged division and $(s,t)\in I$ then its tag is $(s,t)$.
The only terms in a Riemann sum that do not necessarily
vanish are $g(s,t)[F(a,c)+F(b,d)-F(a,d)-F(b,c)]$ but the gauge can force this term to be arbitrarily
small due to the uniform continuity of $F$.  There can be at most four such terms.  Hence,
$\intinf\intinf fg=0$.

Now consider 
$$
g(x,y)=\left\{\begin{array}{cl}
\psi(x); & x\in\R, y=\infty\\
0; & \text{else}
\end{array}
\right.
$$
where $\psi\fn\Rbar\to\R$ is of bounded variation and $\psi(\pm\infty)=0$.
We again have $\intinf\intinf fg=\intinf\intinf g\,d_{12}F$.
Given a gauge $\gamma$ we can always choose a $\gamma$-fine
tagged division so that there is a point $y\in\R$ such that
if $I$ is an interval in the tagged division  then
$I=[x_{i-1},x_i]\times[y,\infty]$ with associated tag $(z_i,\infty)$ for which
$x_{i-1}\leq z_i\leq x_i$.  And, there is $N\in\N$ so that $-\infty=x_0<x_1<\ldots<x_N=\infty$.  We then
have a partition of the line $\{(s,\infty)\in\Rbar^2\mid s\in\Rbar\}$.  We can assume $z_1=-\infty$
and $z_N=\infty$.  The terms that do not necessarily
vanish in a Riemann
sum for such a division are
\begin{align*}
&\sum_{i=1}^N\psi(z_i)[F(x_{i-1},y)+F(x_i,\infty)-F(x_{i-1},\infty)-F(x_i,y)]\\
&=\sum_{i=1}^{N-1}\psi(z_i)[F(x_i,\infty)-F(x_i,y)]+\sum_{i=1}^{N-1}\psi(z_{i+1})[F(x_{i},y)-F(x_i,\infty)]\\
&=\sum_{i=1}^{N-1}[\psi(z_i)-\psi(z_{i+1})][F(x_{i},\infty)-F(x_i,y)].
\end{align*}
The Riemann sum is then bounded by $V\!\psi\,\sup_{x,y\in\R}\abs{F(x,\infty)-F(x,y)}$.  Since $F$ is uniformly
continuous this can be made arbitrarily small by choosing $\gamma$ to force $y$ close enough to $\infty$.
Hence, $\intinf fg=0$.  

Changing $g$ on other lines is handled similarly.
\end{proof}

To discuss the dual space of $\alextwo$ we need to define normalisations for functions of bounded variation.
Fix $\alpha_{--}, \alpha_{++}, \alpha_{-+}, \alpha_{+-}\in[0,1]$ such that 
$\alpha_{--}+ \alpha_{++}+ \alpha_{-+}+ \alpha_{+-}=1$.  If $g\in\hkbv(\Rbar^2)$ then 
define it's normalisation ${\tilde g}$ as follows.
For $(x,y)\in\R^2$ put ${\tilde g}(x,y)=\alpha_{--}\lim_{(s,t)\to(x-,y-)}g(s,t)+\alpha_{++}\lim_{(s,t)\to(x+,y+)}g(s,t)+
\alpha_{-+}\lim_{(s,t)\to(x-,y+)}g(s,t)+\alpha_{+-}\lim_{(s,t)\to(x+,y-)}g(s,t)$. This involves changing $g$ on a set
that is at most countable.
There is a similar procedure
on the boundary of $\R^2$.  For example, fix $\beta_-,\beta_+\in[0,1]$ such that $\beta_{-}+\beta_{+}=1$.
For $y\in\R$ we define 
${\tilde g}(\infty,y)=\beta_{-}\lim_{(s,t)\to(\infty,y-)}g(s,t)+\beta_{+}\lim_{(s,t)\to(\infty,y+)}g(s,t)$.  
A single limit is required at each of the four corner points of $\Rbar^2$.

Finally, we have a characterisation of the dual space of $\alextwo$.  It is clear from Proposition~\ref{propbvline}
that if two elements of the dual space differ only on a coordinate line then they represent the same dual space
element.  This is dealt with by fixing a normalisation on $\hkbv(\Rbar^2)$.
\begin{prop}[Dual space]
Fix any normalisation on $\hkbv(\Rbar^2)$.  The dual space of $\alextwo$ is $\alextwo'=\hkbv(\Rbar^2)$.
\end{prop}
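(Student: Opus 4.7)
The plan is to establish the identification in two directions, using the H\"older inequality for the easy direction and the Riesz--Markov representation theorem together with the isometric isomorphism $\dalex\colon\balextwo\to\alextwo$ for the hard direction. Throughout, Proposition~\ref{propbvline} guarantees that the choice of normalisation is irrelevant at the level of the dual action, so we are free to extract any representative $g$ and then pass to its fixed normalisation without changing the functional it induces.

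For the easy direction, each $g\in\hkbv(\Rbar^2)$ determines a linear functional $\Lambda_g(f)=\intinf\intinf fg$ on $\alextwo$. Proposition~\ref{propholder} gives $\abs{\Lambda_g(f)}\leq\norm{f}\norm{g}_{bv}$, so $\Lambda_g\in\alextwo'$ with $\norm{\Lambda_g}_{\alextwo'}\leq\norm{g}_{bv}$, and $g\mapsto\Lambda_g$ is linear and, after fixing a normalisation, injective by Proposition~\ref{propbvline}.

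For the hard direction, let $T\in\alextwo'$. Pull $T$ back via the isometric isomorphism $\dalex$ of Proposition~\ref{propunique} to get a continuous linear functional $S$ on $\balextwo$ defined by $S(F)=T(\dalex F)$, with $\norm{S}=\norm{T}$ in the uniform norm. Since $\balextwo$ is a closed subspace of $C(\Rbar^2)$, Hahn--Banach extends $S$ norm-preservingly to $\tilde S\in C(\Rbar^2)'$. Because $\Rbar^2$ is compact Hausdorff, the Riesz--Markov theorem yields a finite signed regular Borel measure $\mu$ on $\Rbar^2$ with $\tilde S(F)=\int_{\Rbar^2}F\,d\mu$ for all $F\in C(\Rbar^2)$ and $\abs{\mu}(\Rbar^2)=\norm{T}$. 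Now define the candidate
\[
g(x,y)=\mu\bigl([x,\infty]\times[y,\infty]\bigr),
\]
adjusted on coordinate lines to match the fixed normalisation. The marginal functions $g(\cdot,y_0)$ and $g(x_0,\cdot)$ are, up to sign, the distribution functions of the marginal signed measures obtained by freezing one variable, so their one-variable variations are bounded by $\abs{\mu}(\Rbar^2)$. Similarly $V_{12}g\leq\abs{\mu}(\R^2)$, coming from additivity of $\mu$ on disjoint rectangles. Hence $g\in\hkbv(\Rbar^2)$ with $\norm{g}_{bv}$ controlled by a constant multiple of $\norm{T}$.

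It remains to verify $T(f)=\intinf\intinf fg$ for every $f\in\alextwo$ with primitive $F\in\balextwo$. Since $g$ vanishes when $x$ or $y$ equals $\infty$, equation \eqref{hildebrandtA2} of Proposition~\ref{propgdF} gives $\intinf\intinf fg=\intinf\intinf F\,d_{12}g$, and the task reduces to identifying the Henstock--Stieltjes integral $\intinf\intinf F\,d_{12}g$ with the Riesz measure integral $\int_{\Rbar^2}F\,d\mu=\tilde S(F)=S(F)=T(f)$. For continuous $F$ and a function $g$ that is the distribution function of the signed Borel measure $\mu$, this identification is classical once one approximates $F$ uniformly by step functions and compares Riemann--Stieltjes sums for $\gamma$-fine tagged divisions with the measure-theoretic integral on rectangles, using the equivalence of Stieltjes integrals for continuous integrands cited via \cite{hildebrandt}, \cite{mcleod}, \cite{hanungtvrdy}. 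The main obstacle will be precisely this step: handling the contributions of $\mu$ on the boundary $\partial\R^2$ and the coordinate lines in $\Rbar^2\setminus\R^2$, where the definition of $g$ is sensitive to the normalisation and where one must invoke Proposition~\ref{propbvline} to absorb the ambiguity. Once this identification is settled, combining $\norm{\Lambda_g}\leq\norm{g}_{bv}$ with $\norm{g}_{bv}\lesssim\norm{T}$ shows the bijection $g\leftrightarrow\Lambda_g$ is a topological isomorphism $\hkbv(\Rbar^2)\cong\alextwo'$.
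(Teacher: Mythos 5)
Your route is genuinely different from the paper's: the paper handles the hard direction by citing the known representation of the dual of the Henstock--Kurzweil integrable functions (\cite{lee}, \S 6.6) and transferring it to $\alextwo$ by density, whereas you prove the representation from scratch by pulling $T$ back through the isometry $\dalex$, extending by Hahn--Banach to $C(\Rbar^2)'$, and invoking Riesz--Markov. That skeleton is sound, and your estimates $\norm{V_1g}_\infty,\norm{V_2g}_\infty, V_{12}g\leq\abs{\mu}(\Rbar^2)$ for $g(x,y)=\mu([x,\infty]\times[y,\infty])$ are correct.

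The gap is in the final identification. You assert that $g$ vanishes when $x$ or $y$ equals $\infty$ and then reduce to \eqref{hildebrandtA2}. But $g(\infty,y)=\mu(\{\infty\}\times[y,\infty])$ and $g(x,\infty)=\mu([x,\infty]\times\{\infty\})$, and nothing forces the Hahn--Banach/Riesz measure $\mu$ to avoid the upper and right edges of $\Rbar^2$. Concretely, the bounded functional $T(f)=\int_{-\infty}^{\infty}\int_{-\infty}^{0}f=F(\infty,0)$ corresponds to $\mu=\delta_{(\infty,0)}$, for which $g(x,y)=\chi_{[-\infty,0]}(y)$; then $d_{12}g=0$ and \eqref{hildebrandtA2} would give $\intinf\intinf fg=0\neq F(\infty,0)$, so that step fails. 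The repair is to use the full Definition~\ref{defnparts}: its one-dimensional boundary terms $-\intinf F(x,\infty)\,d_1g(x,\infty)$ and $-\intinf F(\infty,y)\,d_2g(\infty,y)$, together with $F(\infty,\infty)g(\infty,\infty)$, are exactly what recover the mass of $\mu$ on $\Rbar\times\{\infty\}$, on $\{\infty\}\times\Rbar$ and at $(\infty,\infty)$, while $\intinf\intinf F\,d_{12}g$ accounts for $\mu$ restricted to $[-\infty,\infty)^2$, since the Vitali increment of your $g$ over $[a,b]\times[c,d]$ is $\mu([a,b)\times[c,d))$. With that correction, and the still-to-be-written (standard, but not trivial) identification of the Henstock--Stieltjes integrals of a continuous integrand with the corresponding Lebesgue--Stieltjes integrals, your argument closes; what it buys over the paper's proof is independence from the compact-interval duality theorem quoted from \cite{lee}.
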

\begin{proof}
The H\"older inequality shows that every function of bounded variation generates
a bounded linear functional on $\alextwo$ via $f\mapsto \intinf\intinf fg$ ($f\in\alextwo$,
$g\in\hkbv(\Rbar^2)$).  And, in \cite{lee}, Section~6.6, it is shown
that each element of the dual space of the Henstock--Kurzweil integrable functions is given by 
integration against a function of bounded
variation that vanishes on the boundary.  (This is done on a compact interval but the
proof extends immediately to $\Rbar^2$.)  Since the space of Henstock--Kurzweil integrable functions is
dense in $\alextwo$ (Proposition~\ref{propseparable}) this shows that the dual space of $\alextwo$ is also
$\hkbv(\Rbar^2)$.  By Proposition~\ref{propbvline} we get the same result for our given normalisation.
\end{proof}

Note that each normalisation on $\hkbv(\Rbar^2)$ gives an isometrically isomorphic representation of the dual space.
Equivalently, we can say the dual space is the set of functions of {\it essential bounded variation}.  This
is the set of equivalence classes of functions agreeing almost everywhere with a function of bounded
variation.  Choosing a normalisation merely selects one element of each equivalence class.
It is often misstated in the literature that the dual space of the continuous functions on the real line is
$\bv$ (including in \cite{talviladenjoy}) but the dual space is more properly given as 
normalised bounded variation
or essential bounded variation.
The formulas in Definition~\ref{defnparts} and Proposition~\ref{propgdF} are not defined if $g$ is of essential
bounded variation but the integral $\intinf fg$ can computed using sequences of $L^1$ functions as in
Proposition~\ref{propPhin}.

\section{Convergence theorems}\label{sectionconvergence}
A number of convergence theorems are given in \cite{ang} and \cite{talviladenjoy} that can be extended to 
$\alextwo$, including a necessary
and sufficient condition for interchanging limits and integrals.  The required changes are minor so we
do not present them here.  Instead, we give the convergence theorem that seems to be the most useful
in practice.  It refers to limits of $\intinf\intinf fg_n$ where $\{g_n\}$ is a sequence of functions of bounded
variation.  This can occur, for example, in a convolution product.  See \cite{talvilaheat} for an
application on the real line.
\begin{prop}\label{propconvergence}
Let $f\in\alextwo$.  Let $\{g_n\}\subset\hkbv(\Rbar^2)$ such that $\norm{g_n}_{bv}$ is bounded and
$\lim_{n\to\infty}g_n=g$ pointwise on $\Rbar^2$ for a function
$g\fn\Rbar^2\to\R$.  Then $g\in\hkbv(\Rbar^2)$ and $\lim_{n\to\infty}\intinf\intinf fg_n=\intinf\intinf fg$.
\end{prop}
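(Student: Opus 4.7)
My plan has two stages: first establish $g\in\hkbv(\Rbar^2)$ via lower semicontinuity of variation under pointwise convergence, then prove the integral convergence by combining density of smooth functions with the H\"older inequality and Lebesgue dominated convergence. For the first stage, each of $\norm{\cdot}_\infty$, $V_1(\cdot,y_0)$, $V_2(x_0,\cdot)$ and $V_{12}$ is the supremum of finite sums of absolute values of linear combinations of pointwise function values, so pointwise convergence $g_n\to g$ lets one pass the limit inside any single such finite sum. Taking suprema then gives lower semicontinuity of each variation, so every variation of $g$ is bounded by $\liminf_n$ of the corresponding variation of $g_n$. With $M:=\sup_n\norm{g_n}_{bv}<\infty$ this yields $\norm{g}_{bv}\leq M$, and in particular $g\in\hkbv(\Rbar^2)$, so that $\intinf\intinf fg$ is well-defined via Definition~\ref{defnparts}.

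For the second stage, given $\epsilon>0$ I would use Proposition~\ref{propdense}(b) to pick a test function $\phi\in\Dtwo$ with $\norm{f-\phi}<\epsilon$, and split $\intinf\intinf f(g_n-g)=\intinf\intinf(f-\phi)(g_n-g)+\intinf\intinf\phi(g_n-g)$. The H\"older inequality (Proposition~\ref{propholder}) bounds the first integral by a universal constant $C$ times $\norm{f-\phi}\,\norm{g_n-g}_{bv}\leq 2CM\epsilon$, uniformly in $n$. For the second integral the primitive $\Phi$ of $\phi$ is $C^\infty$ on $\R^2$, extends continuously to $\Rbar^2$, and has smooth boundary slices at infinity; classical integration by parts applied to the one-dimensional boundary Stieltjes integrals in Definition~\ref{defnparts}, combined with the identity $d_{12}\Phi=\phi\,dx\,dy$, reduces $\intinf\intinf\phi h$ for $h\in\hkbv(\Rbar^2)$ to the ordinary Lebesgue integral $\intinf\intinf\phi(x,y)h(x,y)\,dx\,dy$. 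Since $\norm{g_n}_\infty\leq\norm{g_n}_{bv}\leq M$ and $g_n\to g$ pointwise, Lebesgue dominated convergence then gives $\intinf\intinf\phi(g_n-g)\to 0$, and the standard three-$\epsilon$ argument (let $n\to\infty$, then $\epsilon\to 0$) completes the proof.

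The main obstacle I anticipate is the identification of the integration-by-parts integral with the ordinary Lebesgue integral when the left factor is a test function. It is essentially a bookkeeping exercise using \eqref{hildebrandtA1} and the smoothness of $\Phi$, but one must check carefully that the boundary Stieltjes terms in Definition~\ref{defnparts} cancel against the boundary contributions produced by integrating $h$ against $d\Phi$ on the faces at infinity. An alternative route, which sidesteps this identification, is to apply Definition~\ref{defnparts} directly and reduce the problem to a two-dimensional Helly--Bray convergence theorem for Stieltjes integrals of the continuous function $F$ against integrators $g_n$ of uniformly bounded Hardy--Krause variation converging pointwise to $g$.
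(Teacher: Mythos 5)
Your proof is correct in outline, and it takes a genuinely different route from the paper's. The first stage (lower semicontinuity of $\norm{\cdot}_\infty$, $V_1$, $V_2$, $V_{12}$ under pointwise convergence, giving $g\in\hkbv(\Rbar^2)$ with $\norm{g}_{bv}\leq M$) is essentially what the paper does, written as an explicit limit interchange on fixed finite configurations. The second stage diverges: the paper reduces to $g_n\to 0$ by linearity, applies Definition~\ref{defnparts} directly, and handles the three Stieltjes integrals of the fixed continuous primitive $F$ against $d g_n$ by a Helly--Bray argument --- uniform continuity of $F$ yields a gauge for which Riemann sums approximate each integral with error at most $\epsilon V_{12}g_n\leq\epsilon M$ uniformly in $n$, and each Riemann sum involves only finitely many values of $g_n$, hence tends to $0$. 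That is precisely the ``alternative route'' you mention in your last sentence. Your main route instead approximates $f$ by a test function $\phi$ in the Alexiewicz norm, controls $\intinf\intinf(f-\phi)(g_n-g)$ uniformly in $n$ by Proposition~\ref{propholder} together with $\norm{g_n-g}_{bv}\leq 2M$, and disposes of $\intinf\intinf\phi(g_n-g)$ by dominated convergence. The one step you rightly flag --- that $\intinf\intinf\phi h$ as defined by Definition~\ref{defnparts} equals the Lebesgue integral of the pointwise product --- does close: combining Definition~\ref{defnparts} over $\Rbar^2$ with \eqref{hildebrandtA1} and one-dimensional integration by parts on the two boundary integrals (using $\Phi(-\infty,\cdot)=\Phi(\cdot,-\infty)=0$) collapses everything to $\intinf\intinf h\,d_{12}\Phi$, and since $\Phi$ is the primitive of $\phi\in C_c^\infty$ while $h$ is bounded and continuous off countably many coordinate lines, this Stieltjes integral is the Lebesgue integral $\intinf\intinf h\phi$. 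What each approach buys: the paper's is self-contained in the Stieltjes framework, uses only pointwise convergence at the finitely many division points, and immediately gives convergence on every subinterval; yours is shorter modulo the consistency lemma, trades uniform continuity of $F$ for the density theorem (Proposition~\ref{propdense}) and classical dominated convergence, and makes transparent why a uniform $bv$ bound is the right hypothesis (it is exactly what makes the H\"older error term uniform in $n$).
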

\begin{proof}
We can write $\norm{g_n}_{bv}\leq M$.

To prove $g$ is bounded note that
$$
\abs{g(x,y)}\leq \abs{g(x,y)-g_n(x,y)}+\abs{g_n(x,y)}\leq \abs{g(x,y)-g_n(x,y)}+M.
$$
Now let $n\to\infty$.

Fix a finite collection of nonoverlapping intervals
$\{[a_i,b_i]\times[c_i,d_i]\}_{i=1}^N$.  We have the inequality
\begin{align}
&\sum_{i=1}^N\abs{g(a_i,c_i)+g(b_i,d_i)-g(a_i,d_i)-g(b_i,c_i)}\notag\\
&\qquad\leq\sum_{i=1}^N\abs{g(a_i,c_i)-g_n(a_i,c_i)}+\sum_{i=1}^N\abs{g(b_i,d_i)-g_n(b_i,d_i)}\label{convergence1}\\
&\qquad+\sum_{i=1}^N\abs{g(a_i,d_i)-g_n(a_i,d_i)}+\sum_{i=1}^N\abs{g(b_i,c_i)-g_n(b_i,c_i)}\label{convergence2}\\
&\qquad+\sum_{i=1}^N\abs{g_n(a_i,c_i)+g_n(b_i,d_i)-g_n(a_i,d_i)-g_n(b_i,c_i)}.\label{convergence3}
\end{align}
For these fixed finite sums we can take $n$ large enough so that the sums in \eqref{convergence1}
and \eqref{convergence2} contribute less than any prescribed $\epsilon>0$.  The sum in \eqref{convergence3}
is always less than $M$.  Hence, $V_{12}g<\infty$. 

To show $V_1g(\cdot,y_0)$ is finite for some $y_0\in\Rbar$ write
\begin{align*}
&\sum_{i=1}^N\abs{g(x_i,y_0)-g(x_{i-1},y_0)}\\
&\leq \sum_{i=1}^N\abs{g(x_i,y_0)-g_n(x_i,y_0)}+\sum_{i=1}^N\abs{g(x_{i-1},y_0)-g_n(x_{i-1},y_0)}
+V_1g_n(\cdot,y_0).
\end{align*}
As above, we can take $n$ large enough to make these last two sums small.  Similarly with $V_2g(x_0,\cdot)$.
Hence, $g\in\hkbv(\Rbar^2)$.

By linearity of the integral we can assume $g_n\to 0$.  Integration by parts then gives
\begin{eqnarray}
\intinf\intinf fg_n & = & F(\infty,\infty)g_n(\infty,\infty)-\intinf F(s,\infty)\,d_1g_n(s,\infty)\notag\\
 & & \qquad-\intinf F(\infty,t)\,d_2g_n(\infty,t)
+\intinf\intinf F(s,t)\,d_{12}g_n(s,t).\label{convergence}
\end{eqnarray}
The term $F(\infty,\infty)g_n(\infty,\infty)\to 0$.  To show the last integral above tends to $0$
we use the method in \cite[p.~126]{monteiroslaviktvrdy}.  Let $\epsilon>0$.  
Since $F$ is uniformly continuous, we can take a gauge $\gamma$ so that for each interval $I_i$ in a
$\gamma$-fine tagged division, if $(x,y)$ and $(s,t)$ are points in $I_i$ then $\abs{F(x,y)-F(s,t)}
<\epsilon$.  Now suppose $\{[a_i,b_i]\times[c_i,d_i],(x_i,y_i)\}_{i=1}^N$ is a $\gamma$-fine tagged
division of $\Rbar^2$.  Let $\Delta_ig_n=g_n(a_i,c_i)+g_n(b_i,d_i)-g_n(a_i,d_i)-g_n(b_i,c_i)$ and 
$I_i=[a_i,b_i]\times[c_i,d_i]$.  Then
\begin{align*}
&\left|\intinf\intinf F(x,y)\,d_{12}g_n(x,y)-\sum_{i=1}^NF(x_i,y_i)\Delta_ig_n\right|\\
&=\left|\sum_{i=1}^N\left[\int_{I_i}F(x,y)\,d_{12}g_n(x,y)-F(x_i,y_i)\int_{I_i}d_{12}g_n(x,y)\right]\right|\\
&=\left|\sum_{i=1}^N\int_{I_i}\left[F(x,y)-F(x_i,y_i)\right]\,d_{12}g_n(x,y)\right|\\
&\leq \epsilon V_{12}g_n\leq \epsilon M.
\end{align*}
Therefore, for a fixed tagged division the Riemann sums approximate the integral uniformly in $n$.
But the Riemann sums tend to $0$ as $n\to\infty$ since $g_n\to 0$.  Similarly, for the other two
integrals in \eqref{convergence}.
\end{proof}
Note that we also get convergence on every subinterval of $\Rbar^2$.

\section{Change of variables}\label{sectionchange}
If $V$ and $W$ are open sets in $\R^n$ a typical change of variables theorem for $L^1$ functions is
that $\int_Wf\,d\lambda=\int_V(f\circ T)\abs{\det J_T}\,d\lambda$, where
$T\fn V\to W$ is a diffeomorphism, $J_T$ is the Jacobian and $f\in L^1(W)$.  For a proof see \cite{folland}.

For the continuous primitive integral on the real line the following theorem appears in 
\cite{talviladenjoy}:
\begin{theorem}
Suppose $f\in\alex$ and $F'=f$ where $F\in C(\Rbar)$.  Let
$-\infty\leq a<b\leq\infty$. If
$G\in C([a,b])$  then 
$$
\int_{G(a)}^{G(b)} f  =  \int_a^b (f\circ G)\,G'=
(F\circ G)(b) -(F\circ G)(a).
$$
If $G\in C((a,b))$ and $\lim_{t\to a^+}G(t)=-\infty$ and
$\lim_{t\to b^-}G(t)=\infty$ then 
$$
\intinf f = \int_a^b(f\circ G)\,G'
=F(\infty)-F(-\infty).
$$
\end{theorem}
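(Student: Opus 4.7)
The plan is to exploit the fact that in the continuous primitive integral, everything is read off from the primitive via the fundamental theorem of calculus. The crucial observation is that $H := F \circ G$ is continuous wherever $G$ is continuous, since $F \in C(\Rbar)$.

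For the first assertion, the outer equality is essentially by definition. The one-dimensional analogue of Definition~\ref{defnctsprimitiveintegral} says $\int_\alpha^\beta f = F(\beta) - F(\alpha)$ for any $[\alpha,\beta] \subseteq \Rbar$; taking $\alpha = G(a)$ and $\beta = G(b)$ gives $\int_{G(a)}^{G(b)} f = F(G(b)) - F(G(a)) = (F\circ G)(b) - (F\circ G)(a)$. For the middle equality I would interpret $(f\circ G)\,G'$ as the distributional derivative of $F\circ G$; this is the distributional chain rule taken as a definition in this setting. Explicitly, with $H = F\circ G \in C([a,b])$, normalize to an element $\tilde H \in \balex$ (for instance $\tilde H \equiv 0$ on $[-\infty,a]$, $\tilde H(t) = H(t)-H(a)$ on $[a,b]$, and $\tilde H \equiv H(b)-H(a)$ on $[b,\infty]$). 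By the one-dimensional analogue of Proposition~\ref{propunique}(c), $\tilde H' \in \alex$, and the fundamental theorem yields $\int_a^b \tilde H' = \tilde H(b) - \tilde H(a) = H(b) - H(a)$, as required.

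For the second statement, the hypotheses that $G(t)\to -\infty$ as $t\to a^+$ and $G(t)\to\infty$ as $t\to b^-$, combined with continuity of $F$ at $\pm\infty$, give $\lim_{t\to a^+}(F\circ G)(t) = F(-\infty)$ and $\lim_{t\to b^-}(F\circ G)(t) = F(\infty)$. Define $\tilde H$ on $[a,b]$ by $\tilde H(t) = F(G(t)) - F(-\infty)$ on $(a,b)$, and $\tilde H(a) = 0$, $\tilde H(b) = F(\infty) - F(-\infty)$; this is continuous on the closed interval. Extend to an element of $\balex$ as above and run the same argument. The value is $\tilde H(b) - \tilde H(a) = F(\infty) - F(-\infty)$, which by definition equals $\intinf f$.

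The main obstacle is conceptual rather than computational: when $G$ is merely continuous, the symbol $(f\circ G)\,G'$ is not a pointwise product of well-defined objects, since neither $f$ (a distribution) nor $G'$ (possibly nonexistent classically) admits a naive product. The resolution is the philosophy of the continuous primitive integral: integrability is defined through the primitive, so the natural and consistent reading of $(f\circ G)\,G'$ is the distributional derivative $(F\circ G)'$, which is a well-defined element of $\alex$ by the one-variable version of Proposition~\ref{propunique}(c). Once this interpretation is adopted, the equality is just the fundamental theorem of calculus applied to $F\circ G$.
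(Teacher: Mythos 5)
Your proposal is correct and follows essentially the same route as the paper: the expression $(f\circ G)\,G'$ is read, exactly as the paper states following the theorem, as notation for the distributional derivative $(F\circ G)'$, after which both identities reduce to the fundamental theorem of calculus applied to the continuous function $F\circ G$ (extended continuously to $[a,b]$ in the second case using continuity of $F$ at $\pm\infty$). The only ingredient you omit is the paper's justification of that notation via smooth approximations converging in the Alexiewicz norm, but that is motivation for the definition rather than a step in the proof of the stated equalities.
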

The function $F\circ G$ is continuous so its continuous primitive integral exists.
The quantity $(f\circ G)\,G'$ is written in place of $(F\circ G)'$ and it
is shown in \cite{talviladenjoy} that if two sequences of differentiable functions
converge to $F$ and $G$, respectively, then the usual pointwise formula for differentiation of
a composite function converges in the Alexiewicz norm to $(F\circ G)'$.  However, this does
not imply separate existence of $f\circ G$ and $G'$ and the multiplication is purely formal.
Indeed, suppose $F(x)=x^2$ and $G$ is continuous but pointwise differentiable nowhere.  Then
$F'\circ G=2G$.  An arbitrary distribution can be multiplied by a $C^\infty$ function and
a distribution in $\alex$ can be multiplied by a function of bounded variation but $G$ is not
of bounded variation so the product $2GG'$ has no meaning other than shorthand for $(G^2)'$.

Here we choose to present a restricted change of variables formula that has immediate application
to convolutions (Section~\ref{sectionconvolution}).

There is a well-established method of composing a distribution with a linear bijection.
Suppose $\Psi\fn\R^2\to\R^2$ is a linear bijection.  For a distribution $T\in\Dptwo$ the
composition $T\circ \Psi\in\Dptwo$ is defined for $\phi\in\Dtwo$ by 
$\langle T\circ\Psi,\phi\rangle=[\det\Psi]^{-1}
\langle T,\phi\circ\Psi^{-1}\rangle$.  For example, \cite[p.~285]{folland}.

If $f\in\alextwo$ then integration of $f\circ\Psi$ requires $\Psi$ to map intervals onto
finite unions of intervals (in the same Cartesian coordinate system)
 since these are the only regions in $\Rbar^2$ for which the
integral is defined.  This can be accomplished by having each component of $\Psi$ depend
linearly on only one variable.

\begin{theorem}\label{theoremchange}
Let $\alpha, \beta,\gamma_1, \gamma_2\in\R$ such that $\alpha\beta\not=0$.
Let $[a,b]\times[c,d]\subseteq\R^2$ and let $f\in\alextwo$.
(a) If $\Psi\fn\R^2\to\R^2$ is given by $\Psi(u,v)=(\alpha u+\gamma_1,\beta v+\gamma_2)$
then
$$\intabcd f(x,y)\,dy\,dx=\alpha\beta\int_{(a-\gamma_1)/\alpha}^{(b-\gamma_1)/\alpha}
\int_{(c-\gamma_2)/\beta}^{(d-\gamma_2)/\beta} f(\alpha u+\gamma_1,\beta v+\gamma_2)\,dv\,du.
$$
(b) If $\Psi\fn\R^2\to\R^2$ is given by $\Psi(u,v)=(\beta v+\gamma_2,\alpha u+\gamma_1)$
then
$$\intabcd f(x,y)\,dy\,dx=\alpha\beta
\int_{(c-\gamma_1)/\alpha}^{(d-\gamma_1)/\alpha}
\int_{(a-\gamma_2)/\beta}^{(b-\gamma_2)/\beta}
f(\beta v+\gamma_2,\alpha u+\gamma_1)\,dv\,du.
$$
\end{theorem}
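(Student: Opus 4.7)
The plan is to reduce to the four-corner formula of Definition~\ref{defnctsprimitiveintegral} by composing the primitive of $f$ with $\Psi$. Let $F\in\balextwo$ be the unique primitive of $f$, and for part (a) set $G(u,v):=F(\alpha u+\gamma_1,\beta v+\gamma_2)$. Since $F\in C(\Rbar^2)$ and the affine map $\Psi$ extends to a homeomorphism of $\Rbar^2$ that permutes its four corners, $G\in C(\Rbar^2)$, and hence $\dalex G\in\alextwo$ by Proposition~\ref{propunique}(c). Define $f\circ\Psi\in\alextwo$ by $f\circ\Psi:=(\alpha\beta)^{-1}\dalex G$; for smooth $F$ the classical chain rule confirms that this agrees with the pointwise composition $(u,v)\mapsto f(\alpha u+\gamma_1,\beta v+\gamma_2)$.

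Consistency with the standard distributional composition may be verified as a test-function computation. For $\phi\in\Dtwo$, apply the change of variables $x=\alpha u+\gamma_1$, $y=\beta v+\gamma_2$ in
$$\langle\dalex G,\phi\rangle=\intinf\intinf F(\alpha u+\gamma_1,\beta v+\gamma_2)\phi_{uv}(u,v)\,du\,dv,$$
producing a Jacobian $1/|\alpha\beta|$; combine this with the chain-rule identity $\phi_{uv}((x-\gamma_1)/\alpha,(y-\gamma_2)/\beta)=\alpha\beta\,\psi_{xy}(x,y)$ where $\psi:=\phi\circ\Psi^{-1}$. The net factor is $\sgn(\alpha\beta)$, so $\langle\dalex G,\phi\rangle=\sgn(\alpha\beta)\langle f,\psi\rangle=\alpha\beta\langle f\circ\Psi,\phi\rangle$ under the convention $\langle f\circ\Psi,\phi\rangle=|\det\Psi|^{-1}\langle f,\phi\circ\Psi^{-1}\rangle$, confirming the relation.

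Now set $u_1=(a-\gamma_1)/\alpha$, $u_2=(b-\gamma_1)/\alpha$, $v_1=(c-\gamma_2)/\beta$, $v_2=(d-\gamma_2)/\beta$. The unique primitive of $\dalex G$ in $\balextwo$ differs from $G$ only by a sum of the form $\Theta(u)+\Xi(v)$ (by the construction in the proof of Proposition~\ref{propunique}(c)), and any such sum cancels in a four-corner difference. Thus Definition~\ref{defnctsprimitiveintegral}, together with the convention $\int_p^q=-\int_q^p$, gives
$$\alpha\beta\int_{u_1}^{u_2}\int_{v_1}^{v_2}(f\circ\Psi)=G(u_1,v_1)+G(u_2,v_2)-G(u_1,v_2)-G(u_2,v_1)=F(a,c)+F(b,d)-F(a,d)-F(b,c)=\intabcd f,$$
which is (a); this is valid regardless of the signs of $\alpha$ and $\beta$ since the four-corner expression is invariant under swapping the endpoints of either variable. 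Part (b) is identical after replacing $G$ by $(u,v)\mapsto F(\beta v+\gamma_2,\alpha u+\gamma_1)$; the chain-rule computation still yields $\dalex G=\alpha\beta(f\circ\Psi)$. The principal technical step is the sign bookkeeping in the test-function identification of $\dalex G$; once that is settled, the theorem reduces to arithmetic from the four-corner formula.
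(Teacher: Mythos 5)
Your argument is correct and follows essentially the same route the paper takes: the paper's proof is a one-line appeal to the distributional composition $\langle T\circ\Psi,\phi\rangle=|\det\Psi|^{-1}\langle T,\phi\circ\Psi^{-1}\rangle$ together with the observation that $\Psi$ maps intervals to intervals, and you have simply written out the details --- identifying $\dalex(F\circ\Psi)=\alpha\beta\,(f\circ\Psi)$, noting that the $\balextwo$-primitive differs from $F\circ\Psi$ only by terms $\Theta(u)+\Xi(v)$ that cancel in the four-corner difference, and checking that the sign convention $\int_p^q=-\int_q^p$ absorbs the sign change of the four-corner expression when $\alpha$ or $\beta$ is negative. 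The only quibble is the phrase ``the four-corner expression is invariant under swapping the endpoints of either variable'': it in fact changes sign under a single swap, which is exactly what matches the integral's orientation convention, so the displayed identity is right even though the stated reason is loosely worded.
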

The proof follows from the above definition for composition with a linear bijection.
Note that $\Psi$ maps intervals to intervals, as indicated by the limits of integration
on the integrals in the theorem.

The convention following Definition~\ref{defnctsprimitiveintegral} 
on ordering of upper and lower limits of integration has been used.
If any of $a,b,c,d$ is in $\{\infty,-\infty\}$ then the usual arithmetic of infinities can
be used to determine the limits of integration.  For example, if $a=-\infty$ then
replace $(a-\gamma_1)/\alpha$ with $\sgn(-\alpha)\infty$.

See \cite{mawhin} for similar change of variables for the Henstock--Kurzweil integral.

\section{Banach lattice}\label{sectionbanachlattice}
The usual pointwise ordering on $\balextwo$ makes it into a Banach lattice and $\alextwo$
inherits this structure.  This creates a distributional ordering such that all distributions in $\alextwo$ 
have absolutely convergent integrals.  For functions in $\alextwo$ the usual pointwise ordering
leads to conditionally convergent integrals.  See the second paragraph of Section~\ref{sectionexamples}.

This distributional ordering has been used to solve problems in
ordinary and partial differential equations.  See 
S.~Heikkil\"a \cite{heikkila2011a}, \cite{heikkila2011b}, \cite{heikkila2012a}, \cite{heikkila2012b};
S.~Heikkil\"a and E. Talvila \cite{heikkilatalvila};
Liu Wei and Ye Guoju with numerous co-authors, for example,
\cite{liuAnGuojo}.

A reference for Banach lattices is \cite{aliprantisborder}.
The definitions in this section are largely repeated from \cite{talvilaacrn}.
Corresponding lattice results were obtained for distributional integrals
on the real line with continuous primitives \cite{talviladenjoy}, with regulated primitives
\cite{talvilaregulated}, and of higher order \cite{talvilaacrn}.  We omit proofs in this
section since they are so similar to results in these papers.

If $\preceq$ is a binary
operation on set $S$ then it is a {\it partial order} if for all
$x,y,z\in S$ it is {\it reflexive} ($x\preceq x$), {\it antisymmetric}
($x\preceq y$ and $y\preceq x$ imply $x=y$) and {\it transitive} ($x\preceq y$
and $y\preceq z$ imply $x\preceq z$). 
If
$S$ is a Banach space with norm $\norm{\cdot}_S$ and $\preceq$ is a partial
order on $S$  then $S$ is a {\it Banach lattice} if for all $x,y\in S$
\begin{enumerate}
\item
$x\vee y$ and $x\wedge y$ are in $S$.  The {\it join} is 
$x\vee y=
\sup\{x,y\}=w$ such that $x\preceq w$, $y\preceq w$ and if $x\preceq\tilde{w}$
and $y\preceq\tilde{w}$ then $w\preceq\tilde{w}$.
The {\it meet} is 
$x\wedge y=
\inf\{x,y\}=w$ such that $w\preceq x$, $w\preceq y$ and if $\tilde{w}\preceq x$
and $\tilde{w}\preceq y$ then $\tilde{w}\preceq w$.
\item
$x\preceq y$ implies $x+z\preceq y+z$ for all $z\in S$.
\item
$x\preceq y$ implies $kx\preceq ky$ for all $k\in\R$ with $k\geq 0$.
\item
$\abs{x}\preceq \abs{y}$ implies $\norm{x}_S\leq \norm{y}_S$.
\end{enumerate}
If $x\preceq y$ we write $y\succeq x$.
We also define
$\abs{x}=x\vee (-x)$, $x^+=x\vee 0$ and $x^-=(-x)\vee 0$. 
Then $x=x^+-x^-$ and $\abs{x}=x^++x^-$.

The usual pointwise ordering, $F_1\leq F_2$ if and only if $F_1(x,y)\leq F_2(x,y)$
for all $(x,y)\in\Rbar^2$,  is a partial order on $\balextwo$.
Since $\balextwo$ is closed under the operations
$(F_1\vee F_2)(x,y)=\sup(F_1,F_2)(x,y)=\max(F_1(x,y),F_2(x,y))$ and
$(F_1\wedge F_2)(x,y)=\inf(F_1,F_2)(x,y)=\min(F_1(x,y),F_2(x,y))$,
it is then a vector lattice (or Riesz space).  Since $\abs{F_1}\leq\abs{F_2}$ implies
$\norm{F_1}_\infty\leq\norm{F_2}_\infty$ we have that $\balextwo$ is a Banach lattice.

A partial ordering in $\alextwo$ is inherited from $\balextwo$.  If $f_1, f_2\in
\alextwo$ with respective primitives $F_1, F_2\in\balextwo$ then  $f_1\preceq f_2$
if and only if $F_1\leq F_2$ in $\balextwo$.  The isomorphism between $\alextwo$ and $\balextwo$
now shows $\alextwo$ is also a Banach lattice.

It is not a linear ordering.  For example, if $F(x,y)=\exp(-x^2-y^2)$ and $G(x,y)=\exp(-(x-1)^2-(y-1)^2)$
then we have neither $F\leq G$ nor $G\leq F$ and similarly in $\alextwo$.

An element $e\geq 0$ such that for each $x\in S$ there is $\lambda>0$
such that $\abs{x}\leq \lambda e$ is an {\it order unit} for lattice 
$S$.  The order unit for $\balextwo$ would have to vanish on $\{-\infty\}\times\Rbar$
and on $\Rbar\times\{-\infty\}$ and decay to $0$ more slowly than any continuous function.
(See \cite[Theorem~5.1]{talvilaacrn}).
Hence $\alextwo$ does not have an
order unit.

We have absolute
integrability: if $f\in\alextwo$ so  is $\abs{f}$.  The partial derivative operator
$\dalex$ commutes with $\vee$ and $\wedge$ and hence with $\abs{\cdot}$.

\begin{theorem}[Banach lattice]\label{theoremlattice}
(a) $\balextwo$ is a Banach lattice.
(b)
For $f_1,f_2\in\alextwo$ with respective primitives $F_1,F_2\in\balextwo$, 
define $f_1\preceq f_2$ if $F_1\leq F_2$ in $\balextwo$.
Then $\alextwo$ is a Banach lattice isomorphic to $\balextwo$.
(c)
$\balextwo$ and $\alextwo$ do not have an order unit.
(d)  Let $F_1,F_2\in\balextwo$.  Then $\dalex(F_1\vee F_2)=
(\dalex F_1)\vee(\dalex F_2)$,
$\dalex(F_1\wedge F_2)=(\dalex F_1)\wedge(\dalex F_2)$, $\abs{\dalex F}=\dalex\abs{F}$,
$\dalex(F^+)=(\dalex F)^+$, and $\dalex(F^-)=(\dalex F)^-$.
(e) If $f\in\alextwo$ with primitive $F\in\balextwo$ then $\abs{f}\in\alextwo$ with 
primitive $\abs{F}\in\balextwo$ and $\int_{-\infty}^x\int_{-\infty}^y \abs{f}=
\abs{\int_{-\infty}^x\int_{-\infty}^y f}$ for all $(x,y)\in\Rbar^2$. 
And, $\norm{\abs{f}}=\norm{f}$, 
$\norm{f^{\pm}}\leq \norm{f}$.  
(f) If $f\in\alextwo$ then $f^{\pm}\in\alextwo$ with respective
primitives $F^{\pm}\in\balextwo$.  {\it Jordan decomposition}: $f=f^+ - f^-$.
And, $\intinf fg=\intinf f^+g-\intinf f^-g$ for every $g\in\hkbv(\Rbar^2)$.
(g) $\alextwo$ is {\it distributive}:
$f_1\wedge(f_2\vee f_3)=(f_1\wedge f_2)\vee(f_1\wedge f_3)$
and  $f_1\vee(f_2\wedge f_3)=(f_1\vee f_2)\wedge(f_1\vee f_3)$ for all $f_1,f_2,f_3\in\alextwo$. 
(h) $\alextwo$ is {\it modular}: For all $f_1,f_2\in\alextwo$, if
$f_1\preceq f_2$ then $f_1\vee(f_2\wedge f_3)=f_2\wedge(f_1\vee f_3)$ for all $f_3\in\alextwo$.
(i) Let $F_1$ and $F_2$ be continuous functions on $\Rbar^2$. Then 
\begin{eqnarray*}
\dalex F_1\preceq \dalex F_2 & \Longleftrightarrow & F_1(-\infty,-\infty)+F_1(x,y)-F_1(-\infty,y)
-F_2(x,-\infty)\\
 & & \qquad\leq 
F_2(-\infty,-\infty)+F_2(x,y)-F_2(-\infty,y)
-F_2(x,-\infty)
\end{eqnarray*}
for all $(x,y)\in\Rbar^2$.

\end{theorem}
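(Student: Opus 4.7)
The plan is to establish (a) from the classical pointwise lattice structure on $C(\Rbar^2)$, and then transport statements (b), (d), (e), (f), (g), and (h) across the isometric isomorphism $\dalex\fn\balextwo\to\alextwo$ supplied by Proposition~\ref{propunique}. For (a) I would first check that $\balextwo$ is closed under $(F_1\vee F_2)(x,y)=\max\{F_1(x,y),F_2(x,y)\}$ and $(F_1\wedge F_2)(x,y)=\min\{F_1(x,y),F_2(x,y)\}$: continuity is preserved, and both operations respect the boundary vanishing since $\max(0,0)=\min(0,0)=0$. The remaining Banach lattice axioms for $\norm{\cdot}_\infty$ and the pointwise order are immediate. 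For (b), the prescription $f_1\preceq f_2\Leftrightarrow F_1\leq F_2$ in $\balextwo$ is well-defined by uniqueness of primitives, and $\dalex$ is a Banach-lattice isomorphism by construction.

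Parts (d)--(h) are then bookkeeping. For (d), both $\dalex(F_1\vee F_2)$ and $(\dalex F_1)\vee(\dalex F_2)$ have unique primitive $F_1\vee F_2\in\balextwo$, so they coincide; identical remarks handle $\wedge$, $\abs{\cdot}$, and $(\cdot)^\pm$. For (e) the primitive of $\abs{f}$ is $\abs{F}$, so $\int_{-\infty}^x\int_{-\infty}^y\abs{f}=\abs{F(x,y)}=\abs{\int_{-\infty}^x\int_{-\infty}^y f}$, and the norm identities follow from $\norm{\,\abs{F}\,}_\infty=\norm{F}_\infty$ together with $\abs{F^\pm}\leq\abs{F}$. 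The Jordan decomposition in (f) is $f=f^+-f^-$ from $F=F^+-F^-$ in $\balextwo$ plus linearity of $\dalex$, with the integral splitting then coming from linearity of the multiplier integral of Definition~\ref{defnparts}. Distributivity (g) and modularity (h) hold pointwise on real-valued functions and transfer through the isomorphism.

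The main obstacle is (c), the non-existence of an order unit. I would argue by contradiction: suppose $E\in\balextwo$ with $E\geq 0$ is an order unit. Fix any $y_0\in\R$; the slice $x\mapsto E(x,y_0)$ is continuous on $\Rbar$ with $E(-\infty,y_0)=0$. If $E(x,y_0)$ vanishes for all sufficiently negative $x$, pick $\phi\in\balex$ strictly positive on that vanishing region, and a contradiction is immediate. Otherwise, pick a sequence $x_n\to-\infty$ with $\epsilon_n:=E(x_n,y_0)>0$ and $\epsilon_n\to 0$, and choose $\phi\in\balex$ with $\phi(x_n)=\sqrt{\epsilon_n}$, interpolated continuously with $\phi(-\infty)=0$. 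Choosing $\chi\in\balex$ with $\chi(y_0)=1$, the product $F(x,y)=\phi(x)\chi(y)$ is in $\balextwo$, yet $\abs{F(x_n,y_0)}/E(x_n,y_0)=1/\sqrt{\epsilon_n}\to\infty$, so no single $\lambda>0$ can give $\abs{F}\leq\lambda E$. Part (b) transfers the conclusion to $\alextwo$.

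Finally, for (i), I would invoke Proposition~\ref{propunique}(c), which says that for any $G\in C(\Rbar^2)$ the function $\tilde G(x,y):=G(x,y)+G(-\infty,-\infty)-G(-\infty,y)-G(x,-\infty)$ lies in $\balextwo$ and satisfies $\dalex\tilde G=\dalex G$, so it is the unique primitive of $\dalex G$ in $\balextwo$. Applied to $F_1$ and $F_2$, part (b) then gives $\dalex F_1\preceq\dalex F_2$ if and only if $\tilde F_1(x,y)\leq\tilde F_2(x,y)$ for all $(x,y)\in\Rbar^2$, which on expansion is precisely the stated inequality (with the natural reading of the third term on the left-hand side as $-F_1(x,-\infty)$ rather than $-F_2(x,-\infty)$).
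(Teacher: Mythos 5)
Your proposal is correct and follows essentially the same route the paper takes (the paper defers the detailed verification to \cite{talviladenjoy}, \cite{talvilaregulated} and \cite{talvilaacrn} but sketches exactly this argument in the surrounding text): the pointwise order makes $\balextwo$ a Banach lattice, everything in (b), (d)--(h) transfers through the isometric order isomorphism $\dalex$, the order unit fails because a candidate unit must vanish at $-\infty$ yet be dominated after scaling by no continuous function decaying more slowly (your $\sqrt{\epsilon_n}$ construction is the standard concrete version of this), and (i) is Proposition~\ref{propunique}(c) combined with (b). Your reading of the third term on the left of (i) as $-F_1(x,-\infty)$ is the intended one; the printed $-F_2(x,-\infty)$ is a typo.
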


Let $f_1,f_2\in\alextwo$ with respective primitives $F_1,F_2\in\balextwo$.
Note that if $F_1\leq F_2$ in $\balextwo$ then we can differentiate both
sides of this inequality with $\dalex$ to get  $f_1\preceq f_2$ in $\alextwo$.  And,
if $f_1\preceq f_2$ in $\alextwo$ we can integrate both sides against 
$\chi_{(-\infty,x)\times(-\infty,y)}$ to get $F_1\leq F_2$ in $\balextwo$.  See Theorem~\ref{propftc}. 
This also shows the derivative $\dalex$ is a positive operator on 
$\balextwo$ and its inverse is a positive operator on $\alextwo$.

In general, $\intabcd \abs{f}$ and $\abs{\intabcd f}$ are not comparable.  However,
if $a=-\infty$ or $c=-\infty$ then $\intabcd \abs{f}\leq\abs{\intabcd f}$.

The usual pointwise ordering makes $L^1$ into a Banach lattice.
But the space of Henstock--Kurzweil integrable functions is not
a vector lattice. It is not closed under supremum and infimum since there
are functions integrable in this sense for which $\intinf\intinf f(x,y)\,dy\,dx$
converges but $\intinf \abs{f(x,y)}\,dy\,dx$ diverges.  For example, 
the function $f(x,y)=\sin(x)\sin(y)/(xy)$ from Section~\ref{sectionexamples}.
Thus, even for functions,
when we allow conditional convergence we must look elsewhere to find
a lattice structure.

Consider the 
example
$$
F(x,y)=\left\{\begin{array}{cl}
\int_0^x\int_0^y\frac{\sin(s)\sin(t)}{st}dt\,ds, & x, y\geq 0\\
0, & \text{otherwise}.
\end{array}
\right.
$$
Then $F(x,y)\geq 0$ 
for all $(x,y)\in\Rbar^2$ so $\dalex F\succeq 0$ in
$\alextwo$.  The integrand is not positive in a pointwise sense so $\preceq$ is not compatible with 
the usual pointwise ordering on $\alextwo$.  The
order $\preceq$ is also not compatible with the usual order on distributions:
if $T,U\in\Dp$ then $T\geq U$ if and only if $\langle T-U,\phi\rangle
\geq 0$ for all $\phi\in\D$ such that $\phi\geq 0$.  If $T\geq 0$
then it is known that $T$ is a Borel measure.  The usual ordering on
distributions does
not give a vector lattice on $\alextwo$. With the distributional ordering, $\sup(\dalex F,0)$ is the function
equal to $\sin(x)\sin(y)/(xy)$ when $x\in[2m\pi, (2m+1)\pi]$ and $y\in[2n\pi, (2n+1)\pi]$, 
or, $x\in[(2m+1)\pi, (2m+2)\pi]$ and $y\in[(2n+1)\pi, (2n+2)\pi]$ for some integers
$m, n\geq 0$, and is equal to
$0$ otherwise.  This function is not in $\alextwo$ since the integral
defining $F$ converges conditionally.
The derivative $\dalex F$ is not positive in
the pointwise  or
distributional sense.  Note that in $\alextwo$ we have
$(\dalex F)^+=\abs{\dalex F}=\dalex F$ and $(\dalex F)^-=0$.

A vector lattice is {\it order complete} (or {\it Dedekind complete})
if every nonempty subset that is bounded
above has a supremum.  But $\balextwo$ is not complete.
Let $F_n(x,y)=\abs{\sin(\pi/x)\sin(\pi/y)}$ for $x,y\geq 1/n$ with $F_n(x,y)=0$ if $x\leq 1/n$ or $y\leq 1/n$.
Let $S=\{F_n\mid n\in\N\}$ then $S\subseteq\balextwo$.
An upper bound for $S$ is the function 
$$
F(x,y) =\left\{\begin{array}{cl}
1, & x,y>0\\
\frac{1}{\abs{x}+1}, & x<0,y>0\\
\frac{1}{\abs{y}+1}, & x>0,y<0\\
\frac{1}{(\abs{x}+1)(\abs{y}+1)}, & x,y<0.\\
\end{array}
\right.
$$
But $\sup(S)(x,y)=\chi_{(0,\infty)\times(0,\infty)}(x,y)\abs{\sin(\pi/x)\sin(\pi/y)}$,
which is not continuous.
Hence, $\alextwo$ is also not complete.

A vector lattice is {\it Archimedean} if whenever $0\leq x\leq ny$ for
all $n\in\N$ and some $y\geq 0$ then $x=0$.  Applying the Archimedean
property at each point of the domain $\Rbar^2$ shows $\balextwo$ and hence $\alextwo$ are Archimedean.
All lattice inequalities that hold in $\R$ also hold in all Archimedean spaces
and all lattice equalities that hold in $\R$ also hold in all vector lattices.
See \cite{aliprantisborder}.  This expands the list of identities and
inequalities proved in Theorem~\ref{theoremlattice}.

A Banach lattice is an {\it abstract $L$-space} if
$\norm{x+y}=\norm{x}+\norm{y}$ for all $x,y\geq 0$.  
A Banach lattice is an {\it abstract $M$-space} if
$\norm{x\vee y}=\max(\norm{x},\norm{y})$ for all $x,y\geq 0$.  
See, for example, \cite{aliprantisborder}.  We next
show that $\balextwo$ and $\alextwo$ are abstract $M$-spaces but neither
is an abstract $L$-space.
\begin{theorem}
Both of $\balextwo$ and $\alextwo$ are abstract $M$-spaces.
Neither is an abstract $L$-space.
\end{theorem}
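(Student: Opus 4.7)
The plan is to reduce both statements to elementary facts about $\balextwo$ under pointwise ordering and then transport them to $\alextwo$ via the isometric Banach lattice isomorphism from Theorem~\ref{theoremlattice}(b),(d).

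For the $M$-space property, I would first observe that for non-negative $F \in \balextwo$ one has $\norm{F}_\infty = \sup_{(x,y) \in \Rbar^2} F(x,y)$, since $\abs{F} = F$. Given $F, G \geq 0$ in $\balextwo$, the pointwise formula $(F \vee G)(x,y) = \max(F(x,y), G(x,y))$ and the scalar identity $\sup \max(F,G) = \max(\sup F, \sup G)$ immediately yield $\norm{F \vee G}_\infty = \max(\norm{F}_\infty, \norm{G}_\infty)$. To pass to $\alextwo$, if $f_1, f_2 \succeq 0$ have primitives $F_1, F_2 \geq 0$, then by Theorem~\ref{theoremlattice}(d) the primitive of $f_1 \vee f_2$ is exactly $F_1 \vee F_2$, so the norm identity transfers verbatim through the isometry.

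To show that neither space is an $L$-space, I would exhibit two non-negative elements of $\balextwo$ with essentially disjoint supports. For instance, take $F_i(x,y) = \max(0, 1 - (x-a_i)^2 - (y-b_i)^2)$ for $i=1,2$, with $(a_1,b_1)$ and $(a_2,b_2)$ chosen at least three units apart, so that $F_1, F_2$ are non-negative continuous functions with compact disjoint supports and $\norm{F_i}_\infty = 1$. Both lie in $\balextwo$ because they are continuous on $\R^2$ with compact support, hence vanish on $\partial\R^2$ and extend continuously to $\Rbar^2$. Since the supports are disjoint,
$$
\norm{F_1 + F_2}_\infty = 1 \neq 2 = \norm{F_1}_\infty + \norm{F_2}_\infty.
$$
Setting $f_i = \dalex F_i$ transfers the counterexample to $\alextwo$ via the isometry, with $f_i \succeq 0$ coming from $F_i \geq 0$.

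There is no genuine obstacle: the only book-keeping is verifying that $\norm{F}_\infty$ coincides with $\sup F$ for non-negative $F$ (trivial) and that the primitive of a lattice operation in $\alextwo$ is the corresponding lattice operation of primitives (already in Theorem~\ref{theoremlattice}(d)). The essence is simply that the uniform norm is an $M$-norm but not an $L$-norm on non-negative continuous functions.
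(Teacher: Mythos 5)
Your proof is correct, and it is the standard argument: the uniform norm is an $M$-norm on the nonnegative cone of $\balextwo$ via $\sup\max(F,G)=\max(\sup F,\sup G)$, the $L$-norm identity fails for nonnegative bumps with disjoint supports, and everything transports to $\alextwo$ through the lattice isomorphism of Theorem~\ref{theoremlattice}. The paper itself gives no proof here, deferring instead to Theorem~5.2 of \cite{talvilaacrn}, so you have in effect supplied the self-contained argument that the citation covers; the only detail worth stating explicitly is that your compactly supported $F_i$ extend continuously to $\Rbar^2$ by zero, which you do note.
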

For a proof see \cite[Theorem~5.2]{talvilaacrn}.

For every measure $\mu$ it is
known that $L^1(\mu)$ is an abstract $L$-space and that a Banach
lattice is an abstract $L$-space if and only if it is lattice
isometric to  
$L^1(\nu)$ for some
measure $\nu$.  Notice that $L^\infty(\mu)$ is an abstract $M$-space.
A Banach lattice is an abstract $M$-space with unit  
if and only if it is lattice isometric to $C(K)$ for some compact
Hausdorff space $K$.
These results are due to S.~Kakutani, M.~Krein and
others.  For references see \cite{aliprantisborder}.
In our case, $\balextwo$ and $\alextwo$ are isomorphic to the set of
continuous functions that vanish on $\{-\infty\}\times\Rbar$
and on $\Rbar\times\{-\infty\}$.  It is not clear what the space $K$
is here.
The fact that $\alextwo$ is an abstract $M$-space but not an abstract
$L$-space indicates that what we have termed an integral here is
fundamentally different from the Lebesgue integral.  

\section{Banach algebra}\label{sectionbanachalgebra}
A {\it commutative algebra} is a vector space $V$ over scalar field $\R$
with a multiplication $V\times V\mapsto V$ such that for all
$u,v,w\in V$ and all $a\in\R$, $u(vw) =(uv)w$ (associative),
$uv=vu$ (commutative),
$u(v+w)=uv+uw$ and $(u+v)w=uw+vw$ (distributive),
$a(uv)=(au)v$. 
If $(V,\norm{\cdot}_V)$ is a Banach space and $\norm{uv}_V\leq
\norm{u}_V\norm{v}_V$ then it is a Banach algebra.
For any compact Hausdorff space, $K$, the set of continuous
real-valued functions $C(K)$ is a commutative Banach algebra
under pointwise multiplication and
the uniform norm.  Since $\Rbar^2$ is compact and $\balextwo$ is
closed under pointwise multiplication, $\balextwo$ is a subalgebra of
$C(\Rbar^2)$.
The usual pointwise multiplication, $(FG)(x,y)=[F(x,y)][G(x,y)]$
for all $(x,y)\in\Rbar^2$, then makes $\balextwo$ into a commutative algebra.
The inequality
$\norm{F_1F_2}_\infty\leq\norm{F_1}_\infty\norm{F_2}_\infty$ for
all $F_1,F_2\in\balextwo$ shows
$\balextwo$ is a commutative Banach algebra.  

There is no unit.  For suppose $F(x,y)>0$ for all $(x,y)\in\R^2$.  If $eF=F$ 
then $e(x,y)=1$ for all $(x,y)\in\R^2$ so $e\not\in\balextwo$.

Consider the sequence $(u_n)\subset\balextwo$ defined by $u_n(x)=
0$ for $x\leq -n$, $u_n(x)=x+n$ for $-n\leq x\leq 1-n$ and
$u_n(x)=1$ for $x\geq 1-n$.  Define $U_n\in\balextwo$ by $U_n(x,y)=u_n(x)u_n(y)$.  For each $F\in\balextwo$ we have
$\norm{F-U_nF}_\infty\to 0$.  Given $\epsilon>0$ there is $M\in\R$
such that $\abs{F(x,y)}<\epsilon$ for all $(x,y)$ such that $x\leq M$ or $y\leq M$.  We then have
$\abs{F(x,y)-U_n(x,y)F(x,y)}=\abs{F(x,y)}\abs{1-U_n(x,y)}<\epsilon$ if $x\leq M$ or $y\leq M$.
If $x\geq M$ and $y\geq M$ take $n\geq 1-M$.  Then $U_n(x,y)=1$.  Hence,
$\norm{F-U_nF}_\infty\to 0$.  $\balextwo$ is then said to have an
{\it approximate identity}.

\begin{theorem}
If $f_1,f_2\in\alextwo$ with respective primitives $F_1,F_2\in\balextwo$ define their product by $f_1f_2=\dalex(F_1F_2)$.
Then $\alextwo$ is a commutative Banach algebra without
unit, with approximate identity, isomorphic to $\balextwo$.
\end{theorem}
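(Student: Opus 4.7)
My plan is to transport the Banach algebra structure from $\balextwo$ to $\alextwo$ via the isometric linear isomorphism $\dalex\fn\balextwo\to\alextwo$ established in Proposition~\ref{propunique}. All of the heavy lifting has already been done in the text immediately preceding the theorem, where $\balextwo$ is shown to be a commutative Banach algebra under pointwise multiplication without a unit but with an approximate identity $(U_n)$.

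First I would verify that the product on $\alextwo$ is well-defined. For $F_1,F_2\in\balextwo$, the pointwise product $F_1F_2$ is continuous on $\Rbar^2$ since it is a product of continuous functions, and it vanishes whenever $x=-\infty$ or $y=-\infty$ because $F_1$ does. Hence $F_1F_2\in\balextwo$ and $\dalex(F_1F_2)\in\alextwo$, so $f_1f_2$ is a well-defined element of $\alextwo$. By construction, $\dalex(F_1F_2)=(\dalex F_1)(\dalex F_2)$, so $\dalex$ intertwines the two multiplications. Since $\dalex$ is a linear bijection, the commutativity, associativity, distributivity, and scalar compatibility axioms transfer immediately from $\balextwo$ to $\alextwo$.

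Next I would check the submultiplicative norm inequality. Using the isometry $\norm{\cdot}=\norm{\cdot}_\infty\circ\dalex^{-1}$, one has
\[
\norm{f_1f_2}=\norm{F_1F_2}_\infty\leq\norm{F_1}_\infty\norm{F_2}_\infty=\norm{f_1}\norm{f_2},
\]
which together with the Banach space structure from Proposition~\ref{propunique}(b) makes $\alextwo$ a commutative Banach algebra. The absence of a unit transfers because if $e\in\alextwo$ with primitive $E\in\balextwo$ satisfied $ef=f$ for all $f\in\alextwo$, then $\dalex(EF)=\dalex F$ for all $F\in\balextwo$, and injectivity of $\dalex$ on $\balextwo$ (Proposition~\ref{propunique}(a)) would force $E$ to be a unit in $\balextwo$, contradicting the argument already given in the text.

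For the approximate identity, I would set $u_n=\dalex U_n\in\alextwo$, where $U_n$ is the approximate identity in $\balextwo$ constructed above. For any $f\in\alextwo$ with primitive $F\in\balextwo$, the isometry yields
\[
\norm{f-u_nf}=\norm{\dalex(F-U_nF)}=\norm{F-U_nF}_\infty\to 0
\]
by the convergence already established in $\balextwo$. Finally, the isomorphism with $\balextwo$ is now a Banach algebra isomorphism since $\dalex$ is a norm-preserving bijective linear map that respects multiplication. There is no real obstacle here; every claim in the theorem reduces under the isometric isomorphism $\dalex$ to a statement already proved for $\balextwo$, and the only point requiring a moment of care is confirming that the transported multiplication on $\alextwo$ satisfies $\dalex(F_1F_2)=(\dalex F_1)(\dalex F_2)$, which is true by \emph{definition} of the product.
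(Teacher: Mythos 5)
Your proposal is correct and follows exactly the route the paper intends: the paper states this theorem without a written proof precisely because, once $\balextwo$ is shown in the preceding paragraphs to be a commutative Banach algebra without unit but with the approximate identity $(U_n)$, every assertion transfers through the isometric linear isomorphism $\dalex$, with the multiplication on $\alextwo$ intertwined by definition. Your added care about well-definedness (via uniqueness of primitives) and the explicit transfer of the no-unit and approximate-identity arguments fill in the details the paper leaves implicit.
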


There is no difficulty in allowing functions in $\balextwo$ to be
complex-valued and using $\C$ as the field of scalars.  
Complex conjugation is then an involution on $\balextwo$.  Then
$\balextwo$ is a $C^\ast$-algebra since for each $F\in\balextwo$ we have
$\norm{{\overline F}}_\infty =\norm{F}_\infty$ and
$\norm{F{\overline F}}_\infty=\norm{F}_\infty^2$.  Thus,
$\alextwo$ is also a  $C^\ast$-algebra.

Suppose $f_1,f_2\in\alextwo$ have respective primitives $F_1,F_2\in\balextwo$.
Let $g\in\hkbv(\Rbar^2)$. Then according to Definition~\ref{defnparts}
\begin{align*}
&\int_{-\infty}^x\int_{-\infty}^y (f_1f_2)g  =  F_1(x,y)F_2(x,y)g(x,y)-\int_{-\infty}^x F_1(s,\infty)F_2(s,\infty)\,d_1g(s,\infty)\\
&\qquad-\int_{-\infty}^y F_1(\infty,t)F_2(\infty,t)\,d_2g(\infty,t)
+\int_{-\infty}^x\int_{-\infty}^y F_1(s,t)F_2(s,t)\,d_{12}g(s,t).
\end{align*}

There are zero divisors.  Let $F_1, F_2\in\balextwo$ with disjoint
supports.  Then $F_1F_2=0$ in $\balextwo$ so $\dalex(F_1F_2)=0$ in
$\alextwo$, yet neither $\dalex F_1$ nor $\dalex F_2$ need be zero.
This example also shows the multiplication introduced in $\alextwo$
is not compatible with pointwise multiplication in the case when
elements of $\alextwo$ are functions.

The product of a function in $\balextwo$ and a function in $C(\Rbar^2)$ is
in $\balextwo$.  Therefore,
$\balextwo$ is an ideal of $C(\Rbar^2)$.  The maximal ideals of $C(\Rbar^2)$
consist of functions vanishing at a single point.  See, for example,
\cite{kadison} and
\cite{kaniuth} for this and other results that hold for continuous functions
on a compact Hausdorff space (and hence for $\alextwo$).

\section{Iterated integrals}\label{sectioniteratedintegrals}
It was shown in \cite[Theorem~4]{ang} that if $f\in\alextwo$ then a type of Fubini
theorem holds in that
$$
\intabcd f= \int_a^b\left(\int_c^d f(x,y)\,dy\right)dx=\int_c^d\left(\int_a^b f(x,y)\,dx\right)dy 
$$
and the integral over an interval in $\Rbar^2$ is equal to the two iterated integrals.

A sufficient condition for existence of the iterated integrals, that can sometimes take
the place of Tonelli's theorem
in $\alextwo$, is the following.
\begin{prop}\label{propfubini1}
Let $f\in\alex$.
Let $g\fn\R^2\to\R$ be measurable.
Assume (i) for each $x\in\R$ the function
$y\mapsto g(x,y)$  is in $\bv(\R)$; (ii) the function $x\mapsto
V_{2}g(x,\cdot)$ is in $L^1(\R)$; (iii) there is $M\in L^1(\R)$
such that for each $y\in \R$ we have $\abs{g(x,y)}\leq M(x)$.  Then
the iterated integrals exist and are equal,
$\intinf\intinf f(y)g(x,y)\,dy\,dx=
\intinf\intinf f(y)g(x,y)\,dx\,dy$.
\end{prop}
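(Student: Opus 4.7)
The plan is to reduce to ordinary Fubini--Tonelli on $L^1(\R^2)$ via the density of $L^1$ in $\alex$, using H\"older estimates to pass to the limit. First I would verify that both iterated integrals are well-defined. For the first, condition (i) gives $g(x,\cdot)\in\bv(\R)$ for each $x$, so the one-dimensional H\"older inequality implies that $h(x):=\intinf f(y)g(x,y)\,dy$ exists with $\abs{h(x)}\leq \norm{f}\bigl[M(x)+V_2g(x,\cdot)\bigr]$; by (ii) and (iii) the right side lies in $L^1(\R)$, so $h\in L^1(\R)$ and its outer Lebesgue integral exists. For the second, (iii) gives $g(\cdot,y)\in L^1(\R)$ for each $y$, so $G(y):=\intinf g(x,y)\,dx$ is defined and bounded by $\norm{M}_1$. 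To see $G\in\bv(\R)$, for any partition $y_0<\ldots<y_N$ I would pass absolute values inside the Lebesgue integral and interchange with the finite sum to obtain
$$\sum_{i=1}^N \abs{G(y_i)-G(y_{i-1})}\leq \intinf \sum_{i=1}^N\abs{g(x,y_i)-g(x,y_{i-1})}\,dx\leq \intinf V_2g(x,\cdot)\,dx,$$
which is finite by (ii). Hence $\intinf f(y)G(y)\,dy$ is well-defined by H\"older applied to $f\in\alex$ and $G\in\bv(\R)$.

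Next I would approximate $f$ in the Alexiewicz norm by $\{f_n\}\subset L^1(\R)$, which is the one-dimensional analogue of Proposition~\ref{propdense}. For each $n$, the function $(x,y)\mapsto f_n(y)g(x,y)$ is jointly measurable and dominated by $\abs{f_n(y)}M(x)\in L^1(\R^2)$, so classical Fubini--Tonelli yields
$$\intinf\intinf f_n(y)g(x,y)\,dy\,dx=\intinf\intinf f_n(y)g(x,y)\,dx\,dy.$$
To pass to the limit I would use H\"older on each side. Writing $h_n$ for the inner integral corresponding to $f_n$, the left-hand side obeys
$$\intinf\abs{h_n(x)-h(x)}\,dx\leq \norm{f_n-f}\intinf\bigl[M(x)+V_2g(x,\cdot)\bigr]\,dx\to 0,$$
while the right-hand side obeys
$$\left|\intinf(f_n-f)(y)G(y)\,dy\right|\leq \norm{f_n-f}\norm{G}_{BV}\to 0.$$
Letting $n\to\infty$ in the identity for $f_n$ gives the equality of iterated integrals for $f$.

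The only genuine obstacle I anticipate is the verification that $G\in\bv(\R)$ with controllable variation, which rests on the interchange of a finite sum with the Lebesgue integral combined with hypotheses (ii) and (iii). Everything else follows the routine pattern of approximating an $\alex$ distribution by $L^1$ functions and transferring identities through one-dimensional H\"older bounds.
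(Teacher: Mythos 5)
Your argument is correct. Note that the paper does not actually supply a proof of this proposition: it defers to \cite[Proposition~A.3]{talvilaconv} (and to \cite{celidze} for the compact-interval Denjoy case), and the remark following Proposition~\ref{propfubini2} indicates that that proof runs through the primitive rather than through $f$ itself --- one writes the inner integral by parts as $F(\infty)g(x,\infty)-\intinf F(y)\,d_2g(x,y)$ and justifies the interchange by approximating $F$ uniformly by step functions, for which the interchange is a finite computation with error controlled by $\norm{F-\sigma}_\infty\intinf V_2g(x,\cdot)\,dx$. Your route --- approximate $f$ in the Alexiewicz norm by $L^1$ functions, apply classical Fubini--Tonelli to $f_n(y)g(x,y)$, which is dominated by $\abs{f_n(y)}M(x)\in L^1(\R^2)$, and pass to the limit on both sides with the one-dimensional H\"older inequality --- is a genuinely different and equally valid reduction; it trades the Stieltjes bookkeeping of the reference for the classical measure-theoretic theorem, at the price of leaning on the density of $L^1$ in $\alex$. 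Two small points you should make explicit. First, the measurability of $h(x)=\intinf f(y)g(x,y)\,dy$: your own bound $\abs{h_n(x)-h(x)}\leq\norm{f_n-f}\bigl(M(x)+V_2g(x,\cdot)\bigr)$ shows $h_n\to h$ pointwise, and each $h_n$ is measurable by Fubini, so $h$ is measurable; without this the $L^1$ estimate you write for $h_n-h$ is not yet meaningful. Second, for $f_n\in L^1(\R)$ and $g(x,\cdot)\in\bv(\R)$ the integration-by-parts definition of $\intinf f_n(y)g(x,y)\,dy$ must be identified with the Lebesgue integral of the pointwise product (and likewise for $\intinf f_nG$); this standard compatibility is the hinge on which your appeal to classical Fubini turns, since the identity you transport to the limit is a Lebesgue-integral identity while the limit statement is in the continuous-primitive sense. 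Your verification that $G\in\bv(\R)$ with $VG\leq\intinf V_2g(x,\cdot)\,dx$ is exactly the one nontrivial structural fact needed on the right-hand side, and your treatment of it is correct.
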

For a proof see \cite[Proposition~A.3]{talvilaconv}.
The proposition was first proved for the wide Denjoy integral on compact intervals on page~$58$ in
\cite{celidze}. 

Calculus and integration texts often contain examples of functions of two variables 
for which the iterated integrals
are not equal.  These conundrums can usually be resolved by showing the primitive is not continuous.
\begin{example}
Let $\Omega\subset\Rbar^2$ be the interval
$\Omega=\{(x,y)\in\Rbar^2\mid 0<x< 1, 0< y<\infty\}$.
Let $F\fn\Omega\to\R$ be given by $F(x,y)=x^y$.  Then $F$ is continuous on $\Omega$ but there is no
way to extend the domain of $F$ to
${\overline \Omega}$ so that $F$ is continuous.  For, we have the limiting values,
$F(0,y)=0$ for $0<y<\infty$, $F(1,y)=1$ for $0< y<\infty$,
$F(x,0)=1$ for $0<x< 1$, $F(x,\infty)=0$ for $0<x<1$.  Hence, $F$ cannot be made continuous on
${\overline \Omega}$.
Now we let $f(x,y)=\dalex F(x,y)=x^{y-1}+x^{y-1}y\log(x)$ for $(x,y)\in\Omega$.
Since $F$ is not continuous on $\Omega$ the integral $\int_\Omega f$ does not exist, 
yet the two iterated integrals are equal. 
A calculation shows that for each $0<x<1$ we have $\int_0^\infty f(x,y)\,dy=0$ so
$\int_0^1\left(\int_0^\infty f(x,y)\,dy\right)dx=0$.  For each $0<y<\infty$ we have
$\int_0^1 f(x,y)\,dx=0$ so
$\int_0^\infty\left(\int_0^1 f(x,y)\,dx\right)dy=0$.
Suppose $0<a<b<1$, $0<c<d<\infty$.  Taking iterated limits
\begin{eqnarray*}
\lim_{b\to 1^-}\lim_{d\to\infty}\lim_{a\to 0^+}\lim_{c\to 0^+}\intabcd f   & = & 
\lim_{b\to 1^-}\lim_{d\to\infty}\lim_{a\to 0^+}\lim_{c\to 0^+}\left[a^c+b^d-a^d-b^c\right]\\
 & = & \lim_{b\to 1^-}\lim_{d\to\infty}\left[1+b^d-0-1\right] =0
\end{eqnarray*}
and
$$
\lim_{b\to 1^-}\lim_{d\to\infty}\lim_{c\to 0^+}\lim_{a\to 0^+}\intabcd f  =-1.
$$
Hence, $\int_\Omega f$ does not exist.
\end{example}
\begin{example}
Let $F(x,y)=\arctan(xy)$.  Then
$$
F_1(x,y)=\frac{y}{x^2y^2+1},\,\, F_2(x,y)=\frac{x}{x^2y^2+1},\,\, F_{12}(x,y)=\frac{1-x^2y^2}{(x^2y^2+1)^2}=F_{21}(x,y).
$$
We have the iterated improper Riemann integrals
\begin{eqnarray*}
\intinf\int_0^1 F_{12}(x,y)\,dy\,dx & = & \intinf\left[F_1(x,1)-F_1(x,0)\right]\,dx\\
 & = & \intinf \frac{dx}{x^2+1}=\pi\\
\int_0^1\intinf F_{21}(x,y)\,dx\,dy & = & \int_0^1\left[\lim_{x\to\infty}F_2(x,y)-\lim_{x\to-\infty}F_2(x,y)
\right]\,dy\\
 & = & \int_0^1 0\,dy=0.
\end{eqnarray*}
Although $F$ is bounded and continuous on $\R^2$, it is not continuous on $\Rbar^2$.
This can be seen by examining the behaviour of $F(x,y)$ in a neighbourhood of
the point $(0,\infty)$.  Hence, $\intinf\intinf \dalex F$ does not exist.
\end{example}
In $\R^2$ the iterated integrals theorem takes the following form.
\begin{prop}\label{propfubini2}
Let $f\in\alextwo$.
Let $g\fn\Rbar^2\times\Rbar^2\to\R$ be measurable on $\R^2\times\R^2$.
Assume (i) for each $(x,y)\in\Rbar^2$ the function
$(s,t)\mapsto g(x,y;s,t)$  is in $\hkbv(\Rbar^2)$; 
(ii) for each $t\in\Rbar$ the function $(x,y)\mapsto V_1g(x,y;\cdot,t)\in L^1(\R^2)$,
for each $s\in\Rbar$ the function $(x,y)\mapsto V_2g(x,y;s,\cdot)\in L^1(\R^2)$,
the function $(x,y)\mapsto V_{12}g(x,y;\cdot,\cdot)\in L^1(\R^2)$;
(iii) there is $M\in L^1(\R^2)$
such that for each $(s,t)\in \Rbar^2$ we have $\abs{g(x,y;s,t)}\leq M(x,y)$.  Then
the iterated integrals exist and are equal,
$\intinf\intinf f(s,t)g(s,t;x,y)\,dt\,ds\,\,dy\,dx=
\intinf\intinf f(s,t)g(s,t;x,y)\,dy\,dx\,dt\,ds$.
\end{prop}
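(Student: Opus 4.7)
The plan is to verify that both iterated integrals are well defined, approximate $f$ by $L^1$ functions so that classical Fubini applies to the approximants, and then pass to the limit using an integration by parts bound on one side and the H\"older inequality on the other.

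For the inner integral on the left, $h(x,y):=\intinf\intinf f(s,t)g(x,y;s,t)\,dt\,ds$, hypothesis (i) together with Definition~\ref{defnparts} immediately gives existence. Let $F\in\balextwo$ be the primitive of $f$. Since $F$ vanishes whenever its first or second coordinate is $-\infty$, the integration by parts formula collapses to
$$h(x,y)=F(\infty,\infty)g(x,y;\infty,\infty)-\intinf F(s,\infty)\,d_1g(x,y;s,\infty)-\intinf F(\infty,t)\,d_2g(x,y;\infty,t)+\intinf\intinf F\,d_{12}g(x,y;\cdot,\cdot),$$
from which I obtain the explicit pointwise estimate
$$|h(x,y)|\leq\norm{F}_\infty\left[|g(x,y;\infty,\infty)|+V_1g(x,y;\cdot,\infty)+V_2g(x,y;\infty,\cdot)+V_{12}g(x,y;\cdot,\cdot)\right].$$
Specialising hypothesis (ii) to $t=\infty$ and to $s=\infty$, and combining with (iii), shows the bracketed quantity lies in $L^1(\R^2)$, so $h\in L^1(\R^2)$ and the left side is defined. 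For the right side, set $G(s,t):=\intinf\intinf g(x,y;s,t)\,dy\,dx$; it is bounded by $\norm{M}_1$ using (iii), and pulling any partition sum under the Lebesgue integral gives $V_{12}G\leq\intinf\intinf V_{12}g(x,y;\cdot,\cdot)\,dy\,dx<\infty$ by (ii), and similarly $V_1G(\cdot,t_0)$, $V_2G(s_0,\cdot)$ are finite for every $t_0,s_0$. Thus $G\in\hkbv(\Rbar^2)$ and the right side equals $\intinf\intinf f(s,t)G(s,t)\,dt\,ds$, which is defined.

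By Proposition~\ref{propdense} I pick $\{f_n\}\subset L^1(\R^2)$ with $\norm{f-f_n}\to 0$ and uniformly convergent primitives $F_n\to F$. Since $|f_n(s,t)g(x,y;s,t)|\leq|f_n(s,t)|M(x,y)$ lies in $L^1(\R^4)$, the classical Fubini theorem gives the desired equality of iterated integrals for each $f_n$, with the inner integral on the right reducing to $\intinf\intinf f_nG\,dt\,ds$ since $f_n\in L^1$. To pass $n\to\infty$, on the left I apply the integration by parts estimate above with $f$ replaced by $f-f_n$ (primitive $F-F_n$) to get $\intinf\intinf|h_n(x,y)-h(x,y)|\,dy\,dx\leq\norm{F-F_n}_\infty\cdot C\to 0$, where $C$ is the finite $L^1$ norm of the bracketed bound; on the right, the H\"older inequality (Proposition~\ref{propholder}) with $G\in\hkbv$ gives $\bigl|\intinf\intinf(f-f_n)G\bigr|\leq\norm{f-f_n}\norm{G}_{bv}\to 0$. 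The claimed equality follows. The main obstacle is precisely the uniform $L^1$ bound on $h$: a direct H\"older estimate yields only $|h(x,y)|\leq\norm{f}\norm{g(x,y;\cdot,\cdot)}_{bv}$, but the Hardy--Krause norm involves $\sup_tV_1g(x,y;\cdot,t)$ and $\sup_sV_2g(x,y;s,\cdot)$, which hypothesis (ii) does not control. The integration by parts representation is indispensable because it replaces these suprema by variations on the distinguished slices $t=\infty$ and $s=\infty$, which (ii) does control.
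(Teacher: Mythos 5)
Your argument is correct, and it takes a genuinely different route from the paper's. The paper's own ``proof'' is only a pointer: it says the argument is similar to that of Proposition~\ref{propfubini1} (i.e.\ to Proposition~A.3 of \cite{talvilaconv}), with the final step resting on the density of step functions in $\balextwo$. In other words, the paper writes the inner integral by parts exactly as you do, reduces the claim to interchanging a Lebesgue integral with Henstock--Stieltjes integrals such as $\intinf\intinf\intinf\intinf F(s,t)\,d_{12}g(x,y;s,t)\,dy\,dx$, and justifies that interchange by approximating the primitive $F$ uniformly by step functions, for which the interchange is a finite computation. You instead approximate $f$ in the Alexiewicz norm by $L^1$ functions and delegate the interchange to the classical Fubini theorem, then pass to the limit on each side. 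Both proofs turn on the same two uniform estimates, and you have isolated the crucial one: the integration-by-parts representation of the inner integral is what converts hypothesis (ii) --- which controls the variations only on the slices $t=\infty$, $s=\infty$ together with the Vitali variation, not the suprema $\sup_t V_1g(x,y;\cdot,t)$ and $\sup_s V_2g(x,y;s,\cdot)$ occurring in $\norm{\cdot}_{bv}$ --- into an $L^1(dy\,dx)$ majorant for the inner integral $h$; a bare application of Proposition~\ref{propholder} would not do this. What your route buys is that the elementary case is outsourced to classical Fubini rather than re-proved by hand for step functions; what the paper's route buys is that it stays entirely inside the Stieltjes framework and so does not need the (standard, but left unstated in your write-up) consistency facts that for $f_n\in L^1(\R^2)$ and $G\in\hkbv(\Rbar^2)$ the pairing of Definition~\ref{defnparts} coincides with the Lebesgue integral of the pointwise product $f_nG$, and that $h$ is measurable --- in your argument the latter follows from the pointwise convergence $h_n\to h$ of the measurable functions $h_n$, which your integration-by-parts bound applied to $f-f_n$ already delivers.
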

Note that the variation in (ii) is computed with respect to
the second pair of variables in $g$, while the integration in (ii) and (iii) is computed
with respect to the first pair of variables.  The proof is similar to that of 
Proposition~\ref{propfubini1}.  The final step uses the density of step functions in
$\balextwo$ (Theorem~\ref{propseparable}).

\section{Convolution}\label{sectionconvolution}
In this section the convolution  $f\ast g$ is defined for $f\in\alextwo$ and $g\in\hkbv(\Rbar^2)$
and then for $g\in L^1(\R^2)$.

In Theorem~\ref{theoremconvolutionacbv} it is shown that when $g\in\hkbv(\Rbar^2)$ the convolution has similar
properties to the case when $f\in L^1$ and $g\in L^\infty$.  Since $L^\infty$ is the dual
space of $L^1$ this mirrors the fact that $\hkbv(\Rbar^2)$ is the dual space of $\alextwo$.  In
Theorem~\ref{theoremconvolutionacL1} the density of $L^1(\R^2)$ in $\alextwo$ is used to
define the convolution for $f\in\alextwo$ and $g\in L^1(\R^2)$.  This type of convolution
has properties analogous to convolutions on $L^1\times L^1$.

Convolutions in $\alex$ were introduced in \cite{talvilaconv}.  Here we extend the two most
important theorems from $\R$ to $\R^2$.  Many other results, such as differentiation
and integration of convolutions, can also be carried over to $\R^2$.

First we show the convolution is well-defined.
Fix $(x,y)\in\R^2$.  If $f\in\alextwo$ has primitive $F\in\balextwo$ define
$\Phi(s,t)=F(x-s, y-t)$.  Then $\Psi\in\balextwo$ and $\dalex \Psi(s,t)=\dalex F(x-s,y-t)$.
We can then define $\psi(s,t)=f(x-s,y-t)=\dalex\Psi(s,t)$.  Then 
$f\ast g(x,y)=\intinf\intinf f(x-s,y-t)g(s,t)\,dt\,ds$ is well-defined for each $g\in\hkbv(\Rbar^2)$.
See Theorem~\ref{theoremchange}.

\begin{theorem}\label{theoremconvolutionacbv}
Let $f\in\alextwo$, let $F\in\balextwo$ be its primitive 
and let $g\in\hkbv(\Rbar^2)$. Then (a) $f\ast g$  exists on $\R^2$
(b) $f\ast g=g\ast f$  (c) $\norm{f\ast g}_\infty \leq \norm{f}\norm{g}_{bv}$
(d) $f\ast g\in C(\Rbar^2)$.  Let $\epsilon_1,\epsilon_2\in\{+,-\}$.  Then
$\lim_{\substack{x\to\epsilon_1\infty\\y\to\epsilon_2\infty}}
f\ast g(x)=g(\epsilon_1\infty,\epsilon_2\infty)F(\infty,\infty)$.
(e) If $h\in L^1(\R^2)$ then $f\ast (g\ast h)=(f\ast g)\ast h\in C(\Rbar^2)$.
\end{theorem}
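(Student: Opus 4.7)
The plan for parts (a)--(c) is to identify, for each fixed $(x,y)\in\R^2$, the primitive of the distribution $\phi_{x,y}(s,t):=f(x-s,y-t)$. A direct calculation (equivalently, an application of Theorem~\ref{theoremchange} with $\alpha=\beta=-1$, $\gamma_1=x$, $\gamma_2=y$) gives
$$
\Phi_{x,y}(s,t)=F(x-s,y-t)+F(\infty,\infty)-F(\infty,y-t)-F(x-s,\infty),
$$
and one verifies $\dalex\Phi_{x,y}=\phi_{x,y}$ together with $\Phi_{x,y}(-\infty,t)=\Phi_{x,y}(s,-\infty)=0$, so $\Phi_{x,y}\in\balextwo$ and $\phi_{x,y}\in\alextwo$. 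Since $g\in\hkbv(\Rbar^2)$ is a multiplier (Remark~\ref{remarkmultipliersmodule}), Definition~\ref{defnparts} gives existence of $\intinf\intinf \phi_{x,y}g$, which proves (a). Commutativity (b) will come from another application of Theorem~\ref{theoremchange} to $\phi_{x,y}g$ via $(s',t')=(x-s,y-t)$, and (c) follows from Proposition~\ref{propholder} together with the estimate $\norm{\Phi_{x,y}}_\infty\leq 4\norm{f}$ obtained from the triangle inequality.

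For continuity in (d), I will apply the H\"older inequality to the difference, obtaining
$$
\abs{f*g(x,y)-f*g(x',y')}\leq C\norm{\phi_{x,y}-\phi_{x',y'}}\norm{g}_{bv}
$$
with $\norm{\phi_{x,y}-\phi_{x',y'}}=\norm{\Phi_{x,y}-\Phi_{x',y'}}_\infty$, which tends to $0$ as $(x',y')\to(x,y)$ by uniform continuity of $F$ on the compact space $\Rbar^2$. For the corner limits, I will use (b) to rewrite $f*g(x,y)=\intinf\intinf f\,G_{x,y}$ with $G_{x,y}(u,v):=g(x-u,y-v)$, and then invoke the convergence theorem (Proposition~\ref{propconvergence}) along any sequence $(x_n,y_n)\to(\epsilon_1\infty,\epsilon_2\infty)$: the norms $\norm{G_{x_n,y_n}}_{bv}=\norm{g}_{bv}$ are uniformly bounded, and for each $(u,v)\in\Rbar^2$ we have $G_{x_n,y_n}(u,v)\to g(\epsilon_1\infty,\epsilon_2\infty)$, using that finiteness of $V_1g$, $V_2g$, and $V_{12}g$ forces corner limits of $g$ to exist after a suitable normalization. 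Proposition~\ref{propconvergence} then delivers the stated value $g(\epsilon_1\infty,\epsilon_2\infty)F(\infty,\infty)$.

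For (e), after first checking $g*h\in\hkbv(\Rbar^2)$ (which is a translation-invariance computation of the Hardy--Krause seminorms against the $L^1$ mass of $h$), the identity
\begin{align*}
f*(g*h)(x,y)&=\intinf\intinf f(x-s,y-t)\left(\intinf\intinf g(s-u,t-v)h(u,v)\,du\,dv\right)dt\,ds\\
&=\intinf\intinf h(u,v)(f*g)(x-u,y-v)\,du\,dv=(f*g)*h(x,y)
\end{align*}
will be justified by Proposition~\ref{propfubini2} applied to $f$ against the kernel $(s,t;u,v)\mapsto g(s-u,t-v)h(u,v)$: its Hardy--Krause variation in $(s,t)$ equals $\norm{g}_{bv}\abs{h(u,v)}$, while the pointwise domination by a constant multiple of $\abs{h(u,v)}\in L^1(\R^2)$ provides the required integrable majorant. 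Continuity of the resulting convolution on $\Rbar^2$ then comes from part (d) applied to the pair $(f,g*h)$.

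The hardest step will be the corner-limit portion of (d): one must verify that a function in $\hkbv(\Rbar^2)$ has a definite limit at each of the four corners of $\Rbar^2$ that coincides with its value there, which requires combining the one-variable BV limits along the edges with the finiteness of $V_{12}g$ and fixing a normalization. The Fubini verification in (e) is also delicate because $h$ is only $L^1$ and not of bounded variation, so the Hardy--Krause estimates have to be carried entirely by the $g$-factor in the kernel.
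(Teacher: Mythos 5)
Your architecture matches the paper's almost step for step: the corrected primitive $\Phi_{x,y}$ of the reflected translate together with integration by parts for (a), change of variables for (b), H\"older plus invariance of norms for (c), norm-continuity of translations for the continuity part of (d), the convergence theorem for the corner limits, and Proposition~\ref{propfubini2} for associativity in (e) after checking $g\ast h\in\hkbv(\Rbar^2)$. Your explicit formula $\Phi_{x,y}(s,t)=F(x-s,y-t)+F(\infty,\infty)-F(\infty,y-t)-F(x-s,\infty)$ is in fact more careful than the paper's own wording, which asserts that $(s,t)\mapsto F(x-s,y-t)$ already lies in $\balextwo$ without the correction terms. Two of your steps, however, do not deliver exactly what is claimed. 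In (c), the bound $\norm{\Phi_{x,y}}_\infty\leq 4\norm{f}$ fed into the H\"older inequality yields only $\norm{f\ast g}_\infty\leq 4\norm{f}\norm{g}_{bv}$, and the factor $4$ is not removable along this route: $\norm{\Phi_{x,y}}_\infty=\sup_{(u,v)}\abs{\int_u^\infty\int_v^\infty f}$ can genuinely exceed $\norm{f}$. To get the stated constant $1$ you must first use (b) to put the reflection and translation on $g$, writing $f\ast g(x,y)=\intinf\intinf f(u,v)\,g(x-u,y-v)\,dv\,du$ and applying the second form of Proposition~\ref{propholder}, since the $bv$-norm (unlike the Alexiewicz norm) is invariant under $(u,v)\mapsto(x-u,y-v)$.

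The second issue is in the corner-limit part of (d). The asserted pointwise convergence $G_{x_n,y_n}(u,v)\to g(\epsilon_1\infty,\epsilon_2\infty)$ for \emph{every} $(u,v)\in\Rbar^2$ is false on the coordinate lines at infinity: if $\epsilon_1=+$ and $u=+\infty$, then $x_n-u=-\infty$ for every $n$, so the limit there involves $g(-\infty,\cdot)$. The pointwise limit function $G$ therefore equals the constant $g(\epsilon_1\infty,\epsilon_2\infty)$ only off at most two coordinate lines. Proposition~\ref{propconvergence} still gives $\intinf\intinf fG_{x_n,y_n}\to\intinf\intinf fG$, but you then need Proposition~\ref{propbvline} --- the integral $\intinf\intinf fG$ is unchanged when the multiplier is altered on a coordinate line --- to identify $\intinf\intinf fG$ with $g(\epsilon_1\infty,\epsilon_2\infty)\intinf\intinf f=g(\epsilon_1\infty,\epsilon_2\infty)F(\infty,\infty)$. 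This is precisely the step the paper inserts and your outline omits; with it, and with the constant in (c) repaired as above, your argument coincides with the paper's.
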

\begin{proof}
(a) The above definition and integration by parts show $f\ast g$ exists on $\R^2$.
(b) If $g\in\hkbv(\Rbar^2)$ then the function $(s,t)\mapsto g(x-s,y-t)$ is also
in $\hkbv(\Rbar^2)$.  Hence, $g\ast f$ exists in $\R^2$.  We can change variables as in 
Theorem~\ref{theoremchange}.
(c) This follows from Proposition~\ref{propalexinvariance} and the H\"older inequality
(Proposition~\ref{propholder}).
(d) To show continuity at $(x,y)\in\R^2$, let $(\xi,\eta)\in\R^2$.  Then
$$
\abs{f\ast g(x,y)-f\ast g(\xi,\eta)}  \leq  \norm{f(x-\cdot,y-\cdot)-f(\xi-\cdot,\eta-\cdot)}\norm{g}_{bv}.
$$
This last expression tends to $0$ as
$(\xi,\eta)\to (x,y)$ by continuity in the Alexiewicz norm 
(Proposition~\ref{propalexinvariance}).   It is clear from the proof of Proposition~\ref{propconvergence}
that the convergence theorem applies for limits of two  continuous variables.  We can then take
limits as $x$ and $y$ tend to $\infty$ or $-\infty$
under the integral signs of $g\ast f$.  Note that
$$
g\ast f(x,y)=\intinf\intinf f(s,t)g(x-s,y-t)\,dt\,ds.
$$
And,
$$
\lim_{\substack{x\to\infty\\y\to\infty}}g(x-s,y-t)=\left\{\begin{array}{cl}
g(\infty,\infty), & (s,t)\not=(\infty,\infty)\\
g(-\infty,-\infty), & (s,t)=(\infty,\infty).
\end{array}
\right.
$$
As per Proposition~\ref{propbvline} we can ignore the value of the integrand in
$g\ast f(x,y)$ on two coordinate lines.  Hence, the limit of $f\ast g(x,y)$ as $x,y
\to\infty$ gives $F(\infty,\infty)g(\infty,\infty)$.  Similarly, for the other
cases. 
This also shows $f\ast g\in C(\Rbar^2)$.
Part (d) can also be proved with integration by parts.
(e) To show $g\ast h\in\hkbv(\Rbar^2)$ let $(a_i,b_i)\times (c_i,d_i)$ be disjoint intervals in $\Rbar^2$.
By dominated convergence and the Fubini--Tonelli theorem we have
\begin{align*}
&\sum\abs{g\ast h(a_i,c_i)+g\ast h(b_i,d_i)-g\ast h(a_i,d_i)-g\ast h(b_i, c_i)}\\
&\leq \intinf\intinf\sum\abs{g(a_i-x,c_i-y)
+g(b_i-x,d_i-y)-g(a_i-x,d_i-y)\\
&\qquad -g(b_i-x, c_i-y)}\abs{h(x,y)}\,dy\,dx.
\end{align*}
From this it follows that $V_{12}g\ast h\leq V_{12}g\norm{h}_1$.  Similarly,
$\norm{V_1g\ast h}_\infty\leq \norm{V_1g}_\infty\norm{h}_1$ and
$\norm{V_2g\ast h}_\infty\leq \norm{V_2g}_\infty\norm{h}_1$.  Also, 
$\norm{g\ast h}_\infty\leq\norm{g}_\infty\norm{h}_1$.  Hence, $g\ast h\in\hkbv(\Rbar^2)$.
Part (d) now shows $f\ast (g\ast h)\in C(\Rbar^2)$.  
To show $f\ast (g\ast h)=(f\ast g)\ast h$ requires
a change in order of integration and this is justified by Proposition~\ref{propfubini2}.
\end{proof}
When $g\in L^1(\R^2)$ the 
convolution is not directly defined as above when $g\in\hkbv(\Rbar^2)$.  However, the density
of $L^1(\R^2)$ in $\alextwo$ (Proposition~\ref{propdense}) lets us define the convolution
using a sequence of $L^1$ functions.
\begin{defn}\label{defnconvolutionacL1}
Let $f\in\alextwo$.  Let $\{f_n\}\subset L^1(\R^2)$ such that $\lim_{n\to\infty}
\norm{f_n-f}=0$.  For $g\in\hkbv(\Rbar^2)$ define $f\ast g$ as the unique distribution
in $\alextwo$ such that $\lim_{n\to\infty}\norm{f_n\ast g - f\ast g}=0$.
\end{defn}

To show this makes sense, let $\{f_n\}$, $f$ and $g$ be as in the definition.  Let
$x,y\in\R$.  Then, using the Fubini--Tonelli theorem,
$$
\int_{-\infty}^x\int_{-\infty}^y f_n\ast g(s,t)\,dt\,ds =
\intinf\intinf g(\xi,\eta)\int_{-\infty}^x\int_{-\infty}^y f_n(s-\xi,t-\eta)\,dt\,ds\,d\eta\,d\xi.
$$
It follows that $\norm{f_n\ast g}\leq \norm{f_n}\norm{g}_1$.  Hence, $\{f_n\ast g\}$ is a Cauchy
sequence in $\alextwo$ and therefore converges to a unique element of $\alextwo$.  This also shows
that $f\ast g$ is independent of the defining sequence $\{f_n\}$.

\begin{theorem}\label{theoremconvolutionacL1}
Let $f\in\alextwo$ and $g\in L^1(\R^2)$.  Define $f\ast g$ as in 
Definition~\ref{defnconvolutionacL1}.  Then (a) $f\ast g\in\alextwo$ and $\norm{f\ast g}\leq \norm{f} \norm{g}_1$.
(b) Let $h\in L^1(\R^2)$.  Then $(f\ast g)\ast h=f\ast (g\ast h)\in\alextwo$.
(c) Define $g_r(x,y)=r^{-2}g(r^{-1}x,r^{-1}y)$ for $r>0$.
Let $A=\intinf\intinf g_r(x,y)\,dy\,dx=\intinf \intinf g$.  Then $\norm{f\ast g_r-Af}\to 0$
as $r\to 0^+$.
\end{theorem}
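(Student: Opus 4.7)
The plan is to reduce each part to a classical statement about $L^1(\R^2)$ functions by exploiting a defining sequence $\{f_n\}\subset L^1(\R^2)$ with $\norm{f_n-f}\to 0$ as in Definition~\ref{defnconvolutionacL1}, then take limits in the Alexiewicz norm. Part (a) is essentially immediate: the paragraph just after Definition~\ref{defnconvolutionacL1} already establishes $\norm{f_n\ast g}\leq\norm{f_n}\norm{g}_1$ by Fubini--Tonelli, hence $\{f_n\ast g\}$ is Cauchy in the complete space $\alextwo$. Its limit is $f\ast g\in\alextwo$, and passage to the limit in the inequality gives $\norm{f\ast g}\leq\norm{f}\norm{g}_1$.

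For part (b), I would invoke Young's inequality on $L^1(\R^2)$ (a Banach algebra under convolution), so $g\ast h\in L^1(\R^2)$ with $\norm{g\ast h}_1\leq\norm{g}_1\norm{h}_1$, and classical associativity gives $(f_n\ast g)\ast h=f_n\ast(g\ast h)$ for every $n$. Now apply part (a) on both sides. Since $f_n\ast g\in L^1(\R^2)$ tends to $f\ast g$ in $\alextwo$, the sequence $(f_n\ast g)\ast h$ serves as a defining sequence for $(f\ast g)\ast h$, with error bounded by $\norm{f_n\ast g-f\ast g}\norm{h}_1$; likewise $f_n\ast(g\ast h)$ is a defining sequence for $f\ast(g\ast h)$, with error bounded by $\norm{f_n-f}\norm{g\ast h}_1$. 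Both sides therefore share the same limit in $\alextwo$, proving the identity.

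For part (c), first observe via the substitution $u=x/r$, $v=y/r$ that $\norm{g_r}_1=\norm{g}_1$, independent of $r$. The classical theorem on approximations to the identity in $L^1$ yields, for each fixed $n$, $\norm{f_n\ast g_r-Af_n}_1\to 0$ as $r\to 0^+$, hence convergence in the Alexiewicz norm as well. An $\epsilon/3$ decomposition then finishes the proof:
\begin{align*}
\norm{f\ast g_r-Af}&\leq\norm{(f-f_n)\ast g_r}+\norm{f_n\ast g_r-Af_n}+\abs{A}\norm{f_n-f}\\
&\leq\norm{f-f_n}\norm{g}_1+\norm{f_n\ast g_r-Af_n}+\abs{A}\norm{f_n-f}.
\end{align*}
Choose $n$ so that the first and third summands together lie below $2\epsilon/3$; then, with $n$ fixed, pick $r$ small enough to bound the middle summand by $\epsilon/3$.

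The only place the argument could plausibly fail is the scale-invariance $\norm{g_r}_1=\norm{g}_1$ in part (c): without this uniform-in-$r$ bound, the rate of decay of the first summand in the $\epsilon/3$ decomposition would depend on $r$, and the argument could not decouple the choice of $n$ from the choice of $r$. Parts (a) and (b) are bookkeeping in terms of the Banach-algebra structure of $L^1(\R^2)$ and the defining completion, and both the norm estimate and the associativity pass routinely from $L^1$ to $\alextwo$.
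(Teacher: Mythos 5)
Your proposal is correct and follows the same skeleton as the paper's proof: reduce to a defining sequence $\{f_n\}\subset L^1(\R^2)$, pass the norm inequality and the $L^1$ associativity to the limit for (a) and (b), and handle (c) by an approximation argument in $n$ and $r$. The only genuine difference is in part (c): the paper proves $\norm{f_n\ast g_r-Af_n}\to 0$ directly in the Alexiewicz norm, by interchanging the order of integration and bounding the difference by $\intinf\intinf\norm{f_n(\cdot-r\xi,\cdot-r\eta)-f_n}\,\abs{g(\xi,\eta)}\,d\eta\,d\xi$, then invoking continuity of translations (Proposition~\ref{propalexinvariance}) and dominated convergence; you instead quote the classical $L^1$ approximate-identity theorem for $f_n$ and pass to the Alexiewicz norm via $\norm{\cdot}\leq\norm{\cdot}_1$. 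Both are valid; your route is shorter because it reuses a textbook result, while the paper's is self-contained in the Alexiewicz norm and reuses machinery (translation continuity) already established. Your explicit $\epsilon/3$ decomposition, with the uniform-in-$r$ bound $\norm{(f-f_n)\ast g_r}\leq\norm{f-f_n}\norm{g}_1$ coming from $\norm{g_r}_1=\norm{g}_1$ and part (a), actually makes rigorous the step the paper compresses into ``it suffices to prove that $\norm{f_n\ast g_r-Af_n}\to 0$,'' so no gap there.
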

\begin{proof}
Let $\{f_n\}\subset L^1(\R^2)$ such that $\norm{f_n-f}\to 0$.
(a) We have $\norm{f_n}\to\norm{f}$ and the inequality
$$
\norm{f_n\ast g}-\norm{f\ast g-f_n\ast g}\leq \norm{f\ast g}\leq \norm{f_n\ast g}+\norm{f\ast g-f_n\ast g}.
$$
Hence,
$$
\norm{f\ast g}=\lim_{n\to\infty}\norm{f_n\ast g}\leq \lim_{n\to\infty}\norm{f_n}\norm{g}_1=\norm{f}\norm{g}_1.
$$
(b) From the $L^1$ theory of convolutions it is known that $g\ast h\in L^1(\R^2)$.  For example,
\cite{folland}.  Then, by (a), $f\ast (g\ast h)\in\alextwo$.  And, $f\ast g\in\alextwo$ so by (a),
$(f\ast g)\ast h\in\alextwo$.
Hence, both $f\ast(g\ast h)$ and $(f\ast g)\ast h$ exist in $\alextwo$.  To show they are equal
note that convolutions are associative in $L^1(\R^2)$.  Therefore,
$$
0=\lim_{n\to\infty}\norm{f_n\ast(g\ast h)-f\ast(g\ast h)}=\lim_{n\to\infty}\norm{(f_n\ast g)\ast h-f\ast(g\ast h)}.
$$
And, $f_n\ast g\in\alextwo$ such that $\norm{f_n\ast g- f\ast g}\to 0$.
Therefore, $\norm{(f_n\ast g)\ast h-(f\ast g)\ast h}\to 0$.
It now follows that $f\ast(g\ast h)=(f\ast g)\ast h$.
(c) If suffices to prove that $\norm{f_n\ast g_r -Af_n}\to 0$.  Accordingly,
$$
f_n\ast g_r(s,t)-Af_n(s,t) 
=\intinf\intinf\left[ f_n(s-r\xi,t-r\eta)-f_n(s,t)\right]g(\xi,\eta)\,d\eta\,d\xi.
$$
We can change variables by Theorem~\ref{theoremchange}.  To find the Alexiewicz norm,
the above expression is integrated from $s=-\infty$ to $x$ and from $t=-\infty$ to $y$,
for some $(x,y)\in\R^2$.  In the integral with $f_n(s-r\xi,t-r\eta)$ the order of integration can be changed 
due to the Fubini--Tonelli theorem.  In the integral with $f_n(s,t)$ the order of integration can be changed
since the $(s,t)$ variables separate from the $(\xi,\eta)$ variables.
This then gives
\begin{align*}
&\left|\int_{-\infty}^x\int_{-\infty}^y\left[f_n\ast g_r(s,t)-Af_n(s,t)\right]\,dt\,ds\right|\\
&\leq \intinf\intinf\left|\int_{-\infty}^x\int_{-\infty}^y\left[f_n(s-r\xi,t-r\eta)-f_n(s,t)\right]
dt\,ds\right|
\abs{g(\xi,\eta)}\,d\eta\,d\xi\\
&\leq \intinf\intinf \norm{f_n(\cdot-r\xi,\cdot-r\eta)-f_n(\cdot,\cdot)}\abs{g(\xi,\eta)}\,d\eta\,d\xi.
\end{align*}
Dominated convergence and continuity in the Alexiewicz norm (Proposition~\ref{propalexinvariance})
allows us to take the limit $n\to\infty$ under the integral sign.
\end{proof}

\begin{example}
Part (c) of this theorem is useful for showing the solution of a differential equation takes on
initial or boundary values in the Alexiewicz norm.  For example, if 
$\Phi_z(x,y)=z(x^2+y^2+z^2)^{-3/2}/(2\pi)$ is the half-space Poisson kernel from 
Proposition~\ref{propseparable}, then $\lim_{z\to 0^+}\norm{f\ast\Phi_z-f}=0$.  Then the convolution
$u(x,y,z)=f\ast\Phi_z(x,y)$ satisfies the boundary condition $u=f$ in the Alexiewicz norm when 
$z\to 0^+$.  The partial derivatives of $\Phi_z$ are of bounded variation.  Proposition~\ref{propconvergence}
can then be used to show we can differentiate under the integrals 
and this shows
$u$ is harmonic in the half-space $(x,y,z)\in\R^2\times(0,\infty)$.
\end{example}

\section{The integral in $\Rbar^n$}\label{sectionRbarn}
Here we will briefly sketch out the differences between the integral in $\R^2$ and in $\R^n$.

We now let $D_n=\partial_1\partial_2\ldots\partial_n$ and define
\begin{eqnarray*}
\balexn & = & \{F\in C(\Rbar^n)\mid
F(x)= 0 \text{ if } x_i=-\infty \text{ for some } 1\leq i\leq n\}\\
\alexn & = & \{f\in \Dpn\mid f= D_nF \text{ for some } F\in\balexn\}.
\end{eqnarray*}
As before, primitives are unique.  It is convenient to use matrix notation to
define the integral over interval
$I=[a_{21},a_{11}]\times[a_{22},a_{12}]\times\ldots\times[a_{2n},a_{1n}]$ by
$$
\int_If = \int_{a_{21}}^{a_{11}}\int_{a_{22}}^{a_{12}}\cdots\int_{a_{2n}}^{a_{1n}}f=(-1)^n\!\!\!\sum_{i_1,\ldots,i_n\in\{1,2\}}(-1)^{i_1+\ldots+i_n}F(a_{i_11},a_{{i_2}2},
\ldots,a_{{i_n}n}).
$$
There are $2^n$ summands.
This formula can be proved with induction by writing iterated integrals.

In the proof of Proposition~\ref{propseparable} the half-space Poisson kernel in $\R^n$ is given in 
\cite[p.~145]{axler}.

Hardy--Krause variation in $\Rbar^n$ is defined in Definition~$6.5.2$ in \cite{lee}.
The integration by parts formula, due to J.~Kurzweil, is given in \cite{lee}, Theorem~6.5.9.
See also \cite{young}.  Various forms of the second mean value theorem are given
in \cite{lee} and \cite{young}.

J.~Mawhin has listed the coordinate transformations that map intervals to intervals and this
will give a change of variables theorem as in Theorem~\ref{theoremchange}.
See \cite{mawhin}.

\end{document}